\newtheorem{thm}{Theorem}
\newtheorem{cor}{Corollary}
\newtheorem{lem}{Lemma}[section]
\newtheorem{prop}[lem]{Proposition}
\newtheorem{rem}[lem]{Remark}
\newcommand{\nablab}{\overline{\nabla}}
\newcommand{\lgra}{\longrightarrow}
\newcommand{\iid}{\mathrm{Id}\,}
\newcommand{\trace}{\mathrm{tr\,}}
\newcommand{\lto}{\ensuremath{\longrightarrow}}
\newcommand{\C}{\mathbb{C}}
\newcommand{\HH}{\mathbb{H}}
\newcommand{\HC}{\mathbb{H}^{\mathbb{C}}}
\newcommand{\R}{\mathbb{R}}
\newcommand{\Z}{\mathbb{Z}}
\newcommand{\pre}{\Re e}
\newcommand{\tch}{\hat}
\newcommand{\bigtch}{\widehat}
\newcommand{\function}[5]
{\begin{eqnarray*}\begin{array}{r@{}ccl}
 #1\;\colon\;  & #2 &\lto & #3 \\[.05cm]
  & #4 &\longmapsto  & #5
\end{array}\end{eqnarray*}
}
\newcommand{\beqt}{\begin{equation}}  \newcommand{\eeqt}{\end{equation}}
\newcommand{\bal}{\begin{align}}      \newcommand{\eal}{\end{align}}
\newcommand{\ba}{\begin{array}}      \newcommand{\ea}{\end{array}}
\newcommand{\bc}{\begin{center}}     \newcommand{\ec}{\end{center}}
\newcommand{\be}{\begin{enumerate}}  \newcommand{\ee}{\end{enumerate}}
\newcommand{\beq}{\begin{eqnarray}}  \newcommand{\eeq}{\end{eqnarray}}
\newcommand{\beQ}{\begin{eqnarray*}} \newcommand{\eeQ}{\end{eqnarray*}}
\newcommand{\bi}{\begin{itemize}}    \newcommand{\ei}{\end{itemize}}
\newcommand{\bt}{\begin{tabular}}    \newcommand{\et}{\end{tabular}}
\newcommand{\finpreuve}{\hfill\square\\}
\newcommand{\1}{\textit{1}}
\begin{document}

\title{On the spinorial representation of spacelike surfaces into 4-dimensional Minkowski space}

\author{Pierre Bayard}

\address{Pierre Bayard, Instituto de F\'isica y Matem\'aticas. U.M.S.N.H. Ciudad Universitaria. CP. 58040 Morelia, Michoac\'an, M\'exico}
  
\email{bayard@ifm.umich.mx}

\subjclass[2000]{53C27, 53C40, 53C80.}

\date{}

\keywords{Complex Quaternions, Dirac Operator, Isometric Immersions, Spacelike surfaces, Weierstrass Representation.}

\begin{abstract}
We prove that an isometric immersion of a simply connected Riemannian surface $M$ in four-dimensional Minkowski space, with given normal bundle $E$ and given mean curvature vector $\vec H\in\Gamma(E),$  is equivalent to a normalized spinor field $\varphi\in\Gamma(\Sigma E\otimes\Sigma M)$ solution of a Dirac equation $D\varphi=\vec H\cdot\varphi$ on the surface. Using the immersion of the Minkowski space into the complex quaternions, we also obtain a representation of the immersion in terms of the spinor field. We then use these results to describe the flat spacelike surfaces with flat normal bundle and regular Gauss map in four-dimensional Minkowski space, and also the flat surfaces in three-dimensional hyperbolic space, giving spinorial proofs of results by J.A. G\'alvez, A. Mart\'{\i}nez and F. Mil\'an.
\end{abstract}

\maketitle


\markboth{PIERRE BAYARD}{}
\section*{Introduction}
A conformal immersion of a surface in $\R^3,$ with mean curvature $H,$ may be represented by a spinor field $\varphi$ solution of the Dirac equation $D\varphi=H\varphi$ on the surface; this fact generalizes the classical Weierstrass representation of minimal surfaces, and has been studied by many authors; we refer to \cite{KS,Ta1,Fr}, and the references therein. The representation of surfaces using spinors has been subsequently extended to other geometric contexts, especially to the Riemannian and semi-Riemannian space forms, in dimension three \cite{Mo,La,Ro,LR}, and in dimension four \cite{Ko,KL, Va,Ta2,BLR}; close to our subject, the paper \cite{Va} is concerned with the spinorial representation of conformal immersions of surfaces into 4-dimensional pseudo-Riemannian manifolds, and in particular into 4-dimensional Minkowski space.

In the present paper, we are interested in the spinorial description of the isometric immersions of a Riemannian surface into 4-dimensional Minkowski space $\R^{1,3},$ with given normal bundle and given mean curvature vector (this is the approach followed in \cite{BLR}), and in its applications to the geometry of spacelike surfaces in $\R^{1,3}$, especially to the description of the flat surfaces: using such a representation we will obtain spinorial proofs of results by J.A. G\'alvez, A. Mart\'{\i}nez and F. Mil\'an concerning the representation of the flat surfaces with flat normal bundle in $\R^{1,3}$ \cite{GMM} and also in three-dimensional hyperbolic space $\HH^3$ \cite{GMM1,GMM}. 
 
Let $(M,g)$ be a Riemannian surface. We suppose that $E$ is a vector bundle of rank $2$ on $M,$ equipped with a Lorentzian metric, i.e. a metric of signature $(1,1)$ on each fiber,  a compatible connection, and an orientation (in space, and in time); we assume moreover that spin structures are given on $E$ and on $TM,$ and we define $\Sigma:=\Sigma E\otimes \Sigma M,$ the tensor product of the corresponding bundles of spinors. Let $\HC$ be the space of complex quaternions, defined by
$$\HC:=\{q_0\1+q_1I+q_2J+q_3K,\ q_0,q_1,q_2,q_3\in\C\}$$
where $I,J$ and $K$ are such that
$$I^2=J^2=K^2=-\1,\ IJ=-JI=K.$$
We will see Section \ref{preliminaries} that two natural bilinear maps
$$H:\Sigma\times\Sigma\rightarrow\C\hspace{1cm} \mbox{and}\hspace{1cm}\langle\langle.,.\rangle\rangle:\Sigma\times\Sigma\rightarrow\HC$$
are defined on $\Sigma.$ The main result of the paper is the following:
\begin{thm}\label{th main result}
Suppose that $M$ is moreover simply connected, and let $\vec{H}$ be a section of $E$. The following statements are equivalent.
\begin{enumerate}
\item There exists a spinor field $\varphi$ of $\Gamma(\Sigma)$ with $H(\varphi,\varphi)=1$ solution of the Dirac equation
$$D\varphi=\vec{H}\cdot\varphi.$$
\item There exists a spinor field $\varphi\in\Gamma(\Sigma)$ with $H(\varphi,\varphi)=1$ solution of 
$$\nabla_X\varphi=-\frac{1}{2}\sum_{j=2,3}e_j\cdot B(X,e_j) \cdot\varphi, $$
where $B:TM\times TM\lgra E$ is bilinear with $\frac{1}{2}\trace(B)=\vec{H},$ and where $(e_2,e_3)$ is an orthonormal basis of $TM$ at every point.
\item There exists a (spacelike) isometric immersion $F$ of $(M,g)$ into $\R^{1,3}$ with normal bundle $E$ and mean curvature vector $\vec{H}$.
\end{enumerate}
Moreover, $F=\int\xi,$ where $\xi$ is the closed 1-form on $M$ with values in $\R^{1,3}$ defined by
$$\xi(X):=\langle\langle X\cdot\varphi,\varphi\rangle\rangle\hspace{.5cm}\in\hspace{.5cm}\R^{1,3}\subset\HC$$
for all $X\in TM.$ 
\end{thm}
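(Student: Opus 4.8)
The plan is to prove the cycle of implications $(2)\Rightarrow(1)$, $(1)\Rightarrow(2)$, $(2)\Rightarrow(3)$ and $(3)\Rightarrow(2)$, which makes the three statements equivalent, and to obtain the formula $F=\int\xi$ as part of the step $(2)\Rightarrow(3)$. The implication $(2)\Rightarrow(1)$ is the one routine step: applying $D=\sum_{j}e_j\cdot\nabla_{e_j}$ to $\varphi$ and inserting the equation of $(2)$ gives $D\varphi=-\tfrac12\sum_{i,j}e_i\cdot e_j\cdot B(e_i,e_j)\cdot\varphi$; using $e_i\cdot e_j+e_j\cdot e_i=-2\delta_{ij}$ (for $i,j\in\{2,3\}$) together with the symmetry of $B$, the off-diagonal terms cancel in pairs and the diagonal terms sum to $\tfrac12\trace(B)\cdot\varphi=\vec{H}\cdot\varphi$. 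One also notes that $H(\varphi,\varphi)$ is constant along a solution of $(2)$ — this follows by differentiating and using the compatibility of $H$ with $\nabla$ and with Clifford multiplication — so the normalization $H(\varphi,\varphi)=1$ is consistent with the equation.

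The implication $(1)\Rightarrow(2)$ is the analytically delicate point, and I expect it to be the main obstacle. The idea is that the condition $H(\varphi,\varphi)=1$ forces $\varphi$ to be nowhere zero and, using the algebraic properties of $H$, that $\varphi$ together with suitable Clifford multiples of it (by a spacelike vector of $E$, by a vector of $TM$, and by their product) forms a frame of $\Sigma$ at each point. Hence $\nabla_X\varphi=\eta(X)\cdot\varphi$ for a $1$-form $\eta$ valued in an appropriate Clifford-degree piece. The constraint obtained by differentiating $H(\varphi,\varphi)=1$, together with the compatibility of $\nabla$ with Clifford multiplication and with $H$, restricts $\eta(X)\cdot\varphi$ to the form $-\tfrac12\sum_{j}e_j\cdot B(X,e_j)\cdot\varphi$ for a unique $E$-valued bilinear map $B$ on $TM$; the symmetry of $B$ is then extracted by comparing $\nabla_{e_2}\varphi$ and $\nabla_{e_3}\varphi$ and using that $\dim M=2$ and $\mathrm{rank}\,E=2$. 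Feeding this expression back into $D\varphi=\vec{H}\cdot\varphi$ and reading off the diagonal part as in the previous step gives $\tfrac12\trace(B)=\vec{H}$. The two points needing genuine care are the pointwise frame property in the present Lorentzian signature and the symmetry of $B$; both are where the low dimension is essential, since in higher dimensions a solution of the Dirac equation alone does not satisfy such a first-order system.

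For $(2)\Rightarrow(3)$ I would take the formula as the definition of the candidate immersion. Set $\xi(X):=\langle\langle X\cdot\varphi,\varphi\rangle\rangle\in\HC$. First, using the algebraic properties of $\langle\langle.,.\rangle\rangle$ and the normalization $H(\varphi,\varphi)=1$, one checks that $\xi(X)$ lies in the real subspace $\R^{1,3}\subset\HC$ and that $|\xi(X)|^2_{1,3}=g(X,X)$, so that $\xi$ restricts to an isometry of $T_pM$ onto a spacelike plane of $\R^{1,3}$ at each point. Next one checks that $\xi$ is closed: differentiating $\xi(Y)$ in the direction $X$, using the compatibility of $\langle\langle.,.\rangle\rangle$ and of Clifford multiplication with $\nabla$, and substituting the equation of $(2)$, the terms in $\nabla_XY-\nabla_YX-[X,Y]$ vanish by torsion-freeness, while the terms carrying $B$ cancel because $B$ is symmetric and by the symmetry of $\langle\langle.,.\rangle\rangle$ under Clifford multiplication. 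Since $M$ is simply connected, $\xi=dF$ for some map $F:M\to\R^{1,3}$ (unique up to a constant), which by the above is a spacelike isometric immersion. Finally one identifies its extrinsic data: the normal plane of $F$ at $p$, being the $\R^{1,3}$-orthogonal complement of $\xi(T_pM)$, is canonically isometric to $E_p$ compatibly with the connections, and differentiating $dF=\xi$ once more and comparing with the equation of $(2)$ shows that the second fundamental form of $F$ is precisely $B$; hence the mean curvature vector of $F$ is $\tfrac12\trace(B)=\vec{H}$. This also establishes the ``Moreover'' statement.

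Finally, $(3)\Rightarrow(2)$ closes the cycle and is essentially a restatement of the spinorial Gauss formula. Given a spacelike isometric immersion $F$ with normal bundle $E$ and mean curvature $\vec{H}$, the restriction to $M$ of the spinor bundle of $\R^{1,3}$ is identified with $\Sigma=\Sigma E\otimes\Sigma M$ (an identification set up in the preliminary section); a parallel spinor of $\R^{1,3}$ restricts to a section $\varphi$ of $\Sigma$, and the Gauss formula — which expresses the flat spin connection of $\R^{1,3}$ along $M$ in terms of $\nabla$ on $\Sigma$ and Clifford multiplication by the second fundamental form $B$ of $F$ — is exactly the equation of $(2)$, with $\tfrac12\trace(B)=\vec{H}$; a constant rescaling of the parallel spinor arranges $H(\varphi,\varphi)=1$. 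So the weight of the proof lies in the purely spinorial equivalence $(1)\Leftrightarrow(2)$, and within it in the implication $(1)\Rightarrow(2)$.
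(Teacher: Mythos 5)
Your overall architecture matches the paper's: $(2)\Rightarrow(1)$ by tracing the Dirac operator, $(1)\Rightarrow(2)$ as the hard analytic step (the paper's Lemma \ref{lem1}, proved in Section \ref{secpflem}), $(2)\Rightarrow(3)$ by checking that $\xi(X)=\langle\langle X\cdot\varphi,\varphi\rangle\rangle$ is $\R^{1,3}$-valued, closed, and integrates to an isometric immersion carrying $E$ to the normal bundle with the right second fundamental form (Proposition \ref{prop fundamental xi} and Theorem \ref{theorem second integration}), and $(3)\Rightarrow(2)$ by the spinorial Gauss formula applied to the restriction of a parallel spinor. That is exactly the route taken in the paper, and your identification of $(1)\Rightarrow(2)$ as the real content is correct.

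The one place where your sketch diverges from the paper's argument, and where I would flag a genuine risk of a gap, is the mechanism inside $(1)\Rightarrow(2)$. You propose to start from the ``frame'' property: since $H(\varphi,\varphi)=1$, $\varphi$ is a unit for the right $\HC$-module structure on $\Sigma$, so $\nabla_X\varphi=T(X)\cdot\varphi$ for a unique $T(X)\in Cl_0(E\oplus TM)$. But $Cl_0(E\oplus TM)$ has real dimension $8$ (degrees $0,\,2,\,4$), whereas the desired $\eta(X)$ lives in the $4$-dimensional piece $TM\cdot E\subset Cl_2$. The single complex constraint from differentiating $H(\varphi,\varphi)=1$, together with the Dirac equation, is not obviously enough to eliminate the unwanted degree-$0$, degree-$4$, and $TM\cdot TM$, $E\cdot E$ components without further algebraic work; as written, your ``then the $H$-constraint restricts $\eta(X)\cdot\varphi$ to the form $-\tfrac12\sum_j e_j\cdot B(X,e_j)\cdot\varphi$'' is an assertion, not an argument. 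The paper avoids this indeterminacy altogether by splitting $\varphi=\varphi^++\varphi^-$ with respect to $\Sigma E\otimes\Sigma^{+}M\oplus\Sigma E\otimes\Sigma^{-}M$, choosing a component with $H(\varphi^{\pm},\varphi^{\pm})\neq0$, and building an explicit $H$-null dual basis of $\Sigma E\otimes\Sigma^{+}M$ out of $e_2\cdot e\cdot\varphi^-$ and $e_2\cdot e'\cdot\varphi^-$, where $e=\tfrac{\sqrt2}{2}(e_0-e_1)$ and $e'=\tfrac{\sqrt2}{2}(e_0+e_1)$ are the null directions of $E$. Expanding $\nabla_X\varphi^+$ in that basis produces $\eta(X)$ \emph{already} in $TM\cdot E$ by construction; the Dirac equation and the $H$-constraint are then used only to show the same $\eta$ governs $\nabla_X\varphi^-$ (the extra $\mu(X)\,e_0\cdot e_1$ term is killed by the Dirac equation). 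The symmetry of $B$ then follows from $e_2\cdot\eta(e_2)+e_3\cdot\eta(e_3)=\vec H$, not from comparing $\nabla_{e_2}\varphi$ and $\nabla_{e_3}\varphi$ as you suggest. So you have the right skeleton, but to make $(1)\Rightarrow(2)$ watertight you should replace the coarse $Cl_0$-framing with the chirality splitting and the null-vector construction, which is precisely what the Lorentzian signature of $E$ buys you here.

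One further small remark: the paper also supplies the integrability content separately in Theorem \ref{thm1} and Lemma \ref{lem calculs courbure}, namely that the $B$ extracted from $\varphi$ satisfies Gauss, Codazzi and Ricci. In your proposal this is absorbed into the direct proof of $(2)\Rightarrow(3)$, which is fine and mirrors the paper's Section \ref{sec representation}; but if you ever want to state Corollary \ref{corollary1 theorem} (the fundamental theorem) as a byproduct, you will want that intermediate statement explicitly.
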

We refer to Section \ref{preliminaries} for the definitions of the Clifford product $"\cdot"$ and of the Dirac operator $D$ acting on $\Gamma(\Sigma),$ and for the immersion of the Minkowski space $\R^{1,3}$ into $\HC.$

We then give various applications of this result. We first give a spinorial proof of the fundamental theorem:
\begin{cor}\label{corollary1 theorem}
If, additionally to the hypotheses in the theorem above, a bilinear map $B:TM\times TM\rightarrow E$ satisfying the Gauss, Codazzi and Ricci equations is given, then there is an isometric immersion of the surface in $\R^{1,3}$ with normal bundle $E$ and second fundamental form $B;$ the immersion is moreover unique up to the action of the group of Lorentzian motions in $\R^{1,3}.$
\end{cor}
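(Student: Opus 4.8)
The plan is to deduce the existence statement from the equivalence (2)$\Leftrightarrow$(3) of Theorem~\ref{th main result} by producing a suitable spinor field, and to obtain the uniqueness by a standard moving-frame argument. Set $\vec H:=\frac12\trace(B)$ and introduce on $\Sigma$ the modified connection
$$\nablat_X\varphi:=\nabla_X\varphi+\frac12\sum_{j=2,3}e_j\cdot B(X,e_j)\cdot\varphi,\qquad X\in TM,$$
whose parallel sections are exactly the solutions of the equation in item (2). The crucial point is that, under the Gauss, Codazzi and Ricci equations, $\nablat$ is flat. To see this, one expands $R^{\nablat}(X,Y)\varphi$ into three groups of terms: the curvature $R^{\Sigma}(X,Y)\varphi$ of the spinorial connection, which by the spin representation is one half of the Clifford action of the curvatures of $TM$ and of $E$; the terms linear in $\nabla B$, namely $\frac12\sum_{j}\big((\nabla_X B)(Y,e_j)-(\nabla_Y B)(X,e_j)\big)\cdot e_j\cdot\varphi$; and the terms quadratic in $B$. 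Reorganizing with the Clifford relations, the $\nabla B$ terms cancel by the Codazzi equation, the tangential part of the remainder vanishes by the Gauss equation, and its normal part by the Ricci equation, so that $R^{\nablat}=0$; this is the spinorial counterpart of the structure equations, and part of it may already be available from the proof of Theorem~\ref{th main result}.

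Since $M$ is simply connected and $\nablat$ is flat, each $\varphi_0\in\Sigma_{p_0}$ extends to a unique $\nablat$-parallel section $\varphi\in\Gamma(\Sigma)$. As the spinorial connection is compatible with $H$ and Clifford multiplication by $e_j\cdot B(X,e_j)$ is skew-symmetric for $H$ (see Section~\ref{preliminaries}), the complex number $H(\varphi,\varphi)$ is constant on $M$; choosing $\varphi_0$ with $H(\varphi_0,\varphi_0)=1$, which is possible by the properties of $H$, we obtain $H(\varphi,\varphi)\equiv1$. Then $\varphi$ satisfies item (2) of Theorem~\ref{th main result}, hence there is a spacelike isometric immersion $F=\int\xi$ of $(M,g)$ into $\R^{1,3}$ with normal bundle $E$; moreover, inspecting the construction in the implication (2)$\Rightarrow$(3) — where the Gauss and Weingarten formulae of $F$ are read off from the spinorial equation — shows that the second fundamental form of $F$ is precisely $B$, not only that its trace equals $2\vec H$.

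For the uniqueness, let $F_1,F_2\colon M\to\R^{1,3}$ be two immersions as in the statement. Using the identifications of their normal bundles with $E$ one gets, along $M$, adapted orthonormal frames $\mathcal F_i=(dF_i(e_2),dF_i(e_3),n^i_2,n^i_3)$ of $\R^{1,3}$; after composing $F_1$ with a suitable Lorentzian motion we may assume $F_1(p_0)=F_2(p_0)$ and $\mathcal F_1(p_0)=\mathcal F_2(p_0)$. Along any path from $p_0$, the pair $(F_i,\mathcal F_i)$ solves a first-order linear system whose coefficients are built from the Levi-Civita connection of $g$, the connection of $E$ and $B$, hence are the same for $i=1,2$; by uniqueness for such systems and connectedness of $M$ we get $F_1\equiv F_2$. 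I expect the main obstacle to be the curvature computation above: one must track the Clifford-algebra bookkeeping carefully so that the tangential, normal and $\nabla B$ components of $R^{\nablat}$ are matched, respectively, with the Gauss, Ricci and Codazzi equations.
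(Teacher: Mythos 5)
Your existence argument is essentially the paper's: you define the same modified connection $\nablat$, appeal to the curvature computation already carried out for Theorem~\ref{thm1} (Lemma~\ref{lem calculs courbure}) to conclude that the Gauss, Codazzi and Ricci equations make $\nablat$ flat, produce a global parallel section with $H(\varphi,\varphi)\equiv 1$ on the simply connected $M$, and then invoke the construction $F=\int\xi$ (Theorem~\ref{theorem second integration}) to read off an immersion whose second fundamental form is exactly $B$. That is precisely the content of Corollary~\ref{spinorial proof fundamental theorem}, and your extra remark that $H(\varphi,\varphi)$ is constant because $\eta(X)$ is a sum of products $e_j\cdot\nu$ with $e_j\perp\nu$ (so that $H(\eta(X)\cdot\varphi,\varphi)=0$) is a nice explicit justification of a point the paper leaves implicit.

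For the \emph{uniqueness}, however, you take a genuinely different route. The paper stays entirely inside the spinorial framework: a $\nablat$-parallel spinor with $H(\varphi,\varphi)=1$ is unique up to the right action of a constant $q\in Spin(1,3)$, the primitive $F=\int\xi$ is unique up to a constant in $\R^{1,3}$, and the paper observes that the right $Spin(1,3)$-action on $\varphi$ and the translation together realize exactly the group of Lorentzian motions. You instead revert to the classical Cartan moving-frame argument: both immersions give rise to $\R^{1,3}$-valued frames satisfying the same first-order Gauss--Weingarten system (coefficients determined by $g$, the $E$-connection and $B$), and uniqueness for linear ODE systems along paths, together with connectedness, gives $F_1\equiv F_2$ after a rigid normalization at a point. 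Both arguments are valid; the paper's has the advantage of staying within the spinorial calculus developed in Sections~\ref{sec fundamental eq}--\ref{sec representation} and making the $Spin(1,3)$-action transparent, while yours is more elementary and does not require knowing how the right $Spin(1,3)$-action on $\varphi$ translates into a Lorentz transformation of $\xi$.

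One small notational slip: with the paper's conventions, a positively oriented orthonormal frame of $E$ is $(e_0,e_1)$, so the normal vectors in your adapted frame $\mathcal F_i$ should be indexed $n^i_0,n^i_1$ rather than $n^i_2,n^i_3$. This does not affect the argument.
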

As a second application, we explain how Theorem \ref{th main result} permits to recover the spinorial characterizations of the isometric immersions in $\R^3,$ in $\HH^3$ \cite{Mo} and in $\R^{1,2}$ \cite{LR}; for immersions in $\R^{1,2}$ we obtain a characterization which is different to the characterization given in \cite{LR}, and we also give a representation formula (see Remark \ref{rmq R1,2} below).
 
The third application of the theorem above is the description of the flat surfaces with flat normal bundle and regular Gauss map in four-dimensional Minkowski space: these surfaces are generally described by two holomorphic functions and two smooth functions satisfying a condition of compatibility;  this is the main result of \cite{GMM}, that we prove here using spinors. We set here this result as follows:
\begin{cor}\label{corollary2 theorem}
Let $(U,z)$ be a simply connected domain in $\C,$ and consider
$$\theta_1=f_1dz\hspace{1cm}\mbox{and}\hspace{1cm}\theta_2=f_2dz$$
where $f_1,f_2:U\rightarrow\C$ are two holomorphic functions such that $f_1^2+f_2^2\neq 0.$ Suppose that $h_0,h_1:U\rightarrow\R$  are smooth functions such that
\begin{equation}\label{alpha function f h cor}
\alpha_1:=\frac{1}{f_1^2+f_2^2}\left(ih_0f_1+h_1f_2\right)\hspace{.5cm}\mbox{ and }\hspace{.5cm}\alpha_2:=\frac{1}{f_1^2+f_2^2}\left(ih_0f_2-h_1f_1\right),
\end{equation}
considered as real vector fields on $U,$ are independent at every point and satisfy $\left[\alpha_1,\alpha_2\right]=0.$  Then, if $g:U\rightarrow Spin(1,3)\subset\HC$ is a map solving 
\begin{equation}\label{eqn g1}
dg\ g^{-1}=\theta_1 J+\theta_2 K,
\end{equation}
and if we set
\begin{equation}\label{th def xi}
\xi:=g^{-1}(\omega_1J+\omega_2K)\hat{g}
\end{equation}
where $\omega_1,\omega_2:TU\rightarrow\R$ are the dual forms of $\alpha_1,\alpha_2\in\Gamma(TU)$ and where $\hat{g}$ stands for the map $g$ composed by the complex conjugation $\hat{}$ of the four coefficients in $\HC,$ the function $F=\int\xi$ defines an immersion $U\rightarrow\R^{1,3}$ with $K=K_N=0.$ Reciprocally, the immersions of $M$ simply connected such that $K=K_N=0,$ with regular Gauss map and whose osculating spaces are everywhere not degenerate, are of this form.
\end{cor}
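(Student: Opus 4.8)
The strategy is to decode the hypotheses as a concrete spinor field satisfying condition (2) of Theorem \ref{th main result}, and then to identify the representation formula $F=\int\langle\langle X\cdot\varphi,\varphi\rangle\rangle$ with the formula $F=\int\xi$ claimed here. First I would fix trivializations of $\Sigma E$ and $\Sigma M$ adapted to $z$, so that a section $\varphi\in\Gamma(\Sigma)$ is a vector-valued function on $U$ and the Clifford products by $e_2,e_3$ and by a normal frame of $E$ become multiplication by fixed elements of $\HC$ (this is essentially the identification $\Sigma\cong\HC$ used in Section \ref{preliminaries}). Under this identification the Dirac/Killing-type equation (2) of the theorem becomes a first-order linear system $\partial_{\bar z}\varphi=(\text{lower order})\varphi$, i.e. a $\bar\partial$-equation with a prescribed potential built from $B$ and the connection forms; the holomorphy of $f_1,f_2$ is exactly what makes the "holomorphic part" of this system consistent, while $h_0,h_1$ encode the remaining real data (the normal connection and the trace-free part of $B$ determining $\vec H$). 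Concretely, I expect that equation \eqref{eqn g1}, $dg\,g^{-1}=\theta_1 J+\theta_2 K$, is the integrability condition for the gauge $g$ that trivializes the Gauss connection, and that its solvability on the simply connected $U$ is guaranteed precisely by $d(\theta_1 J+\theta_2 K)=0$ together with the flatness $dg\,g^{-1}\wedge dg\,g^{-1}=0$, which follows from $J,K$ anticommuting and $\theta_i\wedge\theta_j=0$ for $1$-forms on a surface.

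Next I would check that the two vector fields $\alpha_1,\alpha_2$ in \eqref{alpha function f h cor} are the right objects: the condition $[\alpha_1,\alpha_2]=0$ says that $\omega_1\wedge\omega_2$-type data integrate, equivalently that $\omega_1 J+\omega_2 K$ is closed after the gauge transformation by $g$, so that $\xi=g^{-1}(\omega_1 J+\omega_2 K)\hat g$ is a closed $\R^{1,3}$-valued $1$-form and $F=\int\xi$ is well defined on the simply connected $U$. The independence of $\alpha_1,\alpha_2$ is what makes $\xi$ of maximal rank, hence $F$ an immersion, and the particular combinations in \eqref{alpha function f h cor}—with the denominator $f_1^2+f_2^2$ that is assumed nonzero—are exactly those that force the induced metric to be Riemannian (spacelike) and the curvature quantities $K$ and $K_N$ to vanish. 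I would verify $K=K_N=0$ either by a direct computation of the Gauss and Ricci equations from the structure equations $dg=(\theta_1 J+\theta_2 K)g$, or, more in the spirit of the paper, by exhibiting the associated $B$ and noting that the second fundamental form produced this way is "null" in the appropriate sense (its components relative to the frame built from $f_1,f_2$ have vanishing Gauss and normal curvatures by the algebraic identity $f_1^2+f_2^2\ne0$).

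For the converse, I would start from an immersion $F$ of simply connected $M$ with $K=K_N=0$, regular Gauss map, and nondegenerate osculating spaces, apply Theorem \ref{th main result} to get a spinor field $\varphi$ with $H(\varphi,\varphi)=1$ solving $D\varphi=\vec H\cdot\varphi$, and then show that flatness plus flat normal bundle let one integrate the connection to a map $g:M\to\mathrm{Spin}(1,3)$ with $\varphi=g\cdot\varphi_0$ for a constant model spinor $\varphi_0$; the hypothesis that the Gauss map is regular and the osculating spaces nondegenerate is what guarantees that the resulting $dg\,g^{-1}$ has the normal form $\theta_1 J+\theta_2 K$ with $f_1,f_2$ holomorphic and $f_1^2+f_2^2\ne0$, and then reading off $\omega_1,\omega_2$ from $\xi$ via \eqref{th def xi} recovers $h_0,h_1$ through \eqref{alpha function f h cor}. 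The main obstacle I anticipate is the bookkeeping in matching conventions: choosing the trivializations of $\Sigma E$ and $\Sigma M$ so that the abstract spinorial equation (2) of the theorem turns into exactly equation \eqref{eqn g1} with the stated placement of $J$ and $K$, and so that the bilinear map $\langle\langle\cdot,\cdot\rangle\rangle$ evaluated on $X\cdot\varphi$ reproduces $g^{-1}(\omega_1 J+\omega_2 K)\hat g$ with the correct $\hat{\phantom{g}}$-conjugation; once the dictionary is set up, the holomorphicity of $f_1,f_2$, the closedness from $[\alpha_1,\alpha_2]=0$, and the vanishing of $K$ and $K_N$ should all fall out of short algebraic manipulations in $\HC$.
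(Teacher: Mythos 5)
Your proposal has the right overall architecture — decode the data as a spinor field satisfying the Killing-type equation, invoke the representation formula $F=\int\langle\langle X\cdot\varphi,\varphi\rangle\rangle$, and read off the curvature — and this is indeed the same route the paper takes. However, there are a few misattributions of which hypothesis is doing which job, and the key computational bridge is only gestured at.

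First, the condition $[\alpha_1,\alpha_2]=0$ is not used to show that $\omega_1 J+\omega_2 K$ becomes closed after conjugation by $g$. In the paper, one \emph{defines} the Riemannian metric on $U$ by declaring $(u_1,u_2)=(\alpha_1,\alpha_2)$ orthonormal; the commutativity $[\alpha_1,\alpha_2]=0$ then says this metric is flat and the frame $(u_1,u_2)$ is parallel. Together with the trivial flat Lorentzian bundle $E=\R^{1,1}\times U$, this makes the spinorial connection on $\tilde Q$ flat, so one has a parallel section $\tilde s$ and may set $\varphi:=[\tilde s,g]$. The closedness of $\xi$ is then inherited from Proposition \ref{prop fundamental xi}, once one has verified $\xi(X)=\langle\langle X\cdot\varphi,\varphi\rangle\rangle$ — it is not a separate consequence of $[\alpha_1,\alpha_2]=0$.

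Second, the claim that the specific combinations in \eqref{alpha function f h cor} "force the induced metric to be Riemannian and $K=K_N=0$" misidentifies their role. The metric being Riemannian is by fiat (it is the one making $\alpha_1,\alpha_2$ orthonormal). What \eqref{alpha function f h cor} encodes, via the algebraic identity \eqref{alpha case 1} of Proposition \ref{prop recov g u}, is exactly the statement that $\varphi=[\tilde s,g]$ solves the Dirac equation $D\varphi=\vec H\cdot\varphi$ with $\vec H=h_0e_0+h_1e_1$; this is the central lemma your "short algebraic manipulations in $\HC$" are covering for, and it deserves to be stated and proved. Third, the vanishing of $K$ and $K_N$ does not come from a "direct computation of the Gauss and Ricci equations" or from an argument about the second fundamental form being null; the paper's much shorter route is to observe that $g$ is a holomorphic map (since $dg\,g^{-1}=\theta_1 J+\theta_2 K$ with $\theta_1,\theta_2$ holomorphic, Theorem \ref{prop etap}), hence the Gauss map $G=g^{-1}Ig$ is holomorphic (Lemma \ref{etap2}), and then $K+iK_N=0$ follows from $G^*\omega_{\mathcal Q}=(K+iK_N)\omega_M$ (Proposition \ref{prop pull back}). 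For the converse you have the right idea (invoke Theorem \ref{th main result} and trivialize in a parallel frame), but again the nondegeneracy of the osculating spaces is what guarantees $f_1^2+f_2^2\neq0$, via the identity $G^*H=4(f_1^2+f_2^2)\,dz^2$ — a point your sketch does not explain.
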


We refer to Section \ref{preliminaries} for notation and to Section \ref{ss recovering} for the proof. To obtain this result, we first study the Gauss map of a spacelike surface in four-dimensional Minkowski space, and we obtain the following results, which might be of independent interest: we first show that the Grassmannian of the spacelike oriented planes in $\R^{1,3}$ naturally identifies to a complex two-sphere in $\C^3,$ and we compute the expression of the pull-back by the Gauss map of the natural complex area form defined on it. This expression leads to simple extrinsic proofs of the Gauss-Bonnet and the Whitney's Theorems (Remark \ref{gauss-bonnet whitney}), and also readily furnishes the natural complex structure induced by the Gauss map on a flat spacelike surface with flat normal bundle and regular Gauss map; this complex structure coincides with the complex structure introduced in \cite{GMM}.

Finally, we describe the flat surfaces in three-dimensional hyperbolic space $\mathbb{H}^3,$ giving a spinorial proof of another result by J.A. G\'alvez, A. Mart\'{\i}nez and F. Mil\'an \cite{GMM1,GMM}.
\\

The problem of the global description of the flat surfaces with flat normal bundle and regular Gauss map in four-dimensional Euclidean space  is still an open question; see \cite{Gal} for some recent results. A spinorial approach as in Theorem \ref{th main result} and Corollary \ref{corollary2 theorem} above might give  some new insights to this subject. 
\\

The outline of the paper is as follows: Section \ref{preliminaries} is devoted to preliminaries on Clifford algebras and spinors induced on a spacelike surface in four-dimensional Minkowski space; we systematically use the complex quaternions to describe the various algebraic tools. Section \ref{sec fundamental eq} deals with the relation between spinors fields induced on a spacelike surface and the equations of Gauss, Codazzi and Ricci satisfied by the second fundamental form of the surface; the proof of the  main technical lemma is given Section \ref{secpflem}. In Section \ref{sec representation}  we give the representation formula of the immersion by the spinor field, which may be considered as an extension to general spacelike surfaces of the Weierstrass representation of maximal surfaces. In Section \ref{sec hypersurfaces}, we focus our attention to the special cases of immersions in $\R^3,$ $\mathbb{H}^3$ and $\R^{1,2}.$ In Section \ref{sec flat} we study flat surfaces with flat normal bundle and regular Gauss map: we first study the Gauss map of  a spacelike surface, in the general case, and in the case where the surface is flat with flat normal bundle (Section \ref{ss gauss map}); we then give a description of flat surfaces with flat normal bundle and regular Gauss map using spinors (the analysis of the problem is done Section \ref{spinor description} and the synthesis Section \ref{ss recovering}), and we finally describe the flat surfaces in three-dimensional hyperbolic space $\mathbb{H}^3.$ 
\section{Preliminaries}\label{preliminaries}
In this section, we first give algebraic preliminaries about Clifford algebras and spinors in four-dimensional Minkowski space $\R^{1,3}$, then quote the basic properties of spinors induced on a spacelike surface of $\R^{1,3},$ and finally describe the corresponding abstract constructions, starting with an abstract Riemannian surface and a Lorentzian bundle of rank 2 on the surface.
\subsection{Spinors of the four-dimensional Minkowski space}
\subsubsection{The Clifford algebra, the Spin group and the spinorial representation}\label{basic facts}
The Minkowski space $\R^{1,3}$ is the space $\R^4$ endowed with the metric
$$g=-dx_0^2+dx_1^2+dx_2^2+dx_3^2.$$
Using the Clifford map 
\begin{eqnarray}
\R^{1,3}&\rightarrow&\HC(2)\label{Clifford map}\\
(x_0,x_1,x_2,x_3)&\mapsto& \left(\begin{array}{cc}0&ix_0\1+x_1I+x_2J+x_3K\\-ix_0\1+x_1I+x_2J+x_3K&0\end{array}\right)\nonumber
\end{eqnarray}
where $\HC(2)$ stands for the set of $2\times 2$ matrices with entries belonging to the set of complex quaternions $\HC,$ the Clifford algebra of $\R^{1,3}$ is
$$Cl(1,3)=\left\{\left(\begin{array}{cc}a&b\\\tch{b}&\tch{a}\end{array}\right),\ a,b\in\HC\right\}$$
where, if $\xi=q_0\1+q_1I+q_2J+q_3K$ belongs to $\HC,$ we denote  
$$\tch{\xi}:=\overline{q_0}\1+\overline{q_1}I+\overline{q_2}J+\overline{q_3}K.$$
The Clifford sub-algebra of elements of even degree is
\begin{equation}\label{identification Cl0}
Cl_0(1,3)=\left\{\left(\begin{array}{cc}a&0\\0&\tch{a}\end{array}\right),\ a\in\HC\right\}\simeq\HC.
\end{equation}
Consider the bilinear map $H:\HC\times\HC\rightarrow\C$ defined by
\begin{equation}\label{H HC}
H(\xi,\xi')=q_0q_0'+q_1q_1'+q_2q_2'+q_3q_3'
\end{equation}
where $\xi=q_0\1+q_1I+q_2J+q_3K$ and $\xi'=q_0'\1+q_1'I+q_2'J+q_3'K.$ It is $\C$-bilinear for the natural complex structure $i$ on $\HC.$ Its real part, denoted by $\langle.,.\rangle,$ is a real scalar product of signature $(4,4)$ on $\HC.$ We consider
$$Spin(1,3):=\{q\in\HC:\ H(q,q)=1\}\hspace{.5cm}\subset \hspace{.5cm}Cl_0(1,3).$$
Using the identification 
\begin{eqnarray*}
\R^{1,3}&\simeq&\{ix_0\1+x_1I+x_2J+x_3K,\ (x_0,x_1,x_2,x_3)\in\R^4\}\\
&\simeq&\{\xi\in\HC:\ \xi=-\tch{\overline{\xi}}\},
\end{eqnarray*}
 where, if $\xi=q_0\1+q_1I+q_2J+q_3K$ belongs to $\HC,$ we denote  $\overline{\xi}=q_0\1-q_1I-q_2J-q_3K,$ we get the double cover
\begin{eqnarray}
\Phi:Spin(1,3)&\stackrel{2:1}{\longrightarrow}& SO(1,3)\label{double cover}\\
q&\mapsto &(\xi\in\R^{1,3}\mapsto q\xi \tch{q}^{-1}\in\R^{1,3}).\nonumber
\end{eqnarray}
Here and below $SO(1,3)$ stands for the component of the identity of the group of Lorentz transformations of $\R^{1,3}.$ Let us denote by $\rho:Cl(1,3)\rightarrow End(\HC)$ the complex representation of $Cl(1,3)$ on $\HC$ given by
\begin{eqnarray*}
\rho\left(\begin{array}{cc}a&b\\\tch{b}&\tch{a}\end{array}\right):&&\xi\in\HC\simeq\left(\begin{array}{c}\xi\\\tch{\xi}\end{array}\right)\mapsto\left(\begin{array}{cc}a&b\\\tch{b}&\tch{a}\end{array}\right)\left(\begin{array}{c}\xi\\\tch{\xi}\end{array}\right)\simeq a\xi+b\tch{\xi}\in\HC,
\end{eqnarray*}
where the complex structure on $\HC$ is given here by the multiplication by $I$ on the right. The spinorial representation of $Spin(1,3)$ is the restriction to $Spin(1,3)$ of the representation $\rho$ and reads
\begin{eqnarray*}
\rho_{|Spin(1,3)}:\ Spin(1,3)&\rightarrow &End_{\C}(\HC)\\
a&\mapsto &(\xi\in\HC\mapsto a\xi\in\HC).
\end{eqnarray*}
This representation splits into
$$\HC=S^+\oplus S^-$$
where $S^+=\{\xi\in\HC:\ \xi I=i\xi\}$ and $S^-=\{\xi\in\HC:\ \xi I=-i\xi\}.$ Explicitly, we have
$$S^+=(\C\oplus\C J)(\1-iI)\hspace{1cm}\mbox{and}\hspace{1cm}S^-=(\C\oplus\C J)(\1+iI).$$
Note that, if $(e_0,e_1,e_2,e_3)$ stands for the canonical basis of $\R^{1,3},$ the complexified volume element $i\ e_0\cdot e_1\cdot e_2\cdot e_3$ acts as $+Id$ on $S^+$ and as $-Id$ on  $S^-.$ 

We finally observe that $Spin(1,3)$ also naturally acts on $\HC$ by multiplication on the right
\begin{eqnarray*}
Spin(1,3)&\rightarrow&(\HC\rightarrow\HC)\\
q&\mapsto&(\xi\mapsto \xi q),
\end{eqnarray*}
and that the following property holds:
\begin{equation}\label{property right action}
X\cdot(\xi q)=(X\cdot\xi)\tch{q}
\end{equation}
for all $X\in \R^{1,3}\subset Cl_1(1,3),\ \xi\in\HC$ and $q\in Spin(1,3);$ this is because the action of $X$ on $\xi q$ is given by the action on  $\left(\begin{array}{cc}\xi q\\\tch{\xi}\tch{q}\end{array}\right)$ of the matrix representation (\ref{Clifford map}). 

\subsubsection{Spinors under the splitting $\R^{1,3}=\R^{1,1}\times\R^2$}\label{preliminaries splitting}
We now consider the splitting $\R^{1,3}=\R^{1,1}\times\R^2$ and the corresponding inclusion $SO(1,1)\times SO(2)\subset SO(1,3).$ By using the very definition (\ref{double cover}) of $\Phi,$ we easily get
\begin{eqnarray}
\Phi^{-1}(SO(1,1)\times SO(2))&=&\{\cos z +\sin z I,\ z\in\C\}=:S^1_{\C}\hspace{.3cm}\subset Spin(1,3);
\end{eqnarray}
more precisely, setting $z=\theta+i\varphi,$ $\theta,\varphi\in\R,$ we have, in $\HC,$
$$\cos z+\sin z I=(\cosh\varphi+i\sinh\varphi I).(\cos\theta+\sin\theta I),$$
and $\Phi(\cos z +\sin z I)$ is the Lorentz transformation of $\R^{1,3}$ which consists of a Lorentz transformation of angle $2\varphi$ in $\R^{1,1}$ and a rotation of angle $2\theta$ in $\R^2$.
Thus, defining
\begin{equation}\label{def spin1,1}
Spin(1,1):=\{\pm\left(\cosh\varphi+i\sinh\varphi I\right),\ \varphi\in\R\}\subset Spin(1,3)
\end{equation}
and
\begin{equation}\label{def spin2}
Spin(2):=\{\cos\theta+\sin\theta I,\ \theta\in\R\}\subset Spin(1,3),
\end{equation}
we have 
$$S^1_{\C}=Spin(1,1).Spin(2)\simeq Spin(1,1)\times Spin(2)/\mathbb{Z}_2$$
and the double cover
$$\Phi:\hspace{.5cm}S^1_{\C}\hspace{.3cm}\stackrel{2:1}{\longrightarrow}\hspace{.3cm} SO(1,1)\times SO(2).$$
 If we now restrict the spinorial representation $\rho$ of $Spin(1,3)$ to $S^1_{\C}\subset Spin(1,3),$ the representation in $\HC=S^+\oplus S^-$ splits into the sum of four complex lines: 
\begin{equation}\label{splitting HC}
S^+=S^{++}\oplus S^{--}\hspace{1cm}\mbox{ and }\hspace{1cm}S^-=S^{+-}\oplus S^{-+}
\end{equation}
where 
$$S^{++}=\C J(\1-iI),\hspace{.2cm} S^{--}=\C(\1-iI),\hspace{.2cm}S^{+-}=\C(\1+iI)\hspace{.2cm}\mbox{and}\hspace{.2cm}S^{-+}=\C J(\1+iI).$$
Note that $e_0\cdot e_1$ acts as $+Id$ on $S^{++}$ and on $S^{+-},$ and as $-Id$ on $S^{--}$ and on $S^{-+},$ whereas $i\ e_2\cdot e_3$ acts as $+Id$ on $S^{++}$ and on $S^{-+},$ and as $-Id$ on $S^{--}$ and on $S^{+-}.$
\begin{rem}\label{rep isom tensor}
It is not difficult to show that the representation
\begin{eqnarray}
Spin(1,1)\times Spin(2)&\rightarrow& End_{\C}(\HC)\label{rep spin11spin2}\\
(g_1,g_2)&\mapsto&\rho(g):\xi\mapsto g\xi,\nonumber
\end{eqnarray}
where $g=g_1g_2\in S^1_{\C}=Spin(1,1).Spin(2),$  is equivalent to the representation 
\begin{equation}\label{splitting rep spin}
\rho_1\otimes\rho_2=\rho_1^+\otimes\rho_2^+\ \oplus\ \rho_1^-\otimes\rho_2^-\ \oplus\ \rho_1^+\otimes\rho_2^-\ \oplus\ \rho_1^-\otimes\rho_2^+
\end{equation}
of $Spin(1,1)\times Spin(2),$ where $\rho_1=\rho_1^++\rho_1^-$ and $\rho_2=\rho_2^++\rho_2^-$ are the spinorial representations of $Spin(1,1)$ and $Spin(2);$ moreover, the decomposition (\ref{splitting HC}) corresponds to the decomposition (\ref{splitting rep spin}). Indeed, the restrictions of the representation (\ref{rep spin11spin2}) to the complex lines $S^{++},$ $S^{--},$ $S^{+-}$ and $S^{-+}$ are respectively equivalent to the multiplications by $e^{-iz},$ $e^{iz},$ $e^{i\overline{z}}$ and $e^{-i\overline{z}}$ on $\C,$ with $z=\theta+i\varphi,$ where $\varphi\in\R$ describes the $Spin(1,1)-$factor and $\theta\in\R/2\pi\Z$ the $Spin(2)-$factor of $Spin(1,1)\times Spin(2),$ as in (\ref{def spin1,1})-(\ref{def spin2}). 
\end{rem}

\subsection{Basic properties of spinors induced on a spacelike surface in $\R^{1,3}$}
Let $(M^2,g)$ be an oriented Riemannian surface isometrically immersed into the four-dimensio\-nal Minkowski space $\R^{1,3}$. Let us denote by $E$ its normal bundle and by $B:TM\times TM\lgra E$ its second fundamental form defined by
$$B(X,Y)=\overline{\nabla}_XY-\nabla_XY,$$
where $\nabla$ and $\nablab$ are the Levi-Civita connections of $M$ and $\R^{1,3}$ respectively. We moreover assume that $E$ is oriented, in space and in time. The second fundamental form satisfies the equations
\be
\item $K=\left<B(e_2,e_2),B(e_3,e_3)\right>-|B(e_2,e_3)|^2 $  (Gauss equation), 
\item $K_N=\left<(S_{e_0}\circ S_{e_1}-S_{e_1}\circ S_{e_0})(e_2),e_3\right> $ (Ricci equation), 
\item  $(\widetilde{\nabla}_XB)(Y,Z)-(\widetilde{\nabla}_YB)(X,Z)=0$ (Codazzi equation),
\ee
where $K$ and $K_N$ are the curvatures of $(M,g)$ and $E,$ $(e_2,e_3)$ and $(e_0,e_1)$ are orthonormal, space- and time-oriented bases of $TM$ and $E$ respectively, and where $\widetilde{\nabla}$ is the natural connection induced on $T^*M^{\otimes 2}\otimes E$. As usual, if $\nu\in E,$ $S_{\nu}$ stands for the symmetric operator on $TM$ such that $$\langle S_{\nu}(X),Y\rangle=\langle B(X,Y),\nu\rangle$$ for all $X,Y\in TM.$
\begin{rem}\label{rmq th fal}
Suppose that $E$ is a bundle of rank 2 on a Riemannian surface $(M,g),$ equipped with a Lorentzian metric and a compatible connection, and assume that $B:TM\times TM\rightarrow E$ is a bilinear map satisfying the equations (1), (2) and (3) above; the fundamental theorem says that there is locally a unique isometric immersion of $M$ in $\R^{1,3}$ with normal bundle $E$ and second fundamental form B. We will obtain a spinorial proof of this theorem, in Corollary \ref{spinorial proof fundamental theorem} below.
\end{rem}
It is well known that there is an identification between the spinor bundle of $\R^{1,3}$ over $M,$ $\Sigma\R^{1,3}_{|M},$ and the spinor bundle of $M$ twisted by the normal bundle $\Sigma=\Sigma E\otimes\Sigma M$; see  \cite{Ba,HZ}, and also Remark \ref{rep isom tensor}. Moreover, we have the spinorial Gauss formula: for any $\varphi\in\Sigma$ and any $X\in TM$,
$$\overline\nabla_X\varphi=\nabla_X\varphi+\frac{1}{2}\sum_{j=2,3}e_j\cdot B(X,e_j) \cdot\varphi,$$
where $\overline\nabla$ is the spinorial connection of $\Sigma\R^{1,3},$ $\nabla$ is the spinorial connection of $\Sigma$ defined by
$$\nabla=\ \ \nabla^{\Sigma E}\otimes\iid_{\Sigma M}\ \ +\ \ \iid_{\Sigma E}\otimes\nabla^{\Sigma M}$$
and the dot $"\cdot"$ is the Clifford action in $\R^{1,3};$ see \cite{Ba,Va}. Therefore, if $\varphi$ is a parallel spinor of $\R^{1,3}$, then its restriction to $M$ satisfies
\beqt\label{eqkilling}
\nabla_X\varphi=-\frac{1}{2}\sum_{j=2,3}e_j\cdot B(X,e_j) \cdot\varphi,
\eeqt
and by taking the trace the following Dirac equation
\beqt\label{eqdirac}
D\varphi=\vec{H}\cdot\varphi,
\eeqt
where $D\varphi:=e_2\cdot\nabla_{e_2}\varphi+e_3\cdot\nabla_{e_3}\varphi$ and where $\displaystyle{\vec H=\frac{1}{2}\ tr_gB}$ is the mean curvature vector of $M$ in $\R^{1,3}.$
\\

Finally, correspondingly to the splittings (\ref{splitting HC})-(\ref{splitting rep spin}) we have $\Sigma=\Sigma^+\oplus\Sigma^-$ with
$$\Sigma^+=\Sigma^{++}\oplus\Sigma^{--}\hspace{1cm}\mbox{and}\hspace{1cm}\Sigma^-=\Sigma^{+-}\oplus\Sigma^{-+},$$
where 
$$\Sigma^{++}=\Sigma^{+}E\otimes\Sigma^{+}M,\hspace{.5cm} \Sigma^{--}=\Sigma^{-}E\otimes\Sigma^{-}M,\hspace{.5cm} \Sigma^{+-}=\Sigma^{+}E\otimes\Sigma^{-}M$$
and
$$\Sigma^{-+}=\Sigma^{-}E\otimes\Sigma^{+}M.$$
\subsection{Twisted spinor bundle}\label{twisted spinor bundle}
Let $(M^2,g)$ be an oriented Riemannian surface and $E$ a space- and time-oriented Lorentzian bundle of rank 2 over $M$ equipped with a compatible connection, with given spin structures. We consider the spinor bundle $\Sigma$ over $M$ twisted by $E$ and defined by
$$\Sigma:=\Sigma E\otimes\Sigma M.$$
We endow $\Sigma$ with the spinorial connection $\nabla$ defined by
$$\nabla=\nabla^{\Sigma E}\otimes\iid_{\Sigma M}+\iid_{\Sigma E}\otimes\nabla^{\Sigma M}.$$
We also define the Clifford product $\cdot$ by
$$\left\{\begin{array}{l} X\cdot\varphi=(X\cdot_{_E}\alpha)\otimes\sigma\quad\text{if}\ X\in\Gamma(E)\\ \\
X\cdot\varphi=\overline\alpha\otimes(X\cdot_{_M}\sigma)\quad\text{if}\ X\in\Gamma(TM)
\end{array}
\right.$$
where $\varphi=\alpha\otimes\sigma$ belongs to $\Sigma,$ $\cdot_{_E}$ and $\cdot_{_M}$ denote the Clifford actions on $\Sigma E$ and $\Sigma M$ and $\overline{\alpha}=\alpha^+-\alpha^-$ $\in$ $\Sigma E=\Sigma^+E\oplus\Sigma^-E.$

The twisted spinor bundle $\Sigma$ is naturally a vector bundle on $\HC$ (on the right): let us denote by $\rho_1=\rho_1^++\rho_1^-$ and $\rho_2=\rho_2^++\rho_2^-$ the spinorial representations of $Spin(1,1)$ and $Spin(2).$ Recall first that $\Sigma$ is the vector bundle associated to the principal bundle 
\begin{equation}\label{def Q tilde}
\tilde{Q}:=\tilde{Q}_1\times_M \tilde{Q}_2=\{(\tilde{s_1},\tilde{s_2})\in\tilde{Q_1}\times\tilde{Q_2}:\ p_1(\tilde{s_1})=p_2(\tilde{s_2})\}
\end{equation}
and to the representation $\rho_1\otimes\rho_2$ of the structure group $Spin(1,1)\times Spin(2),$ where, if $Q_1$ and $Q_2$ denote the $SO(1,1)$ and $SO(2)$ principal bundles of the oriented and orthonormal frames  of $E$ and $TM,$ the double coverings $\tilde{Q_1}\rightarrow Q_1$ and $\tilde{Q_2}\rightarrow Q_2$ are the given spin structures on $E$ and $TM$ and the maps $p_1:\tilde{Q_1}\rightarrow M$ and $p_2:\tilde{Q_2}\rightarrow M$ are the corresponding projections. We noted Remark \ref{rep isom tensor} that the representation $\rho_1\otimes\rho_2$ is equivalent to the representation
\begin{eqnarray}\label{rep regular}
Spin(1,1)\times Spin(2)&\rightarrow& End_{\C}(\HC)\\
(g_1,g_2)&\mapsto&\rho(g):\xi\mapsto g\xi\nonumber
\end{eqnarray}
where $g=g_1g_2\in S^1_{\C}=Spin(1,1).Spin(2).$ Obviously, the maps  $\xi\mapsto g\xi$ belong in fact to $End_{\HC}(\HC),$ the space of endomorphisms of $\HC$ which are $\HC-$linear, where the linear structure on $\HC$ is given by the multiplication on the right: $\Sigma$ is thus naturally equipped with a linear right-action of $\HC.$

Since the group $S^1_{\C}=Spin(1,1).Spin(2)$ belongs to $Spin(1,3),$ which preserves the  complex bilinear map $H$ defined on $\HC$ by (\ref{H HC}), the bundle $\Sigma$ is also equipped with a complex bilinear map $H$ and with a real scalar product $\langle.,.\rangle:=\pre\ H$ of signature $(4,4).$ We note the following properties: $H$ vanishes on the bundles $\Sigma^+$ and $\Sigma^-$ (and thus also on the four bundles $\Sigma^{\pm\pm}$) since $H$ vanishes on $S^+$ and $S^-;$ on the other hand, the real scalar product $\langle.,.\rangle$ has signature $(2,2)$ on the two bundles $\Sigma E\otimes\Sigma^+M$ and $\Sigma E\otimes\Sigma^-M,$ and these bundles are moreover orthogonal with respect to $H.$ These last properties are consequences of the fact that these bundles are respectively associated to the representations of $S^1_{\C}$ on $S^{++}\oplus S^{-+}=\C J\oplus\C K$ and on $S^{+-}\oplus S^{--}=\C \1\oplus\C I$ given by the multiplication on the left; see Section \ref{preliminaries splitting}. 

We may also define a $\HC$-valued scalar product on $\Sigma$ by 
\begin{equation}\label{def scalar HC}
\langle\langle\psi,\psi'\rangle\rangle:=\overline{\xi'}\xi\in\HC,
\end{equation}
where $\xi$ and $\xi'\in\HC$ are respectively the components of $\psi$ and $\psi'$ in some local section of $\tilde{Q}$  (recall that, if $\xi'=q_0'\1+q_1'I+q_2'J+q_3'K$ belongs to $\HC,$ we define $\overline{\xi'}=q_0'\1-q_1'I-q_2'J-q_3'K$). This scalar product satisfies the following properties: for all $\psi,\psi'\in\Sigma$ and for all $X\in E\oplus TM,$
\begin{equation}\label{properties pairing}
\langle\langle\psi,\psi'\rangle\rangle=\overline{\langle\langle\psi',\psi\rangle\rangle}\hspace{1cm}\mbox{and}\hspace{1cm}\langle\langle X\cdot\psi,\psi'\rangle\rangle=-\bigtch{\langle\langle\psi,X\cdot\psi'\rangle\rangle}.
\end{equation}
We stress that the product $\langle\langle.,.\rangle\rangle$ is not $\C-$bilinear with respect to the standard complex structure on $\Sigma$ (which corresponds to the right multiplication by $I$ on $\HC$) but is $\C-$bilinear if $\Sigma$ is endowed with the complex structure given by the Clifford action of $-e_0\cdot e_1\cdot e_2\cdot e_3$ (which corresponds to the multiplication by $i$ on $\HC$).

Finally, note that, by definition, $H(\psi,\psi')$ is the coefficient of $\1$ in the decomposition of $\langle\langle\psi,\psi'\rangle\rangle$ in the basis $\1, I,J,K$ of $\HC,$ and that (\ref{properties pairing}) yields
\begin{equation}\label{H clifford}
H(\psi,\psi')=H(\psi',\psi)\hspace{1cm}\mbox{and}\hspace{1cm}H(X\cdot\psi,\psi')=-\overline{H(\psi,X\cdot\psi')}
\end{equation}
for all $\psi,\psi'\in\Sigma$ and for all $X\in E\oplus TM.$

\subsection{Notation}\label{notation generale}
Throughout the paper, we will use the following notation: if $\tilde{s}\in\tilde{Q}$ is a given frame, the brackets $[\cdot]$ will denote the coordinates $\in \HC$ of the spinor fields in $\tilde{s},$ that is, for $\varphi\in\Sigma,$
$$\varphi\simeq[\tilde{s},[\varphi]]\hspace{1cm}\in\hspace{.5cm} \Sigma\simeq\tilde{Q}\times\HC/\rho_1\otimes\rho_2.$$
  We will also use the brackets to denote the coordinates in $\tilde{s}$ of the elements of the Clifford algebra $Cl(E\oplus TM)$: $X\in Cl_0(E\oplus TM)$ and  $Y\in Cl_1(E\oplus TM)$ will be respectively represented by $[X]$ and $[Y]\in\HC$ such that, in $\tilde{s},$ 
$$X\simeq \left(\begin{array}{cc}[X]&0\\0&\tch{[X]}\end{array}\right)\hspace{1cm}\mbox{and}\hspace{1cm}Y\simeq\left(\begin{array}{cc}0&[Y]\\\tch{[Y]}&0\end{array}\right).$$
Note that
$$[X\cdot\varphi]=[X][\varphi]\hspace{1cm}\mbox{and}\hspace{1cm}[Y\cdot\varphi]=[Y]\tch{[\varphi]}.$$
If $(e_0,e_1)$ and $(e_2,e_3)$ are positively oriented and orthonormal frames of $E$ and $TM,$ a frame $\tilde{s}\in\tilde{Q}$ such that $\pi(\tilde{s})=(e_0,e_1,e_2,e_3),$ where $\pi:\tilde{Q}\rightarrow Q_1\times_M Q_2$ is the natural projection onto the bundle of the orthonormal frames of $E\oplus TM$, will be called \textit{adapted to the frame} $(e_0,e_1,e_2,e_3);$ in such a frame, $e_0,$ $e_1,$ $e_2$ and $e_3\in Cl_1(E\oplus TM)$ are respectively represented by $i\textit{1},$ $I,$ $J$ and $K\in\HC.$

\section{Spinor field and fundamental equations}\label{sec fundamental eq}
We prove here that a spinor field solution of the Dirac equation gives rise to a bilinear map satisfying the equations of Gauss, Codazzi and Ricci. As in the paper \cite{Fr} (and afterwards in \cite{La,LR,Mo, Ro} and \cite{BLR}), the proof relies on the fact that such a spinor field necessarily solves a Killing type equation (equation (\ref{eqnnablaphi}) in Lemma \ref{lem1} below).  

\begin{thm}\label{thm1}
Let $(M^2,g)$ be an oriented surface and $E$ an oriented Lorentzian vector bundle of rank $2$ on $M$ equipped with a compatible connection, with given spin structures. Let $\Sigma=\Sigma E\otimes\Sigma M$ be the twisted spinor bundle and $D$ its Dirac operator. Let $\vec{H}$ be a section of $E$ and assume that there exists a spinor $\varphi\in\Gamma(\Sigma)$ solution of 
\begin{equation}\label{dirac equation}
D\varphi=\vec{H}\cdot\varphi
\end{equation} 
with $H(\varphi,\varphi)=1$. Then the bilinear map 
$$B:TM\times TM  \longrightarrow E$$
defined by
\begin{equation}\label{B function phi}
\left<B(X,Y),\nu\right>=2\left<X\cdot\nabla_Y\varphi,\nu\cdot\varphi\right>
\end{equation}
for all $X,Y\in \Gamma(TM)$ and all $\nu\in \Gamma(E)$ is symmetric, satisfies the Gauss, Codazzi and Ricci equations and is such that
$$\vec{H}=\frac{1}{2}\trace{B}.$$
\end{thm}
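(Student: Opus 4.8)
The plan is to reduce everything to the Killing-type equation. First I would establish, as the main technical lemma, that a spinor field $\varphi\in\Gamma(\Sigma)$ with $H(\varphi,\varphi)=1$ solving the Dirac equation $D\varphi=\vec H\cdot\varphi$ automatically satisfies
$$\nabla_X\varphi=-\frac12\sum_{j=2,3}e_j\cdot B(X,e_j)\cdot\varphi$$
for all $X\in\Gamma(TM)$, where $B$ is exactly the bilinear map defined by (\ref{B function phi}); this is the analogue of the passage from the Dirac equation to the Killing equation in \cite{Fr,La,LR,Mo,Ro,BLR}. The argument should go as follows: decompose $\nabla_X\varphi$ into its Clifford-algebraic components relative to the orthonormal frame $(e_2,e_3)$ of $TM$, i.e.\ write $e_2\cdot\nabla_{e_2}\varphi$, $e_3\cdot\nabla_{e_3}\varphi$, $e_2\cdot\nabla_{e_3}\varphi$, $e_3\cdot\nabla_{e_2}\varphi$ and reassemble. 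Using the Dirac equation to control the "diagonal" part and the very definition of $B$ via the bilinear pairing $\langle\cdot,\cdot\rangle=\pre H$ (together with the property (\ref{H clifford}), $H(X\cdot\psi,\psi')=-\overline{H(\psi,X\cdot\psi')}$, which makes Clifford multiplication by tangent vectors skew-adjoint up to conjugation), one identifies the "off-diagonal" part with the $B$-term. The hypothesis $H(\varphi,\varphi)=1$ is what makes the pairing with $\varphi$ nondegenerate enough to recover all the components. I expect this lemma to be the real obstacle, and indeed the excerpt announces that "the proof of the main technical lemma is given Section \ref{secpflem}", so in the writeup I would state it as Lemma~\ref{lem1} and defer the computation.

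Granting the Killing equation, the symmetry of $B$ is immediate from its definition: from (\ref{B function phi}), $\langle B(X,Y),\nu\rangle = 2\langle X\cdot\nabla_Y\varphi,\nu\cdot\varphi\rangle$, and substituting the Killing equation for $\nabla_Y\varphi$ turns the right-hand side into a purely algebraic expression in $X$, $Y$, $\nu$, $\varphi$ which one checks is symmetric in $X,Y$ using the Clifford relations $e_i\cdot e_j+e_j\cdot e_i=-2\delta_{ij}$ and the adjointness property (\ref{H clifford}) for $\nu\in E$. Next, $\vec H=\frac12\trace B$: tracing the Killing equation over an orthonormal basis $(e_2,e_3)$ of $TM$ and comparing with the Dirac equation $D\varphi=e_2\cdot\nabla_{e_2}\varphi+e_3\cdot\nabla_{e_3}\varphi=\vec H\cdot\varphi$ gives, after Clifford multiplication by $\nu\in E$ and pairing with $\varphi$, exactly $\langle\vec H,\nu\rangle=\frac12\langle\trace B,\nu\rangle$, whence $\vec H=\frac12\trace B$ since $\nu$ is arbitrary.

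Finally, the Gauss, Codazzi and Ricci equations for $B$. Here I would differentiate the Killing equation (\ref{eqnnablaphi}) once more and antisymmetrize to compute the curvature operator $\nabla_X\nabla_Y\varphi-\nabla_Y\nabla_X\varphi-\nabla_{[X,Y]}\varphi$ acting on $\varphi$. On the one hand this curvature is the action on $\varphi$ of the curvature $R^\Sigma$ of the twisted spinor bundle, which is expressed through the curvatures $R^{TM}$ of $M$ (hence through $K$) and $R^E$ of the normal bundle (hence through $K_N$) by the standard spinorial curvature formula. On the other hand, substituting the Killing equation repeatedly produces an expression purely in terms of $B$ and its covariant derivative $\widetilde\nabla B$. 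Equating the two and then projecting onto the various Clifford components — using that $e_0\cdot e_1$ and $i\,e_2\cdot e_3$ act as $\pm\mathrm{Id}$ on the four line bundles $\Sigma^{\pm\pm}$, as recorded after (\ref{splitting HC}), together with $H(\varphi,\varphi)=1$ to extract scalars — isolates: the terms antisymmetric in the tangent Clifford part and involving only $TM$-curvature yield the Gauss equation $K=\langle B(e_2,e_2),B(e_3,e_3)\rangle-|B(e_2,e_3)|^2$; the terms mixing an $E$-Clifford factor with a $TM$-Clifford factor and involving $\widetilde\nabla B$ yield the Codazzi equation $(\widetilde\nabla_XB)(Y,Z)=(\widetilde\nabla_YB)(X,Z)$; and the terms antisymmetric in the $E$-Clifford part, involving the $E$-curvature, yield the Ricci equation $K_N=\langle(S_{e_0}\circ S_{e_1}-S_{e_1}\circ S_{e_0})(e_2),e_3\rangle$. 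The bookkeeping in this last step — correctly matching Clifford types across the four summands $\Sigma^{\pm\pm}$ — is the delicate part of the synthesis, but it is a finite computation once the Killing equation is in hand; this is the standard pattern from \cite{Fr,BLR} adapted to the complex-quaternionic formalism of Section~\ref{preliminaries}.
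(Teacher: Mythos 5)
Your outline follows the paper's proof in its overall architecture: Lemma~\ref{lem1} (passage from the Dirac equation to the Killing-type equation $\nabla_X\varphi=\eta(X)\cdot\varphi$) is exactly the paper's main technical step, likewise deferred to Section~\ref{secpflem}, and the remainder of the paper's proof of Theorem~\ref{thm1} is the curvature computation you sketch (differentiate once more, split $\mathcal{R}(e_2,e_3)\varphi=d^\nabla\eta(e_2,e_3)\cdot\varphi+(\eta(e_3)\cdot\eta(e_2)-\eta(e_2)\cdot\eta(e_3))\cdot\varphi$, compute each side, identify coefficients).

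Two details differ from the paper and deserve care. First, your separate ``algebraic symmetry check'' for $B$ is either circular or vacuous as described: Lemma~\ref{lem1} establishes $\eta(X)=-\tfrac12\sum_j e_j\cdot B(e_j,X)$ with $B(e_j,X)$ in that order of arguments, and substituting this into (\ref{B function phi}) only reproduces $\langle B(X,Y),\nu\rangle=\langle B(X,Y),\nu\rangle$ (the cross terms $e_2\cdot e_3\cdot\nu'$ drop by the auxiliary lemma of Section~\ref{secpflem}); whereas if you instead write $B(X,e_j)$ inside $\eta$ you have already used the symmetry you are trying to prove. In fact both the symmetry and $\vec H=\tfrac12\trace B$ come from the single $Cl_0$-identity $e_2\cdot\eta(e_2)+e_3\cdot\eta(e_3)=\vec H$ forced by the Dirac equation: its $E$-component gives the trace relation and its $e_2\cdot e_3\cdot E$-component gives $B(e_2,e_3)=B(e_3,e_2)$ — so your ``trace the Killing equation and compare with Dirac'' step actually yields the symmetry for free, and the separate symmetry argument should be dropped. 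Second, to pass from the curvature identity to the three structure equations, the paper does not project onto the four line bundles $\Sigma^{\pm\pm}$ (whose components of $\varphi$ may individually vanish even with $H(\varphi,\varphi)=1$). It instead invokes the uniform fact (Lemma~\ref{order2}) that a degree-two element $T$ of the Clifford bundle annihilating $\varphi$ must vanish, because in a local frame $T$ is a complex quaternion $q$ and $\varphi$ is an invertible complex quaternion $\xi$ (invertible since $\overline\xi\xi=H(\varphi,\varphi)\neq 0$), hence $q\xi=0$ forces $q=0$. Your $\Sigma^{\pm\pm}$-projection can be made to work, but it requires bookkeeping around possibly-vanishing components; the invertibility argument is the cleaner route and is the one the quaternionic formalism was set up to deliver.
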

\noindent
In the theorem and below,  we use the same notation $\left<.,.\right>$ to denote the scalar products on $TM,$ on $E,$ and on $\Sigma$ (the real scalar product, defined above, Section \ref{twisted spinor bundle}). In order to prove this theorem, we state the following lemma.
\begin{lem}\label{lem1}
If $\varphi$ is a solution of the Dirac equation (\ref{dirac equation}) then, for all $X\in \Gamma(TM)$,
\begin{equation}\label{eqnnablaphi}
\nabla_X\varphi=\eta(X)\cdot\varphi,
\end{equation}
with 
\begin{equation}\label{relation eta B}
\eta(X)=-\frac{1}{2}\sum_{j=2}^3e_j\cdot B(e_j,X),
\end{equation}
where $B$ is the bilinear map defined above.
\end{lem}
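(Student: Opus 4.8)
The plan is to run a Friedrich-type argument in three movements. The normalization $H(\varphi,\varphi)=1$ is used to show that $\nabla_X\varphi$ is the Clifford product of $\varphi$ with a $2$-form; the Dirac equation then forces that $2$-form into the mixed part $TM\wedge E$ of $Cl_2(E\oplus TM)=\Lambda^2(E\oplus TM)$ and pins down its trace; finally a short Clifford computation identifies the resulting bilinear map with the $B$ of (\ref{B function phi}). For the first movement, fix $p\in M$ and a local section $\tilde s$ of $\tilde Q$ that is $\nabla$-parallel at $p$. In this frame $\varphi$ has coordinates $[\varphi]\in\HC$, and $H(\varphi,\varphi)=1$ says exactly that $[\varphi]$ takes values in $Spin(1,3)=\{q\in\HC:\ H(q,q)=1\}$; in particular $[\varphi]_p$ is invertible in $\HC$. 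Since $\tilde s$ is parallel at $p$, $[\nabla_X\varphi]_p=X([\varphi])|_p\in T_{[\varphi]_p}Spin(1,3)=\mathfrak{spin}(1,3)\cdot[\varphi]_p$. Under the identification $Cl_0(1,3)\simeq\HC$ the image of $Cl_2(E\oplus TM)$ is precisely $\mathfrak{spin}(1,3)$, and its Clifford action on $\varphi$ is left multiplication by $[\,\cdot\,]$; hence there is a unique $A(X)\in Cl_2(E\oplus TM)$ with $\nabla_X\varphi=A(X)\cdot\varphi$, depending linearly on $X\in T_pM$.

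Next, decompose $A(X)=A^E(X)+A^m(X)+A^{TM}(X)$ along $\Lambda^2 E\oplus(E\wedge TM)\oplus\Lambda^2 TM$, writing $A^{TM}(X)=\lambda(X)\,e_2\cdot e_3$ and $A^E(X)=\mu(X)\,e_0\cdot e_1$ with $\lambda,\mu$ scalar, and $A^m(X)=-\frac12\sum_{j=2,3}e_j\cdot C(e_j,X)$ for a (unique) bilinear $C:TM\times TM\to E$. Substituting into $D\varphi=\vec H\cdot\varphi$ and using the invertibility of $\varphi$ again yields the algebraic relation $\sum_{j}e_j\cdot A(e_j)=\vec H$ in $Cl(E\oplus TM)$. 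Expanding the left side and sorting by Clifford degree, its $Cl_1$-part equals $\frac12\,\mathrm{tr}_g C+(\lambda(e_3)e_2-\lambda(e_2)e_3)$; comparison with $\vec H\in E$, together with the fact that the first summand lies in $E$ and the second in $TM$, forces $\lambda\equiv 0$ and $\frac12\,\mathrm{tr}_g C=\vec H$. Its $Cl_3$-part is $\mu(e_2)\,e_0\cdot e_1\cdot e_2+\mu(e_3)\,e_0\cdot e_1\cdot e_3-\frac12\,e_2\cdot e_3\cdot(C(e_3,e_2)-C(e_2,e_3))$, which must vanish; since the first two terms and the last lie in complementary subspaces of $Cl_3(E\oplus TM)$, we get $\mu\equiv 0$ (and, incidentally, $C$ symmetric). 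Hence $A^E\equiv A^{TM}\equiv 0$ and $\nabla_X\varphi=-\frac12\sum_{j}e_j\cdot C(e_j,X)\cdot\varphi$ for all $X$.

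Finally one checks that $C$ coincides with the bilinear map $B$ of (\ref{B function phi}). Using the compatibility relations (\ref{H clifford}), the normalization $H(\varphi,\varphi)=1$, the vanishing of $H$ on $\Sigma^+$ and $\Sigma^-$, and the resulting identity $H(\varphi,\omega\cdot\varphi)=0$ for every $2$-form $\omega$, one verifies the pointwise formula
$$\langle X\cdot e_k\cdot\nu'\cdot\varphi,\ \nu\cdot\varphi\rangle=-\langle X,e_k\rangle\,\langle\nu',\nu\rangle,\qquad X,e_k\in TM,\quad\nu,\nu'\in E.$$
Indeed, moving $X$, $e_k$ and $\nu'$ across to the second slot via (\ref{H clifford}) leaves $H(\varphi,\ \nu'\cdot e_k\cdot X\cdot\nu\cdot\varphi)$; writing $e_k\cdot X=-\langle e_k,X\rangle+\omega_1$ and $\nu'\cdot\nu=-\langle\nu',\nu\rangle+\omega_2$ with $\omega_1,\omega_2$ $2$-forms, the cross terms vanish because $H(\varphi,\omega_i\cdot\varphi)=0$, while $\omega_1\cdot\omega_2$ is a multiple of $e_0\cdot e_1\cdot e_2\cdot e_3$ and $H(\varphi,\,(e_0\cdot e_1\cdot e_2\cdot e_3)\cdot\varphi)=0$ by the splitting $\Sigma=\Sigma^+\oplus\Sigma^-$. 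Substituting $\nabla_Y\varphi=-\frac12\sum_k e_k\cdot C(e_k,Y)\cdot\varphi$ into (\ref{B function phi}) and applying this identity gives $\langle B(X,Y),\nu\rangle=\sum_k\langle X,e_k\rangle\langle C(e_k,Y),\nu\rangle=\langle C(X,Y),\nu\rangle$ for all $\nu\in E$, hence $B=C$. Setting $\eta(X):=A^m(X)=-\frac12\sum_{j}e_j\cdot B(e_j,X)$ yields (\ref{eqnnablaphi})--(\ref{relation eta B}).

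The main obstacle is the first movement: recognizing that $\nabla_X\varphi$ must be of the form (a $2$-form)$\,\cdot\varphi$. This is exactly where $H(\varphi,\varphi)=1$ is indispensable, as it places $[\varphi]$ in $Spin(1,3)$, making it invertible and identifying the admissible directions of $\nabla\varphi$ with $\mathfrak{spin}(1,3)\simeq Cl_2(E\oplus TM)$. The degree bookkeeping of the second movement and the Clifford identity of the third are then routine, if somewhat computational.
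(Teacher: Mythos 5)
Your proof reaches the correct conclusion and follows a genuinely different route from the paper's. The paper proves $\nabla_X\varphi=\eta(X)\cdot\varphi$ Friedrich-style: it decomposes $\varphi=\varphi^++\varphi^-$ in $\Sigma E\otimes\Sigma^+M\oplus\Sigma E\otimes\Sigma^-M$, picks null vectors $e,e'\in E$ to manufacture a basis $\{e_2\cdot e\cdot\varphi^-/h^-,\ e_2\cdot e'\cdot\varphi^-/h^-\}$ of $\Sigma E\otimes\Sigma^+M$, reads off a mixed $2$-form $\eta$ from the expansion of $\nabla_X\varphi^+$ in that basis, and then closes the system for $\nabla_X\varphi^-$ by combining the Dirac equation with the differentiated normalization. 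Your first movement replaces all of this by one geometric observation: $H(\varphi,\varphi)=1$ places $[\varphi]$ in the quadric $Spin(1,3)\subset\HC$, whose tangent space at $q$ is $\Im m\,\HC\cdot q\simeq Cl_2(E\oplus TM)\cdot q$, so $\nabla_X\varphi$ is automatically a $2$-form Clifford-multiplied into $\varphi$, independently of the Dirac equation. The Dirac equation then acts once, purely algebraically, through the degree-$1$ and degree-$3$ pieces of $\sum_j e_j\cdot A(e_j)=\vec H$ in $Cl(E\oplus TM)$, which kill the $\Lambda^2 TM$ and $\Lambda^2 E$ components of $A$ and fix the trace of the mixed part. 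This is shorter, bypasses the case distinction on $h^\pm$, and makes the role of the normalization completely explicit; what it costs is a full reliance on the complex-quaternionic model, which is of course exactly the model the paper sets up, so nothing is lost here.

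Two small bookkeeping slips in your third movement cancel and do not affect the outcome. Each transfer across the pairing via $H(X\cdot\psi,\psi')=-\overline{H(\psi,X\cdot\psi')}$ carries a prefactor $-\overline{(\cdot)}$, so three transfers leave $-\overline{H(\varphi,\ \nu'\cdot e_k\cdot X\cdot\nu\cdot\varphi)}$ rather than $H(\varphi,\dots)$; it is precisely this prefactor, together with taking $\Re e$, that produces the minus sign in $-\langle X,e_k\rangle\langle\nu',\nu\rangle$. Also, $H(\varphi,\ e_0\cdot e_1\cdot e_2\cdot e_3\cdot\varphi)$ is not $0$ but $-i$: the volume element is $-i\1$ in $\HC$, and $H$ is $\C$-bilinear for that $i$; the $4$-form contribution is purely imaginary and only vanishes after passing to $\langle\cdot,\cdot\rangle=\Re e\,H$. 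Your stated pointwise identity $\langle X\cdot e_k\cdot\nu'\cdot\varphi,\ \nu\cdot\varphi\rangle=-\langle X,e_k\rangle\langle\nu',\nu\rangle$ is nonetheless correct (a direct check in an adapted frame confirms it), and the identification $B=C$ then goes through exactly as you describe.
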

\noindent
Using this lemma, we will prove Theorem \ref{thm1}. The proof of this lemma will be given in Section \ref{secpflem}. \\ \\
{\it Proof of Theorem \ref{thm1}:} 
The equations of Gauss, Codazzi and Ricci appear to be the integrability conditions of (\ref{eqnnablaphi}).  We compute the spinorial curvature of $\varphi$. A direct computation yields
\beqt\label{curv}
\mathcal{R}(X,Y)\varphi=d^{\nabla}\eta(X,Y)\cdot\varphi+\big(\eta(Y)\cdot\eta(X)-\eta(X)\cdot\eta(Y)\big)\cdot\varphi,
\eeqt
where
$$d^{\nabla}\eta(X,Y):=\nabla_X(\eta(Y))-\nabla_Y(\eta(X))-\eta([X,Y]).$$
Here and below we also denote by $\nabla$ the natural connexion on the Clifford bundle $Cl(E\oplus TM)\simeq Cl(E)\hat{\otimes}Cl(M).$
\begin{lem}\label{lem calculs courbure}
Let $(e_0,e_1)$ and $(e_2,e_3)$ be positively oriented and orthonormal bases of $E$ and $TM$ respectively. We have:
\be
\item The left-hand side of \eqref{curv} satisfies
$$\mathcal{R}(e_2,e_3)\varphi=-\frac{1}{2}Ke_2\cdot e_3\cdot\varphi-\frac{1}{2}K_Ne_0\cdot e_1\cdot\varphi.$$
\item The first term of the right-hand side of \eqref{curv} satisfies
 $$d^{\nabla}\eta(X,Y)=-\frac{1}{2}\sum_{j=2}^3e_j\cdot\Big((\widetilde{\nabla}_XB)(Y,e_j)-(\widetilde{\nabla}_YB)(X,e_j)\Big).$$
\item The second term of the right-hand side of \eqref{curv} satisfies
\beQ
\eta(e_3)\cdot\eta(e_2)-\eta(e_2)\cdot\eta(e_3)&=&-\frac{1}{2}\big(\left<B(e_2,e_2),B(e_3,e_3)\right>-|B(e_2,e_3)|^2\big)e_2\cdot e_3\\
&&-\frac{1}{2}\left<\left(S_{e_0}\circ S_{e_1}-S_{e_1}\circ S_{e_0}\right)(e_2),e_3\right>e_0\cdot e_1.\eeQ
\ee
\end{lem}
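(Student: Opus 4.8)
The plan is to prove the three identities of Lemma \ref{lem calculs courbure} by direct computation in an adapted frame, using the dictionary $e_0\simeq i\textit{1},\ e_1\simeq I,\ e_2\simeq J,\ e_3\simeq K$ and the multiplication rules $[X\cdot\varphi]=[X][\varphi]$, $[Y\cdot\varphi]=[Y]\widehat{[\varphi]}$ recalled in Section \ref{notation generale}. For item (1), I would recall that the spinorial curvature $\mathcal{R}(X,Y)$ acting on $\Sigma=\Sigma E\otimes\Sigma M$ splits, by the very definition $\nabla=\nabla^{\Sigma E}\otimes\iid+\iid\otimes\nabla^{\Sigma M}$, into the sum $\mathcal{R}^{\Sigma E}\otimes\iid+\iid\otimes\mathcal{R}^{\Sigma M}$; each factor is the standard curvature of a rank-$2$ bundle, so $\mathcal{R}^{\Sigma M}(e_2,e_3)=-\tfrac12 K\, e_2\cdot_{_M} e_3$ and $\mathcal{R}^{\Sigma E}(e_2,e_3)=-\tfrac12 K_N\, e_0\cdot_{_E} e_1$. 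Translating the Clifford actions $\cdot_{_M}$ and $\cdot_{_E}$ back into the product ``$\cdot$'' on $\Sigma$ (which involves the bar on the $\Sigma E$-factor, but $e_0\cdot e_1$ preserves the $\pm$-grading of $\Sigma E$ so the bar is harmless after squaring up the two factors) gives exactly the stated formula. The sign normalization ($-\tfrac12$) is the standard relation between the curvature of a line/spin bundle and the curvature $2$-form, so I would just quote it.

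For items (2) and (3) the strategy is purely algebraic manipulation of the Clifford bundle $Cl(E\oplus TM)\simeq Cl(E)\widehat\otimes Cl(M)$. In item (2), expanding $d^{\nabla}\eta(X,Y)=\nabla_X(\eta(Y))-\nabla_Y(\eta(X))-\eta([X,Y])$ with $\eta(X)=-\tfrac12\sum_j e_j\cdot B(e_j,X)$, and using that $\nabla$ is a derivation on the Clifford bundle compatible with the metric connections on $E$ and $TM$, all the ``connection coefficient'' terms coming from differentiating the frame $(e_2,e_3)$ reassemble into the covariant derivative $\widetilde\nabla B$ on $T^*M^{\otimes 2}\otimes E$; this is the routine ``the Clifford-bundle connection sees $\widetilde\nabla B$'' computation, so I would set up $\widetilde\nabla_X B$ explicitly and let the frame terms cancel. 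For item (3), I would write $\eta(e_2)=-\tfrac12(e_2\cdot B(e_2,e_2)+e_3\cdot B(e_3,e_2))$ and similarly for $\eta(e_3)$, and compute the commutator $\eta(e_3)\cdot\eta(e_2)-\eta(e_2)\cdot\eta(e_3)$ by multiplying out in $Cl(E\oplus TM)$. Here one uses that tangent vectors $e_2,e_3$ anticommute with each other and with the normal vectors $e_0,e_1$, that $e_i\cdot e_i=\pm1$ with the appropriate signs ($e_2^2=e_3^2=1$, $e_0^2=-1$, $e_1^2=1$), and that a product like $e_2\cdot B(e_2,e_2)\cdot e_3\cdot B(e_3,e_3)$ reorders to $-\,e_2\cdot e_3\cdot B(e_2,e_2)\cdot B(e_3,e_3)$, whose scalar part is $-\,e_2\cdot e_3\,\langle B(e_2,e_2),B(e_3,e_3)\rangle$ and whose $E$-bivector part contributes to the $e_0\cdot e_1$ coefficient. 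Collecting the $e_2\cdot e_3$ terms gives the Gauss curvature combination $\langle B(e_2,e_2),B(e_3,e_3)\rangle-|B(e_2,e_3)|^2$, and collecting the $e_0\cdot e_1$ terms, after rewriting $\langle B(X,Y),\nu\rangle=\langle S_\nu X,Y\rangle$, gives $\langle(S_{e_0}\circ S_{e_1}-S_{e_1}\circ S_{e_0})(e_2),e_3\rangle$.

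I would organize the write-up so that (1) is dispatched in a couple of lines by the curvature-splitting remark, and then (2) and (3) are done in a single adapted frame; working in coordinates in $\HC$ via the brackets $[\cdot]$ makes the bookkeeping of signs mechanical, since everything reduces to quaternion multiplication together with the bar operation. The main obstacle I anticipate is purely the sign accounting in item (3): there are four normal-direction terms whose Clifford bivector parts must be combined correctly into an $e_0\cdot e_1$ term, and one must be careful that the $E$-valued Clifford products $B(e_i,e_j)\cdot B(e_k,e_l)$ decompose as $-\langle B(e_i,e_j),B(e_k,e_l)\rangle + B(e_i,e_j)\wedge B(e_k,e_l)$ (with the minus from the Lorentzian-signature Clifford relation on $E$), and that the commutator structure kills the symmetric (scalar) contributions in the $e_0\cdot e_1$ slot while keeping the antisymmetric ones — which is exactly what produces the commutator $S_{e_0}\circ S_{e_1}-S_{e_1}\circ S_{e_0}$ rather than a sum. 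I would double-check this against the known flat case ($B=0\Rightarrow$ all three terms vanish) and against the hypersurface case (where $E$ has rank $1$ effectively and $K_N=0$) as consistency checks.
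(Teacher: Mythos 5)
Your plan matches the paper's proof in every essential respect: item (1) by splitting the curvature $\mathcal{R}=\mathcal{R}^{\Sigma E}\otimes\iid+\iid\otimes\mathcal{R}^{\Sigma M}$ and quoting the Ricci identities for $\Sigma M$ and $\Sigma E$; item (2) by expanding $d^\nabla\eta$ in a frame parallel at a point and letting the frame terms reassemble into $\widetilde\nabla B$; item (3) by brute-force Clifford multiplication with $B_{ij}=B(e_i,e_j)$ and separating the $e_2\cdot e_3$ and $e_0\cdot e_1$ coefficients. Be careful, though, with a sign slip in your sketch of (3): the paper's Clifford map (\ref{Clifford map}) gives $v\cdot v=-\langle v,v\rangle$, hence $e_0^2=+1$ and $e_1^2=e_2^2=e_3^2=-1$ — not the signs you wrote — and this is consistent with the minus sign you correctly used in the identity $B_{ij}\cdot B_{kl}=-\langle B_{ij},B_{kl}\rangle+B_{ij}\wedge B_{kl}$; the $j=k$ terms in the commutator do not cancel, so getting $e_j^2$ wrong would flip the $e_0\cdot e_1$ coefficient and spoil the Ricci term.
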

{\it Proof:} We first compute $\mathcal{R}(e_2,e_3)\varphi$. We recall that $\Sigma=\Sigma E\otimes\Sigma M$ and assume that $\varphi=\alpha\otimes\sigma$ with $\alpha\in\Sigma E$ and $\sigma\in\Sigma M$. Thus,
$$\mathcal{R}(e_2,e_3)\varphi=\mathcal{R}^E(e_2,e_3)\alpha\otimes\sigma+\alpha\otimes\mathcal{R}^M(e_2,e_3)\sigma.$$
Moreover, by the Ricci identity on $M$, we have 
$$\mathcal{R}^M(e_2,e_3)\sigma=-\frac{1}{2}Ke_2\cdot e_3\cdot\sigma,$$
where $K$ is the Gauss curvature of $(M,g)$. Similarly, we have
$$\mathcal{R}^E(e_2,e_3)\alpha=-\frac{1}{2}K_Ne_0\cdot e_1\cdot\alpha,$$
where $K_N$ is the curvature of the connection on $E.$ These last two relations give the first claim of the lemma. For the second claim of the lemma, we choose $e_j$ so that at $p\in M$, $\nabla {e_j}_{|p}=0$. Then, we have
\beQ
d^{\nabla}\eta(X,Y)&=&\nabla_X(\eta(Y))-\nabla_Y(\eta(X))-\eta([X,Y])\\
&=&\sum_{j=2,3}-\frac{1}{2}\nabla_X\big(e_j\cdot B(Y,e_j)\big)+\frac{1}{2}\nabla_Y\big(e_j\cdot B(X,e_j)\big)+\frac{1}{2}e_j\cdot B([X,Y],e_j)\\
&=&\sum_{j=2,3}-\frac{1}{2}e_j\cdot(\widetilde{\nabla}_XB)(Y,e_j)+\frac{1}{2}e_j\cdot(\widetilde{\nabla}_YB)(X,e_j)+\frac{1}{2}e_j\cdot B([X,Y],e_j)\\
&&\hspace{.5cm}-\frac{1}{2}e_j\cdot B(\nabla_XY,e_j)+\frac{1}{2}e_j\cdot B(\nabla_YX,e_j)\\
&=&\sum_{j=2,3}-\frac{1}{2}e_j\cdot\Big((\widetilde{\nabla}_XB)(Y,e_j)-(\widetilde{\nabla}_YB)(X,e_j)\Big)
\eeQ
since $\nabla_XY-\nabla_YX=[X,Y].$ We now prove the third assertion of the lemma. In order to simplify the notation, we set $B(e_i,e_j)=B_{ij}$. We have
\beQ
\eta(e_3)\cdot\eta(e_2)-\eta(e_2)\cdot\eta(e_3)&=&\frac{1}{4}\sum_{j,k=2}^{3}e_j\cdot B_{3j}\cdot e_k\cdot B_{2k}-\frac{1}{4}\sum_{j,k=2}^{3}e_j\cdot B_{2j}\cdot e_k\cdot B_{3k}\eeQ
\beQ
&=&\frac{1}{4}\Bigg[e_2\cdot B_{32}\cdot e_2\cdot B_{22}+e_2\cdot B_{32}\cdot e_3\cdot B_{23}+e_3\cdot B_{33}\cdot e_2\cdot B_{22}+e_3\cdot B_{33}\cdot e_3\cdot B_{23}\\
&&-e_2\cdot B_{22}\cdot e_2\cdot B_{32}-e_2\cdot B_{22}\cdot e_3\cdot B_{33}-e_3\cdot B_{23}\cdot e_2\cdot B_{32}-e_3\cdot B_{23}\cdot e_3\cdot B_{33}\Bigg]\\
&=&-\frac{1}{2}\Big[\left<B_{22},B_{33}\right>-|B_{23}|^2\Big]e_2\cdot e_3-\frac{1}{4}\Big[B_{22}\cdot B_{23}-B_{23}\cdot B_{22}+B_{23}\cdot B_{33}-B_{33}\cdot B_{23}\Big].
\eeQ
Now, if we write $B_{ij}=B_{ij}^0e_0+B_{ij}^1e_1$, and setting
$$A:=B_{22}\cdot B_{23}-B_{23}\cdot B_{22}+B_{23}\cdot B_{33}-B_{33}\cdot B_{23},$$
we have
\beQ
A&=&2\Big(B_{22}^0B_{23}^1-B_{23}^0B_{22}^1+B_{23}^0B_{33}^1-B_{33}^0B_{23}^1\Big)e_0\cdot e_1\\
&=&2\left<\left(S_{e_0}\circ S_{e_1}-S_{e_1}\circ S_{e_0}\right)(e_2),e_3\right>e_0\cdot e_1,
\eeQ
since for $k\in\{2,3\}$, we have 
$$S_{e_0}e_k=-B_{k2}^0e_2-B_{k3}^0e_3\hspace{1cm}\mbox{and}\hspace{1cm}S_{e_1}e_k=B_{k2}^1e_2+B_{k3}^1e_3.$$
$\finpreuve$\\
The following lemma permits to finish the proof:
\begin{lem}\label{order2}
If $T=\sum_{0\leq i<j\leq 3}t_{ij}e_i\cdot e_j$ is an element of order 2 of $\mathcal{C}l(E)\hat{\otimes}\mathcal{C}l(M)$ so that $T\cdot\varphi=0,$ where $\varphi\in\Gamma(\Sigma)$ is a spinor field such that $H(\varphi,\varphi)\neq 0,$ then $T=0$.
\end{lem}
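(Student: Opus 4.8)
The plan is to pass to the complex-quaternion coordinates of Section~\ref{notation generale} and reduce the statement to a triviality about invertible elements of $\HC$. Since $T$ is of order $2$ it is even, hence $T\in Cl_0(E\oplus TM)$, which is identified with $\HC$ through \eqref{identification Cl0}. Fix a local frame $\tilde s\in\tilde Q$ and write $[T]\in\HC$ for the coordinate of $T$ and $[\varphi]\in\HC$ for the coordinate of $\varphi$. By the rule $[X\cdot\varphi]=[X][\varphi]$ valid for $X\in Cl_0(E\oplus TM)$ (Section~\ref{notation generale}), the hypothesis $T\cdot\varphi=0$ becomes the single identity $[T][\varphi]=0$ in $\HC$.

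Next I would read off $H(\varphi,\varphi)$ in the same coordinates. By the definitions of Section~\ref{twisted spinor bundle}, $H(\varphi,\varphi)$ is the coefficient of $\1$ in $\langle\langle\varphi,\varphi\rangle\rangle=\overline{[\varphi]}\,[\varphi]$; combined with the elementary identity $\bar q\,q=(q_0^2+q_1^2+q_2^2+q_3^2)\,\1$, which holds for every $q=q_0\1+q_1I+q_2J+q_3K\in\HC$, this gives $\overline{[\varphi]}\,[\varphi]=H(\varphi,\varphi)\,\1$. Hence the assumption $H(\varphi,\varphi)\neq 0$ says exactly that $[\varphi]$ is invertible in $\HC$, with inverse $H(\varphi,\varphi)^{-1}\,\overline{[\varphi]}$. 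Multiplying $[T][\varphi]=0$ on the right by $[\varphi]^{-1}$ then yields $[T]=0$, and since the correspondence $Cl_0(E\oplus TM)\simeq\HC$ is an isomorphism (in particular injective), we conclude $T=0$.

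Since the argument is this short there is no real obstacle; the only points that need attention are bookkeeping ones --- checking that an order-$2$ element genuinely sits inside $Cl_0$ so that $[T]\in\HC$ and the product rule $[T\cdot\varphi]=[T][\varphi]$ apply, and remembering that for complex quaternions invertibility is governed precisely by the non-vanishing of $q_0^2+q_1^2+q_2^2+q_3^2$, which is what $H$ computes on the diagonal. A fully equivalent route, if one prefers to sidestep the quaternionic ``norm'', is to use the algebra isomorphism $\HC\simeq\mathcal M_2(\C)$: under it $H(\varphi,\varphi)$ becomes $\det[\varphi]$, so $[\varphi]$ is an invertible matrix exactly when $H(\varphi,\varphi)\neq 0$, and a matrix annihilated on the right by an invertible matrix must vanish.
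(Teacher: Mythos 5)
Your proof is correct and coincides with the paper's: both pass to $\HC$-coordinates via a local section of $\tilde Q$, note that $T\cdot\varphi=0$ becomes $[T][\varphi]=0$, and conclude from the invertibility of $[\varphi]$ (equivalent to $H(\varphi,\varphi)=\overline{[\varphi]}[\varphi]\neq 0$) that $[T]=0$.
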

\noindent
{\it Proof:} 
Recalling the notation introduced in Section \ref{notation generale}, in some local section $\tilde{s}$ of $\tilde{Q},$ $T$ is represented by $q\in\HC,$ and $\varphi$ by $\xi\in\HC;$ thus $T\cdot\varphi$ is represented by $q\xi.$ If $T\cdot\varphi=0,$ we have $q\xi=0;$ since $\xi$ is invertible in $\HC$ ($\overline{\xi}\xi=H(\varphi,\varphi)\neq 0$), we get $q=0,$ and the result. 
$\finpreuve$\\
We deduce from \eqref{curv}, Lemma \ref{lem calculs courbure} and Lemma \ref{order2} that
$$\left\{
\begin{array}{l}
K=\left<B(e_2,e_2),B(e_3,e_3)\right>-|B(e_2,e_3)|^2,\\ \\
K_N=\left<\left(S_{e_0}\circ S_{e_1}-S_{e_1}\circ S_{e_0}\right)(e_2),e_3\right>,\\ \\
(\widetilde{\nabla}_XB)(Y,e_j)-(\widetilde{\nabla}_YB)(X,e_j)=0,\quad\forall j=2,3,
\end{array}\right.$$
which are respectively Gauss, Ricci and Codazzi equations. $\finpreuve$\\
From Theorem \ref{thm1} and the fundamental theorem of submanifolds (see Remark \ref{rmq th fal}), we get that a spinor field solution of (\ref{dirac equation}) defines a local isometric immersion of $M$ into $\R^{1,3}$ with normal bundle $E$ and second fundamental form $B;$ we deduce the following corollary:
\begin{cor}\label{corollary first integration}
Let $(M^2,g)$ be an oriented surface and let $E$ be a space- and time-oriented Lorentzian vector bundle of rank $2$ on $M$ equipped with a compatible connection, with given spin structures. Let $\vec{H}$ be a section of $E$. The three following statements are equivalent.
\begin{enumerate}
\item There exists a local spinor field $\varphi\in\Gamma(\Sigma)$ with $H(\varphi,\varphi)=1$ solution of the Dirac equation
$$D\varphi=\vec{H}\cdot\varphi.$$
\item There exists a local spinor field $\varphi\in\Gamma(\Sigma)$ with $H(\varphi,\varphi)=1$ solution of 
$$\nabla_X\varphi=-\frac{1}{2}\sum_{j=2,3}e_j\cdot B(X,e_j) \cdot\varphi, $$
where $B:TM\times TM\lgra E$ is bilinear and $\frac{1}{2}\trace(B)=\vec{H}.$
\item There exists a local (spacelike) isometric immersion of $(M^2,g)$ into $\R^{1,3}$ with normal bundle $E$, second fundamental form $B$ and mean curvature $\vec{H}$.
\end{enumerate}
The form $B$ and the spinor field $\varphi$ are linked by (\ref{B function phi}). Moreover, if $M$ is simply connected, the spinor field $\varphi$ and the isometric immersion in the equivalent statements (1), (2) and (3) are defined globally on $M.$ The first part of Theorem \ref{th main result} follows.
\end{cor}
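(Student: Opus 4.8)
The plan is to prove $(1)\Leftrightarrow(2)$ by a direct trace computation together with Lemma \ref{lem1} and Theorem \ref{thm1}, to prove $(1)\Leftrightarrow(3)$ by using Theorem \ref{thm1} in one direction and the restriction of a parallel spinor of $\R^{1,3}$ in the other, and finally to upgrade the three local statements to global ones using the simple connectedness of $M$. Throughout, $B$ is the symmetric bilinear map linked to $\varphi$ by \eqref{B function phi}.

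\emph{The equivalence $(1)\Leftrightarrow(2)$.} For $(2)\Rightarrow(1)$ one takes the trace of the equation in $(2)$: if $\nabla_X\varphi=-\frac{1}{2}\sum_{j}e_j\cdot B(X,e_j)\cdot\varphi$, then
$D\varphi=\sum_k e_k\cdot\nabla_{e_k}\varphi=-\frac{1}{2}\sum_{j,k}e_k\cdot e_j\cdot B(e_k,e_j)\cdot\varphi$;
the off-diagonal terms cancel in pairs (since $e_k\cdot e_j=-e_j\cdot e_k$ and $B(e_k,e_j)=B(e_j,e_k)$), while the diagonal terms give $\frac{1}{2}\sum_k B(e_k,e_k)\cdot\varphi=\frac{1}{2}\trace(B)\cdot\varphi=\vec H\cdot\varphi$ because $e_k\cdot e_k=-1$; the normalization $H(\varphi,\varphi)=1$ is untouched. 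This is exactly the computation recalled in Section \ref{preliminaries}. For $(1)\Rightarrow(2)$, assume $\varphi\in\Gamma(\Sigma)$ with $H(\varphi,\varphi)=1$ solves the Dirac equation. Theorem \ref{thm1} then furnishes the symmetric bilinear map $B$ of \eqref{B function phi} with $\frac{1}{2}\trace(B)=\vec H$, and Lemma \ref{lem1} asserts precisely that $\varphi$ solves $\nabla_X\varphi=\eta(X)\cdot\varphi$ with $\eta(X)=-\frac{1}{2}\sum_j e_j\cdot B(e_j,X)=-\frac{1}{2}\sum_j e_j\cdot B(X,e_j)$, which is the equation in $(2)$.

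\emph{The equivalence $(1)\Leftrightarrow(3)$.} For $(1)\Rightarrow(3)$, Theorem \ref{thm1} produces, from a Dirac solution $\varphi$, a symmetric bilinear $B$ satisfying the Gauss, Codazzi and Ricci equations with $\frac{1}{2}\trace(B)=\vec H$; the classical fundamental theorem of submanifolds (Remark \ref{rmq th fal}) then gives a local isometric immersion of $(M,g)$ into $\R^{1,3}$ with normal bundle $E$ and second fundamental form $B$, necessarily spacelike since $g$ is Riemannian, and with mean curvature vector $\frac{1}{2}\trace(B)=\vec H$. For $(3)\Rightarrow(1)$, which is the only genuinely new ingredient, start from an immersion $F$ as in $(3)$ and choose a parallel spinor field $\varphi_0$ of $\R^{1,3}$ with $H(\varphi_0,\varphi_0)=1$; such spinors exist because $\R^{1,3}$ is flat and, in the complex quaternion model of Section \ref{basic facts}, the parallel spinors form a copy of $\HC$ on which $H$ is the nondegenerate form \eqref{H HC} (e.g. $\varphi_0\simeq\1$). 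Via the identification $\Sigma\R^{1,3}_{|M}\simeq\Sigma E\otimes\Sigma M=\Sigma$ and the spinorial Gauss formula recalled in Section \ref{preliminaries}, the restriction $\varphi$ of $\varphi_0$ to $M$ satisfies $\overline{\nabla}\varphi=0$, hence $\nabla_X\varphi=-\frac{1}{2}\sum_j e_j\cdot B(X,e_j)\cdot\varphi$ with $B$ the second fundamental form of $F$; taking the trace gives $D\varphi=\vec H\cdot\varphi$ with $\vec H$ the mean curvature of $F$, and since $\overline{\nabla}$ is compatible with $H$ the function $H(\varphi,\varphi)$ is constant on $M$, equal to $H(\varphi_0,\varphi_0)=1$. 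Thus $(1)$ holds, and $(1)$, $(2)$, $(3)$ are all linked by \eqref{B function phi}.

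\emph{Globalization and the main difficulty.} The arguments for $(1)\Leftrightarrow(2)$ are pointwise and carry over verbatim to globally defined fields. When $M$ is simply connected, a global solution $\varphi$ of the Dirac equation yields, by Theorem \ref{thm1}, a bilinear map $B$ defined on all of $M$ and satisfying the Gauss, Codazzi and Ricci equations everywhere, so the global form of the fundamental theorem of submanifolds gives a global immersion $F:M\to\R^{1,3}$; conversely, the construction in $(3)\Rightarrow(1)$ yields a global $\varphi$ from a global $F$ since $\varphi_0$ is defined on all of $\R^{1,3}$. This gives the equivalences forming the first part of Theorem \ref{th main result}. The step requiring real care is $(3)\Rightarrow(1)$: it relies on the precise identification of $\Sigma\R^{1,3}_{|M}$ with the twisted bundle $\Sigma E\otimes\Sigma M$, on the spinorial Gauss formula, and on the $H$-compatibility of the ambient spin connection, all recorded in Section \ref{preliminaries}; once these are granted, everything else is bookkeeping resting on Theorem \ref{thm1}, Lemma \ref{lem1} and the classical fundamental theorem of submanifolds.
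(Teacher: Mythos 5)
Your proposal is correct and follows essentially the same path the paper intends: $(1)\Leftrightarrow(2)$ via the trace computation and Lemma \ref{lem1}, $(1)\Rightarrow(3)$ via Theorem \ref{thm1} together with the classical fundamental theorem of submanifolds (Remark \ref{rmq th fal}), and $(3)\Rightarrow(1)$ by restricting a parallel spinor of $\R^{1,3}$, using the spinorial Gauss formula and the identification $\Sigma\R^{1,3}_{|M}\simeq\Sigma E\otimes\Sigma M$ from Section \ref{preliminaries}. The paper leaves most of this implicit in a single sentence before the corollary; you have simply made each step explicit, including the (correct, but worth noting as an implicit assumption of the statement) reliance on the symmetry of $B$ in the off-diagonal cancellation of the trace step, and the globalization via simple connectedness, all consistent with the paper.
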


\section{Proof of Lemma \ref{lem1}}\label{secpflem}
The main ideas of the proof amount to Friedrich \cite{Fr}. It will be convenient to consider here the complex structure $i:=-e_0\cdot e_1\cdot e_2\cdot e_3$ defined on the Clifford bundle $Cl(E\oplus TM)$ by the multiplication on the left, and on the spinor bundle $\Sigma$ by the Clifford action. The map $H:\Sigma\times\Sigma\rightarrow\C$ is $\C-$bilinear with respect to this complex structure (see Section \ref{twisted spinor bundle}), whereas the Clifford action of vectors belonging to $E\oplus TM$ is antilinear: it satisfies
$$i(x\cdot\varphi)=(ix)\cdot\varphi=-x\cdot(i\varphi)$$
for all $x\in E\oplus TM$ and all $\varphi\in\Sigma.$ In the decomposition
\begin{equation}\label{dec sigma}
\Sigma E\otimes\Sigma M\hspace{.5cm} =\hspace{.5cm} \Sigma E\otimes\Sigma^+M\hspace{.5cm} \oplus\hspace{.5cm} \Sigma E\otimes\Sigma^-M
\end{equation}
the spinor field $\varphi$ splits into
\begin{equation}\label{dec phi}
\varphi\hspace{.2cm}=\hspace{.2cm}\varphi^+\hspace{.2cm}+\hspace{.2cm}\varphi^-.
\end{equation}
Note that here, and in this section only, the decomposition (\ref{dec phi}) is not the decomposition in $\Sigma=\Sigma^+\oplus\Sigma^-,$ but really in (\ref{dec sigma}). We recall (see Section \ref{twisted spinor bundle}) that $H$ vanishes on the bundles $\Sigma^{\pm}E\otimes\Sigma^{\pm}M,$ and that the bundles $\Sigma E\otimes\Sigma^+M$ and  $\Sigma E\otimes\Sigma^-M$ are orthogonal with respect to $H.$ Let us define $h^+$ and $h^-$ in $\C$ such that
$${h^+}^2:=H(\varphi^+,\varphi^+)\hspace{1cm}\mbox{ and }\hspace{1cm}{h^-}^2:=H(\varphi^-,\varphi^-).$$
Since $H(\varphi,\varphi)={h^+}^2+{h^{-}}^2=1,$ we have $h^+$ or $h^-\neq 0;$ interchanging the roles of $h^+$ and $h^-$ if necessary, we may suppose without loss of generality that $h^-\neq 0.$ Let $(e_0,e_1)$ be a positively oriented basis of $E$ such that $e_0$ is future-directed; setting
$$e:=\frac{\sqrt{2}}{2}(e_0-e_1)\hspace{1cm}\mbox{and}\hspace{1cm}e':=\frac{\sqrt{2}}{2}(e_0+e_1),$$
we get 
\begin{equation}\label{properties e}
e\cdot e=e'\cdot e'=0\hspace{1cm}\mbox{and}\hspace{1cm}e\cdot e'-e_0\cdot e_1=1.
\end{equation} 
The spinors $e_2\cdot e\cdot\frac{\varphi^-}{h^-}$ and $e_2\cdot e'\cdot\frac{\varphi^-}{h^-}$ belong to $\Sigma E\otimes\Sigma^+M$ and, from (\ref{H clifford}) and (\ref{properties e}), satisfy 
\begin{equation*}
H\left(e_2\cdot e\cdot\frac{\varphi^-}{h^-},e_2\cdot e\cdot\frac{\varphi^-}{h^-}\right)=H\left(e_2\cdot e'\cdot\frac{\varphi^-}{h^-},e_2\cdot e'\cdot\frac{\varphi^-}{h^-}\right)=0
\end{equation*}
and
\begin{equation*}
H\left(e_2\cdot e\cdot\frac{\varphi^-}{h^-},e_2\cdot e'\cdot\frac{\varphi^-}{h^-}\right)=-1.
\end{equation*}
For the last identity we also use that $H(\varphi^-,e_0\cdot e_1\cdot\varphi^-)=0,$ which is in turn easily obtained using the decomposition 
$$\varphi^-=\varphi^{+-}+\varphi^{--}\hspace{.5cm}\in\hspace{.5cm}\Sigma^+E\otimes\Sigma^-M\ \ \oplus\ \ \Sigma^-E\otimes\Sigma^-M$$ 
together with $e_0\cdot e_1\cdot\varphi^{\pm -}=\pm\varphi^{\pm -}.$
In particular these spinors form a basis of $\Sigma E\otimes\Sigma^+M$ over $\C,$ and we have
\begin{equation*}
\nabla_X\varphi^+=-H\left(\nabla_X\varphi^+,e_2\cdot e'\cdot\frac{\varphi^-}{h^-}\right)e_2\cdot e\cdot\frac{\varphi^-}{h^-}-H\left(\nabla_X\varphi^+,e_2\cdot e\cdot\frac{\varphi^-}{h^-}\right)e_2\cdot e'\cdot\frac{\varphi^-}{h^-},
\end{equation*}
and thus, setting
\begin{equation}\label{def eta}
\eta(X)=-\frac{1}{{h^-}^2}\left\{H(\nabla_X\varphi^+,e_2\cdot e'\cdot\varphi^-)e_2\cdot e+H(\nabla_X\varphi^+,e_2\cdot e\cdot\varphi^-)e_2\cdot e'\right\},
\end{equation}
we get
\begin{eqnarray}\label{nabla phi+ eta p}
\nabla_X\varphi^+&=&\eta(X)\cdot\varphi^-.
\end{eqnarray}
Since
\begin{eqnarray*}
D\varphi^+&=&\vec{H}\cdot\varphi^-\\
&=&(e_2\cdot\eta(e_2)+e_3\cdot\eta(e_3))\cdot\varphi^-
\end{eqnarray*} 
and since $H(\varphi^-,\varphi^-)\neq 0,$ we obtain
\begin{equation}\label{eta H}
e_2\cdot\eta(e_2)+e_3\cdot\eta(e_3)=\vec{H};
\end{equation}
we use here a property similar to Lemma \ref{order2}, but for elements of odd degree in the Clifford algebra $Cl(E\oplus TM).$ Differentiating $H(\varphi^+,\varphi^+)+H(\varphi^-,\varphi^-)=1,$ we get
\begin{equation*}
0=H(\nabla_X\varphi^+,\varphi^+)+H(\nabla_X\varphi^-,\varphi^-)=H(\eta(X)\cdot\varphi^-,\varphi^+)+H(\nabla_X\varphi^-,\varphi^-).
\end{equation*}
Since $H(\eta(X)\cdot\varphi^-,\varphi^+)=-H(\varphi^-,\eta(X)\cdot\varphi^+)$ (by (\ref{H clifford})), we deduce
\begin{equation*}
H(\nabla_X\varphi^--\eta(X)\cdot\varphi^+,\varphi^-)=0,
\end{equation*}
i.e., since $(\varphi^-,e_0\cdot e_1\cdot\varphi^-)$ is a basis of $\Sigma E\otimes\Sigma^-M$ which is $H-$orthogonal,
\begin{equation*}
\nabla_X\varphi^--\eta(X)\cdot\varphi^+=\mu(X)e_0\cdot e_1\cdot\varphi^-
\end{equation*}
for some 1-form $\mu$ with complex values. Since $D\varphi^-=\vec{H}\cdot\varphi^+$ and by (\ref{eta H}) we get
\begin{equation*}
\left(\overline{\mu(e_2)}e_2\cdot e_0\cdot e_1+\overline{\mu(e_3)}e_3\cdot e_0\cdot e_1\right)\cdot\varphi^-=0
\end{equation*} 
and thus $\mu(e_2)=\mu(e_3)=0,$ i.e. $\mu=0.$ We thus get $\nabla_X\varphi^-=\eta(X)\cdot\varphi^+,$ which, together with (\ref{nabla phi+ eta p}), implies (\ref{eqnnablaphi}).
\\

We finally prove (\ref{relation eta B}): by (\ref{B function phi}) and since $\nabla_X\varphi=\eta(X)\cdot\varphi,$ we have
\begin{equation*}
\langle B(e_j,X),\nu\rangle=2\langle e_j\cdot\nabla_X\varphi,\nu\cdot\varphi\rangle
=2\langle e_j\cdot\eta(X)\cdot\varphi,\nu\cdot\varphi\rangle
\end{equation*}
for all $\nu\in E$ and for $j=2,3.$ We fix $X\in TM.$ By (\ref{def eta}), $\eta(X)$ may be written in the form
\begin{equation}\label{ecriture a priori eta}
\eta(X)=\eta_2e_2\cdot \nu_2+\eta_3e_3\cdot\nu_3,
\end{equation}
with $\eta_2,\eta_3\in\R$ and $\nu_2,\nu_3\in E.$ Thus
\begin{equation}\label{equation lemme B eta}
\langle B(e_2,X),\nu\rangle=-2\eta_2\langle\nu_2\cdot\varphi,\nu\cdot\varphi\rangle+2\eta_3\langle e_2\cdot e_3\cdot\nu_3\cdot\varphi,\nu\cdot\varphi\rangle.
\end{equation}
\begin{lem}
For all $\nu,\nu'\in E,$
$$\langle\nu\cdot\varphi,\nu'\cdot\varphi\rangle=\langle\nu,\nu'\rangle\hspace{1cm}\mbox{and}\hspace{1cm}\langle e_2\cdot e_3\cdot\varphi,\nu\cdot\nu'\cdot\varphi\rangle=0.$$
\end{lem}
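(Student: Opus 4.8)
The plan is to reduce both identities to the algebraic properties of $H$ collected in Section~\ref{twisted spinor bundle}: its symmetry, the relation $H(X\cdot\psi,\psi')=-\overline{H(\psi,X\cdot\psi')}$ for $X\in E\oplus TM$, its $\C$-bilinearity for the complex structure $i=-e_0\cdot e_1\cdot e_2\cdot e_3$, and the normalization $H(\varphi,\varphi)=1$. Since $\langle.,.\rangle=\pre\,H$, it is enough to evaluate the two complex numbers $H(\nu\cdot\varphi,\nu'\cdot\varphi)$ and $H(e_2\cdot e_3\cdot\varphi,\nu\cdot\nu'\cdot\varphi)$ and take real parts.

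First I would establish two auxiliary facts. For any orthogonal pair $X,Y\in E\oplus TM$ one has $H(\varphi,X\cdot Y\cdot\varphi)=0$: applying $H(X\cdot\psi,\psi')=-\overline{H(\psi,X\cdot\psi')}$ twice and using $Y\cdot X=-X\cdot Y$ gives $H(X\cdot Y\cdot\varphi,\varphi)=-H(\varphi,X\cdot Y\cdot\varphi)$, while the symmetry of $H$ gives $H(X\cdot Y\cdot\varphi,\varphi)=H(\varphi,X\cdot Y\cdot\varphi)$, so both vanish; in particular this holds for $X\cdot Y=e_0\cdot e_1$ and for $X\cdot Y=e_2\cdot e_3$. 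Also, since $(e_2\cdot e_3)\cdot(e_2\cdot e_3)$ acts as $-\mathrm{Id}$ on $\Sigma$, the definition $i=-e_0\cdot e_1\cdot e_2\cdot e_3$ yields the identity $e_0\cdot e_1\cdot\varphi=i\,e_2\cdot e_3\cdot\varphi$, and the same type of manipulation shows $H(e_2\cdot e_3\cdot\varphi,e_2\cdot e_3\cdot\varphi)=H(\varphi,\varphi)=1$, so in particular this number is real.

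For the identity $\langle\nu\cdot\varphi,\nu'\cdot\varphi\rangle=\langle\nu,\nu'\rangle$, I would use the Clifford decomposition in $E$, namely $\nu\cdot\nu'=-\langle\nu,\nu'\rangle\,\1+c\,e_0\cdot e_1$ for some $c\in\R$ (the bivector part is a real multiple of $e_0\cdot e_1$ because $E$ has rank $2$), and compute
\[
H(\nu\cdot\varphi,\nu'\cdot\varphi)=-\overline{H(\varphi,\nu\cdot\nu'\cdot\varphi)}=-\overline{\bigl(-\langle\nu,\nu'\rangle\,H(\varphi,\varphi)+c\,H(\varphi,e_0\cdot e_1\cdot\varphi)\bigr)}=\langle\nu,\nu'\rangle,
\]
using $H(\varphi,\varphi)=1\in\R$ and the vanishing of $H(\varphi,e_0\cdot e_1\cdot\varphi)$; taking the real part finishes this case. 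For the identity $\langle e_2\cdot e_3\cdot\varphi,\nu\cdot\nu'\cdot\varphi\rangle=0$, the same decomposition gives $H(e_2\cdot e_3\cdot\varphi,\nu\cdot\nu'\cdot\varphi)=-\langle\nu,\nu'\rangle\,H(e_2\cdot e_3\cdot\varphi,\varphi)+c\,H(e_2\cdot e_3\cdot\varphi,e_0\cdot e_1\cdot\varphi)$; the first term vanishes by the auxiliary fact, and in the second I substitute $e_0\cdot e_1\cdot\varphi=i\,e_2\cdot e_3\cdot\varphi$ and use $\C$-bilinearity to get $c\,i\,H(e_2\cdot e_3\cdot\varphi,e_2\cdot e_3\cdot\varphi)=c\,i$, a purely imaginary number, whose real part is $0$.

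This is a short formal computation with no genuine obstacle; the points needing care are to track the complex conjugations so that the reality of $H(\varphi,\varphi)=1$ is used where it matters, to note that the bivector component of $\nu\cdot\nu'$ lies in the line spanned by $e_0\cdot e_1$ (this is where $\mathrm{rk}\,E=2$ enters, and is what makes the single scalar normalization $H(\varphi,\varphi)=1$ sufficient), and to keep the complex structure $i=-e_0\cdot e_1\cdot e_2\cdot e_3$ consistent both as the structure for which $H$ is $\C$-bilinear and in the identity $e_0\cdot e_1\cdot\varphi=i\,e_2\cdot e_3\cdot\varphi$.
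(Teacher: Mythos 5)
Your proof is correct, but it follows a genuinely different route from the paper's, and it is worth contrasting. For the first identity, the paper simply uses the Clifford anticommutation $\nu\cdot\nu'=-\nu'\cdot\nu-2\langle\nu,\nu'\rangle$ to move $\nu$ past $\nu'$ and invokes the symmetry of the real form $\langle.,.\rangle$ together with $\langle\varphi,\varphi\rangle=1$; no auxiliary vanishing of $H(\varphi,e_0\cdot e_1\cdot\varphi)$ is needed. For the second identity, the paper drops to $\HC$-coordinates in an adapted frame $\tilde{s}$: it writes $[\nu\cdot\nu']=a\1+ibI$ with $a,b\in\R$, computes $\langle\langle e_2\cdot e_3\cdot\varphi,\nu\cdot\nu'\cdot\varphi\rangle\rangle$ explicitly, and reads off that the $\1$-coefficient is $ib$, hence purely imaginary. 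You instead use one uniform decomposition $\nu\cdot\nu'=-\langle\nu,\nu'\rangle\1+c\,e_0\cdot e_1$ for both identities, establish $H(\varphi,X\cdot Y\cdot\varphi)=0$ for orthogonal $X,Y$ by an intrinsic double-application of the Clifford-adjoint relation plus symmetry of $H$, and handle the bivector term via $\C$-bilinearity of $H$ and the on-spinor identity $e_0\cdot e_1\cdot\varphi=i\,e_2\cdot e_3\cdot\varphi$. Your version stays entirely coordinate-free and yields the slightly sharper fact that $H(\nu\cdot\varphi,\nu'\cdot\varphi)=\langle\nu,\nu'\rangle$ as a complex identity, not merely after taking real parts; the paper's frame computation is arguably the shortest path once the $\HC$-bracket formalism of Section~\ref{notation generale} is in place, but it is tied to the choice of adapted frame. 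Both are complete proofs, and the logical dependencies you invoke ($H$ symmetric, $H(X\cdot\psi,\psi')=-\overline{H(\psi,X\cdot\psi')}$, $\C$-bilinearity of $H$ for $i=-e_0\cdot e_1\cdot e_2\cdot e_3$, and $\mathrm{rk}\,E=2$) are all correctly sourced from Section~\ref{twisted spinor bundle}.
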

\begin{proof}
To prove the first identity, we just observe that
$$\langle\nu\cdot\varphi,\nu'\cdot\varphi\rangle=-\langle\varphi,\nu\cdot\nu'\cdot\varphi\rangle=-\langle\nu'\cdot\varphi,\nu\cdot\varphi\rangle+2\langle\nu,\nu'\rangle$$
since $\nu\cdot\nu'=-\nu'\cdot\nu-2\langle\nu,\nu'\rangle$ and $\langle\varphi,\varphi\rangle=1.$ We now prove the second formula. Recall the definition (\ref{def Q tilde}) of $\tilde{Q},$ and denote by $\pi:\tilde{Q}\rightarrow Q_1\times_M Q_2$ the natural projection. In a local section $\tilde{s}$ of $\tilde{Q}$ such that $\pi(\tilde{s})=(e_0,e_1,e_2,e_3),$ $e_2\cdot e_3$ is represented by $I\in\HC$ and $\nu\cdot\nu'$ by $a\1+ibI,$ with $a,b\in\R;$ see Section \ref{notation generale}. Thus
$$\langle\langle e_2\cdot e_3\cdot\varphi,\nu\cdot\nu'\cdot\varphi\rangle\rangle=\overline{(a\1+ibI)[\varphi]}I[\varphi]=\overline{[\varphi]}(ib\1+aI)[\varphi]=ib\1+a\overline{[\varphi]}I[\varphi],$$
where $[\varphi]\in\HC$ represents $\varphi$ in $\tilde{s}.$ The last term $u:=a\overline{[\varphi]}I[\varphi]$ is a linear combination of $I,J$ and $K,$ since $\overline{u}=-u.$ Thus $H(e_2\cdot e_3\cdot\varphi,\nu\cdot\nu'\cdot\varphi)=ib\in i\R,$ and, since $\langle.,.\rangle=\Re e\ H,$ we obtain the second identity of the lemma.
\end{proof}
The lemma implies that the second term in (\ref{equation lemme B eta}) vanishes, and that (\ref{equation lemme B eta}) reduces to $\eta_2\nu_2=-\frac{1}{2}B(e_2,X);$ similarly, $\eta_3\nu_3=-\frac{1}{2}B(e_3,X).$ Using these formulas in (\ref{ecriture a priori eta}) we obtain (\ref{relation eta B}).
\section{Weierstrass representation}\label{sec representation}
We explain here how the immersion in Corollary \ref{corollary first integration} may be directly written in terms of the spinor field solution of the Dirac equation. Here, we will not use the fundamental theorem of the theory of surfaces in $\R^{1,3},$ but we will instead obtain a spinorial proof of this theorem. Note that a spinorial representation formula of a conformal immersion of a surface in $\R^{1,3}$ first appeared in \cite{Va} (see Remark \ref{rmk varlamov} below).

Assume that we have a spinor field $\varphi\in\Gamma(\Sigma)$ such that
$$D\varphi=\vec{H}\cdot\varphi\hspace{1cm}\mbox{ and }\hspace{1cm}H(\varphi,\varphi)=1.$$ 
We define the $\HC$-valued $1$-form $\xi$ by
\begin{equation}\label{def xi}
\xi(X):=\langle\langle X\cdot\varphi,\varphi\rangle\rangle\hspace{1cm}\in\hspace{.5cm}\HC,
\end{equation}
where the pairing $\langle\langle.,.\rangle\rangle:\Sigma\times\Sigma\rightarrow\HC$ is defined in (\ref{def scalar HC}). 
\begin{prop}\label{prop fundamental xi}
1- The form $\xi\in\Omega^1(M,\HC)$ satisfies
$$\xi=-\tch{\overline{\xi}},$$
and thus takes its values in $\R^{1,3}\subset\HC.$
\\2- The form $\xi$ is closed:
$$d\xi=0.$$
\end{prop}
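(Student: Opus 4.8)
The plan is to work in a local section $\tilde{s}$ of $\tilde{Q}$ so that both assertions become statements about the $\HC$-valued function $[\varphi]$ representing $\varphi$; the algebraic part (1) is then immediate from the reality condition, and the closedness (2) reduces to a differential identity that follows from the Killing-type equation $\nabla_X\varphi=\eta(X)\cdot\varphi$ of Lemma \ref{lem1}. For part (1), I would use the second property in (\ref{properties pairing}): for $X\in TM\subset E\oplus TM$,
$$\langle\langle X\cdot\varphi,\varphi\rangle\rangle=-\bigtch{\langle\langle\varphi,X\cdot\varphi\rangle\rangle}=-\bigtch{\overline{\langle\langle X\cdot\varphi,\varphi\rangle\rangle}},$$
where the last step uses the first property in (\ref{properties pairing}). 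This is exactly $\xi(X)=-\tch{\overline{\xi(X)}}$, which by the identification $\R^{1,3}\simeq\{\xi\in\HC:\ \xi=-\tch{\overline{\xi}}\}$ recalled in Section \ref{basic facts} shows $\xi$ takes values in $\R^{1,3}$.

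**Closedness.** For part (2) I would fix a point $p\in M$ and a local orthonormal frame $(e_2,e_3)$ of $TM$ with $\nabla e_j|_p=0$, so that at $p$
$$d\xi(e_2,e_3)=e_2\bigl(\xi(e_3)\bigr)-e_3\bigl(\xi(e_2)\bigr)=e_2\bigl(\langle\langle e_3\cdot\varphi,\varphi\rangle\rangle\bigr)-e_3\bigl(\langle\langle e_2\cdot\varphi,\varphi\rangle\rangle\bigr).$$
Differentiating $\langle\langle e_j\cdot\varphi,\varphi\rangle\rangle$ and using $\nabla e_j|_p=0$ together with $\nabla_X\varphi=\eta(X)\cdot\varphi$ (Lemma \ref{lem1}), each term splits into a piece with $\eta$ acting on the left-hand slot and a piece with $\eta$ acting on the right-hand slot; using the compatibility of $\langle\langle.,.\rangle\rangle$ with the Clifford action and with $\nabla$ this rearranges into a Clifford expression of the form $\langle\langle T\cdot\varphi,\varphi\rangle\rangle$ where $T=e_2\cdot\eta(e_3)-e_3\cdot\eta(e_2)+(\text{terms from the right slot})$. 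Substituting $\eta(e_j)=-\tfrac12\sum_k e_k\cdot B(e_k,e_j)$ from (\ref{relation eta B}) and using the symmetry $B(e_k,e_j)=B(e_j,e_k)$ (Theorem \ref{thm1}), the $e_2$-$e_3$ cross terms cancel in pairs, just as the volume-element bookkeeping in Lemma \ref{lem calculs courbure} made the relevant terms collapse; one should be left with $d\xi(e_2,e_3)=0$.

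**Main obstacle.** The routine part is the two-line algebra of (1); the delicate part is keeping track, in the differentiation of $\langle\langle e_j\cdot\varphi,\varphi\rangle\rangle$, of the fact that $\langle\langle.,.\rangle\rangle$ is conjugate-symmetric with a hat (\ref{properties pairing}) rather than symmetric, so that the contribution from differentiating the \emph{second} $\varphi$ is not simply a repeat of the first but its hatted conjugate $-\bigtch{\langle\langle\varphi,\eta(e_j)\cdot(e_k\cdot\varphi)\rangle\rangle}$. The cleanest route is probably to pass to coordinates: writing $\xi(e_j)=\overline{[\varphi]}\,[e_j]\,\tch{[\varphi]}$ from the rules $[Y\cdot\varphi]=[Y]\tch{[\varphi]}$ and (\ref{def scalar HC}), with $[e_2]=J$, $[e_3]=K$, one computes $d\xi(e_2,e_3)$ directly in $\HC$ using $[\nabla_X\varphi]=[\eta(X)]\,[\varphi]$ (wait — since $\eta(X)\in Cl_1$, rather $[\eta(X)\cdot\varphi]=[\eta(X)]\tch{[\varphi]}$), and the vanishing becomes an identity in the quaternions that is forced by $[\eta(e_j)]$ being an odd element together with the Dirac/trace relation (\ref{eta H}). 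I expect this coordinate computation to be short once the conjugation conventions are pinned down, and I would present it that way rather than intrinsically.
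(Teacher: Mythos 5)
Part (1) of your proposal is exactly the paper's argument; nothing to add there.

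Part (2) is where your proposal goes astray. You assert that after substituting $\eta(e_j)=-\tfrac12\sum_k e_k\cdot B(e_k,e_j)$ the ``$e_2$-$e_3$ cross terms cancel in pairs'' and one is left with $d\xi(e_2,e_3)=0$. That is not what happens. If you actually carry out the substitution, the contribution of the left slots,
$$\langle\langle e_3\cdot\nabla_{e_2}\varphi,\varphi\rangle\rangle-\langle\langle e_2\cdot\nabla_{e_3}\varphi,\varphi\rangle\rangle=\langle\langle(e_3\cdot\eta(e_2)-e_2\cdot\eta(e_3))\cdot\varphi,\varphi\rangle\rangle,$$
does \emph{not} vanish: using the symmetry $B_{23}=B_{32}$ only kills the scalar terms, and what remains is $e_3\cdot\eta(e_2)-e_2\cdot\eta(e_3)=e_2\cdot e_3\cdot\vec H$, so this block equals $-\langle\langle\vec H\cdot\varphi,e_2\cdot e_3\cdot\varphi\rangle\rangle$, which is generically nonzero. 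The actual mechanism of closedness is a conjugation symmetry: one shows separately that the right-slot contribution equals $+\bigtch{\overline{\langle\langle\vec H\cdot\varphi,e_2\cdot e_3\cdot\varphi\rangle\rangle}}$, and then that the $\HC$-valued quantity $\langle\langle\vec H\cdot\varphi,e_2\cdot e_3\cdot\varphi\rangle\rangle$ is its own hatted conjugate (which uses that $\vec H\in E$ commutes with $e_2\cdot e_3$ in the Clifford algebra, plus the two identities in (\ref{properties pairing})). So the vanishing is \emph{not} a pairwise cancellation of $B$-cross-terms, and your analogy to the ``volume-element bookkeeping'' in Lemma \ref{lem calculs courbure} is misplaced. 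Your closing paragraph, which points to the Dirac/trace relation (\ref{eta H}) and a quaternionic identity, is closer to the truth, but it is presented as a hedge rather than the main argument.

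A secondary point: you route the proof through Lemma \ref{lem1} (the Killing-type equation) and the bilinear form $B$, whereas the paper's proof of Proposition \ref{prop fundamental xi} uses only the hypothesis $D\varphi=\vec H\cdot\varphi$ directly. Indeed, grouping the derivatives into $D\varphi=e_2\cdot\nabla_{e_2}\varphi+e_3\cdot\nabla_{e_3}\varphi$ after a Clifford manipulation avoids any reference to $\eta$ or $B$ at all. Since Lemma \ref{lem1} carries a substantial proof of its own (Section \ref{secpflem}), invoking it here is a real loss of economy, and you should instead work with $D\varphi$ directly.
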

\noindent
{\it Proof:} 
1- Using the properties (\ref{properties pairing}), we readily get
\begin{eqnarray*}
\xi(X)=\langle\langle X\cdot\varphi,\varphi\rangle\rangle=-\bigtch{\langle\langle\varphi,X\cdot\varphi\rangle\rangle}=-\bigtch{\overline{\langle\langle X\cdot\varphi,\varphi\rangle\rangle}}=-\bigtch{\overline{\xi(X)}}.
\end{eqnarray*}
2- By a straightforward computation, we get
$$d\xi(e_2,e_3)=\langle\langle e_3\cdot\nabla_{e_2}\varphi,\varphi\rangle\rangle-\langle\langle e_2\cdot\nabla_{e_3}\varphi,\varphi\rangle\rangle+\langle\langle e_3\cdot\varphi,\nabla_{e_2}\varphi\rangle\rangle-\langle\langle e_2\cdot\varphi,\nabla_{e_3}\varphi\rangle\rangle.$$
First observe that the last two terms are linked to the first two terms by
$$\langle\langle e_3\ \cdot\ \varphi,\nabla_{e_2}\varphi\rangle\rangle=-\bigtch{\overline{\langle\langle e_3\cdot\nabla_{e_2}\varphi,\varphi\rangle\rangle}}\hspace{.8cm}\mbox{and}\hspace{.8cm}\langle\langle e_2\ \cdot\ \varphi,\nabla_{e_3}\varphi\rangle\rangle=-\bigtch{\overline{\langle\langle e_2\cdot\nabla_{e_3}\varphi,\varphi\rangle\rangle}}.$$
Moreover
\begin{eqnarray*}
\langle\langle e_3\cdot\nabla_{e_2}\varphi,\varphi\rangle\rangle-\langle\langle e_2\cdot\nabla_{e_3}\varphi,\varphi\rangle\rangle&=&\bigtch{\langle\langle e_2\cdot e_3\cdot\nabla_{e_2}\varphi,e_2\cdot\varphi\rangle\rangle}-\bigtch{\langle\langle e_3\cdot e_2\cdot\nabla_{e_3}\varphi,e_3\cdot\varphi\rangle\rangle}\\
&=&\langle\langle e_2\cdot\nabla_{e_2}\varphi,e_3\cdot e_2\cdot\varphi\rangle\rangle-\langle\langle e_3\cdot\nabla_{e_3}\varphi,e_2\cdot e_3\cdot\varphi\rangle\rangle\\
&=&-\langle\langle D\varphi,e_2\cdot e_3\cdot\varphi\rangle\rangle\\
&=&-\langle\langle\vec{H} \cdot\varphi, e_2\cdot e_3\cdot\varphi\rangle\rangle.
\end{eqnarray*}
Thus
$$d\xi(e_2,e_3)=-\left(\langle\langle \vec{H} \cdot\varphi, e_2\cdot e_3\cdot\varphi\rangle\rangle-\bigtch{\overline{\langle\langle\vec{H}\cdot\varphi, e_2\cdot e_3\cdot\varphi\rangle\rangle}}\right).$$
But
\begin{eqnarray*}
\langle\langle\vec{H} \cdot\varphi, e_2\cdot e_3\cdot\varphi\rangle\rangle&=&-\bigtch{\langle\langle\varphi, \vec{H}\cdot e_2\cdot e_3\cdot\varphi\rangle\rangle}=-\bigtch{\langle\langle\varphi, e_2\cdot e_3\cdot\vec{H}\cdot\varphi\rangle\rangle}\\
&=&\bigtch{\langle\langle e_2\cdot e_3\cdot\varphi, \vec{H}\cdot\varphi\rangle\rangle}=\bigtch{\overline{\langle\langle\vec{H}\cdot\varphi, e_2\cdot e_3\cdot\varphi\rangle\rangle}}.
\end{eqnarray*}
Thus $d\xi=0.$
$\finpreuve$\\
Assume that $M$ is simply connected; then, there exists a function 
$$F:M\lgra\R^{1,3}$$ 
so that $dF=\xi$. The next theorem follows from the properties of the Clifford action:
\begin{thm}\label{theorem second integration}
\begin{enumerate}
\item The map $F=(F_0,F_1,F_2,F_3):M\lgra\R^{1,3}$ is an isometry.
\item The map 
\function{\Phi_E}{E}{M\times\R^{1,3}}{X\in E_m}{(F(m),\xi_0(X),\xi_1(X),\xi_2(X),\xi_3(X))}
is an isometry between $E$ and the normal bundle $N(F(M))$ of $F(M)$ into $\R^{1,3}$, preserving connections and second fundamental forms.
\end{enumerate}
\end{thm}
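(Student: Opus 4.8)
The idea is to compute directly, in a suitable local frame, the differential $dF = \xi$ and to recognize in $\xi$ both the first and the second fundamental form of a spacelike immersion, whose abstract counterparts are $g$ and $B$. First I would fix a point $m\in M$, choose positively oriented orthonormal frames $(e_0,e_1)$ of $E$ and $(e_2,e_3)$ of $TM$ near $m$, and an adapted frame $\tilde s\in\tilde Q$ as in Section~\ref{notation generale}, so that $e_0,e_1,e_2,e_3$ are represented by $i\1,I,J,K\in\HC$ and $\varphi$ by $[\varphi]=:\xi_0\in\HC$ with $\overline{\xi_0}\xi_0 = H(\varphi,\varphi) = 1$, i.e. $\xi_0\in Spin(1,3)$. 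Using the bracket rules $[X\cdot\varphi] = [X]\tch{[\varphi]}$ for $X\in Cl_1$, one gets $\xi(e_j) = \langle\langle e_j\cdot\varphi,\varphi\rangle\rangle = \overline{[\varphi]}\,[e_j]\,\bigtch{[\varphi]}$; so for $j=2,3$,
\[
\xi(e_2) = \overline{\xi_0}\,J\,\bigtch{\xi_0},\qquad \xi(e_3) = \overline{\xi_0}\,K\,\bigtch{\xi_0}.
\]
Since $\xi_0\in Spin(1,3)$, the map $Y\mapsto \overline{\xi_0}\,Y\,\bigtch{\xi_0}$ is (up to the identification of $Cl_1$-elements with their $\HC$-coordinates and the double cover $\Phi$ of \eqref{double cover}) an element of $SO(1,3)$ acting on $\R^{1,3}$, hence an isometry of $\R^{1,3}$. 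Therefore $\langle \xi(e_i),\xi(e_j)\rangle = \langle e_i,e_j\rangle = \delta_{ij}$ for $i,j\in\{2,3\}$, which says exactly that $dF = \xi$ is a spacelike isometric immersion. This proves~(1).

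For~(2), I would first note that the same computation gives, for $\nu\in\Gamma(E)$ represented by $[\nu]$, that $\Phi_E(\nu) = (F(m),\ \overline{\xi_0}\,[\nu]\,\bigtch{\xi_0})$, so that $\Phi_E$ is fiberwise the restriction of the same isometry $Y\mapsto\overline{\xi_0}\,Y\,\bigtch{\xi_0}$ of $\R^{1,3}$ to the $\R^{1,1}$-factor. Consequently $\Phi_E$ is a fiberwise isometry, and since $(e_0,e_1,e_2,e_3)$ is sent to an orthonormal frame of $\R^{1,3}$ whose last two vectors span $T_{F(m)}F(M)$, the image $\Phi_E(E_m)$ is precisely the normal space $N_{F(m)}(F(M))$. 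It remains to check that $\Phi_E$ intertwines the connections and the second fundamental forms. For the connection: the normal connection of $F(M)$ is $\nablab_X^\perp\nu = (\nablab_X\nu)^\perp$, and differentiating $\Phi_E(\nu) = \overline{\xi_0}\,[\nu]\,\bigtch{\xi_0}$ along $X\in TM$ and projecting onto the normal directions, the terms involving $d\xi_0 = dg$ produce, via \eqref{property right action} and the spinorial Gauss formula / Lemma~\ref{lem1}, exactly the connection form of $E$ plus a tangential part which is killed by the normal projection; so $\Phi_E$ preserves connections. For the second fundamental form one uses that, on one side, $B(X,Y)$ is recovered from $\varphi$ by \eqref{B function phi}, and on the other side the second fundamental form of $F(M)$ is $\langle \nablab_X (dF(Y)),\Phi_E(\nu)\rangle$; differentiating $\xi(Y) = \overline{\xi_0}\,[Y]\,\bigtch{\xi_0}$ and using $\nabla_X\varphi = \eta(X)\cdot\varphi$ with $\eta(X) = -\tfrac12\sum_{j}e_j\cdot B(e_j,X)$ gives, after a short Clifford-algebra manipulation, that these two quantities agree.

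\textbf{Main obstacle.} The conceptual content — that $\varphi$ being a unit section of $Spin(1,3)$ in an adapted frame trivializes everything into a moving Lorentz frame $\overline{\xi_0}(\cdot)\bigtch{\xi_0}$ — is clean, and parts~(1) and the ``isometry onto the normal bundle'' half of~(2) are essentially formal once the bracket formalism of Section~\ref{notation generale} and the double cover \eqref{double cover} are in place. The genuinely delicate step is the compatibility with connections and second fundamental forms: one must differentiate the frame-dependent expression $\overline{\xi_0}\,[\cdot]\,\bigtch{\xi_0}$ and carefully separate the contribution of $dg\,g^{-1}$ (the connection $1$-form of $E\oplus TM$ pulled back to $\HC$) from the contribution of the Killing-type equation \eqref{eqnnablaphi}, keeping track of the Clifford-action sign twist $\tch{\ }$ in \eqref{property right action}. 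I expect this bookkeeping — showing that the ``extrinsic'' derivatives $\nablab_X dF(Y)$ and $\nablab_X^\perp\Phi_E(\nu)$ decompose into tangential and normal parts that match $\nabla$, $\nabla^E$ and $B$ respectively — to be the main technical point; the rest reduces to the algebraic identities \eqref{properties pairing}, \eqref{H clifford} and \eqref{property right action} already established in Section~\ref{preliminaries}.
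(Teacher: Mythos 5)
Your treatment of part (1) is essentially the paper's argument dressed in a more geometric language: the paper computes $\overline{\xi(Y)}\xi(X)=\langle\langle X\cdot\varphi,Y\cdot\varphi\rangle\rangle$ directly and reads off $\langle\xi(X),\xi(Y)\rangle=\langle X\cdot\varphi,Y\cdot\varphi\rangle$, while you observe that $\xi(X)=\overline{[\varphi]}\,[X]\,\tch{[\varphi]}$ is nothing but the image of $[X]$ under the Lorentz transformation $\Phi(\overline{[\varphi]})\in SO(1,3)$, using $[\varphi]\in Spin(1,3)$ and the double cover \eqref{double cover}. This is the same algebraic identity, but your reading of it as ``$\xi$ is a moving Lorentz frame'' is a genuinely cleaner conceptual picture; it also makes the first half of part (2) (that $\Phi_E$ is a fiberwise isometry onto the normal bundle) immediate for the same reason. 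The two approaches agree; what each buys is mostly presentation.

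The compatibility with normal connections and second fundamental forms, however, you only sketch, and you yourself flag it as the main obstacle. The paper actually carries out this step, but invariantly: it differentiates $\xi(Y)=\langle\langle Y\cdot\varphi,\varphi\rangle\rangle$ using the covariant Leibniz rule and the Killing-type equation $\nabla_X\varphi=\eta(X)\cdot\varphi$ from Lemma \ref{lem1}, pairs the result against $\xi(\nu)$ via an auxiliary lemma that converts $\HC$-valued pairings into the real pairing on $\Sigma$, and then splits into the two cases $X,Y\in\Gamma(TM)$ (recovering $B$ via \eqref{B function phi}) and $X\in\Gamma(TM)$, $Y\in\Gamma(E)$ (recovering $\nabla^E$). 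Your plan instead differentiates the frame expression $\overline{[\varphi]}\,[\nu]\,\tch{[\varphi]}$ directly. This can work, but as written it elides the fact that $[\nu]$ and $[Y]$ are frame-dependent, so $d[\nu]$ carries a connection-form contribution from $\tilde s$ that you do not separate out; you would need to choose $\tilde s$ parallel at the point of interest (so that the connection forms vanish there) before equating $d[\varphi](X)$ with $[\nabla_X\varphi]=[\eta(X)][\varphi]$, and you would still have to do the Clifford-algebra manipulation that the paper's three-identity lemma encapsulates. In short: part (1) is correct and essentially the paper's route, the isometry half of part (2) likewise; the connection/second-fundamental-form half of part (2) is a correct plan with the right ingredients named, but the frame-dependent bookkeeping (parallel adapted frame, the $\tch{\ }$ twist in \eqref{property right action}) is left unresolved, whereas the paper resolves it by staying covariant throughout.
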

\begin{proof}
Recall that the Minkowski space $\R^{1,3}$ is identified to the subspace $\{\xi\in\HC:\ \xi=-\tch{\overline{\xi}}\},$ with the metric given by the restriction of the quadratic form $H.$ The Minkowski norm is thus 
$$\langle\xi,\xi\rangle:=H(\xi,\xi)=\langle\langle\xi,\xi\rangle\rangle\hspace{.5cm}\in \hspace{.5cm}\R.$$
We first compute, for all $X,Y$ belonging to $E\cup TM,$
$$\overline{\xi(Y)}\xi(X)=\overline{\langle\langle Y\cdot\varphi,\varphi\rangle\rangle}\langle\langle X\cdot\varphi,\varphi\rangle\rangle=\overline{\left(\overline{[\varphi]}[Y\cdot\varphi]\right)}\left(\overline{[\varphi]}[X\cdot\varphi]\right)=\overline{[Y\cdot\varphi]}[X\cdot\varphi]$$
since $[\varphi]\overline{[\varphi]}=1$ ($H(\varphi,\varphi)=1$). Here and below the brackets $[.]$ stand for the components  ($\in \HC$) of the spinor fields in some local section $\tilde{s}$ of $\tilde{Q}.$ Thus
\begin{equation}\label{formula xiX xiY}
\overline{\xi(Y)}\xi(X)=\langle\langle X\cdot\varphi,Y\cdot\varphi\rangle\rangle,
\end{equation}
which in particular implies
\begin{equation}\label{xi isom1}
\langle\xi(X),\xi(Y)\rangle=\langle X\cdot\varphi,Y\cdot\varphi\rangle.
\end{equation}
This last identity easily gives
\begin{equation}\label{xi isom2}
\langle\xi(X),\xi(Y)\rangle=0\hspace{1cm}\mbox{and}\hspace{1cm}|\xi(Z)|^2=|Z|^2
\end{equation}
for all $X\in TM,$ $Y\in E$ and $Z\in E\cup TM.$ Thus $F=\int\xi$ is an isometry, and $\xi$ maps isometrically the bundle $E$ to the normal bundle of $F(M)$ in $\R^{1,3}.$ 

We now prove that $\xi$ preserves the normal connection and the second fundamental form: let $X\in TM$ and $Y\in\Gamma(E)\cup\Gamma(TM);$ then $\xi(Y)$ is a vector field normal or tangent to $F(M).$ Considering $\xi(Y)$ as a map $M\rightarrow\R^{1,3}\subset \HC,$ we have
\begin{eqnarray}
d(\xi(Y))(X)&=&d\langle\langle Y\cdot\varphi,\varphi\rangle\rangle(X)\nonumber\\
&=&\langle\langle \nabla_XY\cdot\varphi,\varphi\rangle\rangle+\langle\langle Y\cdot\nabla_X\varphi,\varphi\rangle\rangle+\langle\langle Y\cdot\varphi,\nabla_X\varphi\rangle\rangle\label{formula dxi}
\end{eqnarray} 
where the connection $\nabla_XY$ denotes the connection on $E$ (if $Y\in\Gamma(E)$) or the Levi-Civita connection on $TM$ (if $Y\in\Gamma(TM)$). We will need the following formulas:
\begin{lem}
We have
$$\langle\ \langle\langle\nabla_XY\cdot\varphi,\varphi\rangle\rangle,\xi(\nu)\rangle=\langle\nabla_XY\cdot\varphi,\nu\cdot\varphi\rangle,$$
$$H(\langle\langle Y\cdot\nabla_X\varphi,\varphi\rangle\rangle,\xi(\nu))=H\left(Y\cdot\nabla_X\varphi,\nu\cdot\varphi\right)$$
and
$$H(\langle\langle Y\cdot\varphi,\nabla_X\varphi\rangle\rangle,\xi(\nu))=\overline{H\left(Y\cdot\nabla_X\varphi,\nu\cdot\varphi\right)}.
$$
In the expressions above, $H$ and $\langle.,.\rangle=\Re e H$ stand respectively for the complex and the real scalar products, defined on $\HC$ for the left-hand side and on $\Sigma$ for the right-hand side of each identity.
\end{lem}
\begin{proof}
The first identity is a consequence of (\ref{formula xiX xiY}) and the second identity may be obtained by a very similar computation. For the last identity, we first notice that
\begin{eqnarray*}
\langle\langle Y\cdot\varphi,\nabla_X\varphi\rangle\rangle=-\bigtch{\langle\langle \varphi,Y\cdot\nabla_X\varphi\rangle\rangle}=-\bigtch{\overline{\langle\langle Y\cdot\nabla_X\varphi,\varphi\rangle\rangle}}.
\end{eqnarray*}
Thus, using moreover that $\xi(\nu)=-\bigtch{\overline{\xi(\nu)}}$ (Proposition \ref{prop fundamental xi}), we get
\begin{eqnarray*}
H\left(\langle\langle Y\cdot\varphi,\nabla_X\varphi\rangle\rangle,\xi(\nu)\right)&=&H\left(-\bigtch{\overline{\langle\langle Y\cdot\nabla_X\varphi,\varphi\rangle\rangle}},-\bigtch{\overline{\xi(\nu)}}\right)\\
&=&H\left(\bigtch{\langle\langle Y\cdot\nabla_X\varphi,\varphi\rangle\rangle},\bigtch{\xi(\nu)}\right)\\
&=&\overline{H\left(\langle\langle Y\cdot\nabla_X\varphi,\varphi\rangle\rangle,\xi(\nu)\right)},
\end{eqnarray*}
and the result follows by the second identity of the lemma.
\end{proof}
\noindent From (\ref{formula dxi}) and the lemma, we readily get the formula 
\begin{equation}\label{formula dxi 2}
\langle d(\xi(Y))(X),\xi(\nu)\rangle=\langle\nabla_XY\cdot\varphi,\nu\cdot\varphi\rangle+2\langle Y\cdot\nabla_X\varphi,\nu\cdot\varphi\rangle.
\end{equation}
We first suppose that $X,Y\in\Gamma(TM).$ Recalling (\ref{B function phi}), and since the first term in the right-hand side of the equation above vanishes in that case ($\nabla_XY\in\Gamma(TM),$ $\nu\in\Gamma(E)$), we get
\begin{eqnarray*}
\langle d(\xi(Y))(X),\xi(\nu)\rangle&=&2\langle Y\cdot\nabla_X\varphi,\nu\cdot\varphi\rangle=\langle B(X,Y),\nu\rangle=\langle \xi(B(X,Y)),\xi(\nu)\rangle,
\end{eqnarray*}
that is, the component of $d(\xi(Y))(X)$ normal to $F(M)$ is given by
\begin{equation}\label{xi preserves B}
(d(\xi(Y))(X))^N=\xi(B(X,Y)).
\end{equation}
We now suppose that $X\in\Gamma(TM)$ and $Y\in\Gamma(E).$ We first observe that the second term in the right-hand side of equation (\ref{formula dxi 2}) vanishes: indeed, if $(e_0,e_1)$ stands for an orthonormal basis of $E,$ for all  $i,j\in\{0,1\}$ we have
\begin{eqnarray*}
\langle e_i\cdot\nabla_X\varphi,e_j\cdot\varphi\rangle=-\langle \nabla_X\varphi,e_i\cdot e_j\cdot\varphi\rangle=-\langle \eta(X)\cdot\varphi,e_i\cdot e_j\cdot\varphi\rangle,
\end{eqnarray*}
which is a sum of terms of the form $\langle e\cdot\varphi,e'\cdot\varphi\rangle$ with $e$ and $e'$ belonging to $TM$ and $E$ respectively; these terms are thus all equal to zero. Thus, (\ref{formula dxi 2}) simplifies to
$$\langle d(\xi(Y))(X),\xi(\nu)\rangle=\langle\nabla_XY\cdot\varphi,\nu\cdot\varphi\rangle=\langle\xi(\nabla_XY),\xi(\nu)\rangle$$
in that case, and thus
\begin{equation}\label{xi preserves nabla}
(d(\xi(Y))(X))^N=\xi(\nabla_XY).
\end{equation}
Equations (\ref{xi preserves B}) and (\ref{xi preserves nabla}) mean that $\Phi_E=\xi$ preserves the second fundamental form and the normal connection respectively.
\end{proof}
\begin{rem}
The immersion $F:M\rightarrow\R^{1,3}$ given by the fundamental theorem is thus
$$F=\int\xi=\left(\int\ \xi_0,\int\ \xi_1,\int\ \xi_2,\int\ \xi_3\right).$$
This formula generalizes the classical Weierstrass representation: let $\alpha_0,\alpha_1,\alpha_2,\alpha_3$ be the $\C-$linear forms defined by
$$\alpha_k(X)=\xi_k(X)-i\xi_k(JX),$$
for $k=0,1,2,3,$ where $J$ is the natural complex structure of $M.$ Let $z$ be a conformal parameter of $M,$ and let $\psi_0,\psi_1,\psi_2,\psi_3:M\rightarrow\C$ be such that
$$\alpha_0=\psi_0dz,\ \alpha_1=\psi_1dz,\ \alpha_2=\psi_2dz,\ \alpha_3=\psi_3 dz.$$
By an easy computation using $D\varphi=\vec H\cdot\varphi,$ we see that $\alpha_0,\alpha_1, \alpha_2$ and $\alpha_3$ are holomorphic forms if and only if $M$ is a maximal surface (i.e. $\vec H=\vec 0$). Thus, if $M$ is maximal, 
\begin{eqnarray*}
F&=&\left(i\ \Im m\int\alpha_0,\ \Re e\int\alpha_1,\ \Re e\int\alpha_2,\ \Re e\int\alpha_3\right)\\
&=&\left(i\ \Im m\int\psi_0dz,\ \Re e\int\psi_1dz,\ \Re e\int\psi_2dz,\ \Re e\int\psi_3dz\right)
\end{eqnarray*}
where $\psi_0,\psi_1,\psi_2,\psi_3$ are holomorphic functions. This is the Weierstrass representation of maximal surfaces in $\R^{1,3}.$
\end{rem}
\begin{rem}\label{rmk varlamov}
A representation using spinors of a conformal immersion of a surface in $\R^{1,3}$ already appeared in \cite{Va}, formula (46); the form of the representation given there is different to the representation $F=\int\xi$ given above (in particular, the normal bundle and the Clifford action do not explicitly appear in the formula given in \cite{Va}), and we don't know if one of these representations may be easily deduced from the other or not.
\end{rem}
\begin{rem}\label{rem immersed surface} 
If $M$ is a surface in $\R^{1,3},$ the immersion may be obtained from the constant spinor fields $\textit{1}$ or $-\textit{1}\in\HC$ restricted to the surface: indeed, for one of these spinor fields $\varphi,$ and for all $X\in TM\subset M\times\R^{1,3},$ we have
$$\xi(X)=\langle\langle X\cdot\varphi,\varphi\rangle\rangle=\overline{[\varphi]}[X]\hat{[\varphi]}=[X],$$
where here the brackets $[X]$ and $[\varphi]=\pm \textit{1}\in\HC$ represent $X$ and $\varphi$ in one of the two spinorial frames of $\R^{1,3}$ which are above the canonical basis. Identifying $[X]\in\R^{1,3}\subset\HC$ to $X\in\R^{1,3},$ $F=\int\xi$ identifies to the identity.
\end{rem}
As in the euclidean case \cite{BLR}, Theorem \ref{theorem second integration} gives a spinorial proof of the fundamental theorem:
\begin{cor}\label{spinorial proof fundamental theorem}
We may integrate the Gauss, Ricci and Codazzi equations in two steps:
\\
\\1- first solving
\begin{equation}\label{eqn killing 2}
\nabla_X\varphi=\eta(X)\cdot\varphi,
\end{equation}
where 
$$\eta(X)=-\frac{1}{2}\sum_{j=2,3}e_j\cdot B(e_j,X)$$
(there is a solution $\varphi$ in $\Gamma(\Sigma)$ such that $H(\varphi,\varphi)=1,$ unique up to the natural right-action of $Spin(1,3)$ on $\Gamma(\Sigma)$);
\\
\\2- then solving
$$dF=\xi$$
where $\xi(X)=\langle\langle X\cdot\varphi,\varphi\rangle\rangle$ (the solution is unique, up to translations of $\R^{1,3}\subset\HC$). 
\end{cor}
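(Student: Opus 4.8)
The plan is to recognise the Gauss, Ricci and Codazzi equations as the flatness of a modified spinorial connection, and then to integrate twice. Let $B\colon TM\times TM\to E$ be bilinear and satisfy these three equations, set $\vec H:=\frac12\trace B$ and
$$\eta(X):=-\frac12\sum_{j=2,3}e_j\cdot B(e_j,X);$$
each $e_j\cdot B(e_j,X)$ is a product of two vectors of $E\oplus TM$, so $\eta$ is a $1$-form with values in $\Lambda^2(E\oplus TM)\subset Cl_0(E\oplus TM)$, i.e. in the Lie algebra $\mathfrak{spin}(1,3)$ of $Spin(1,3)$. Introduce on $\Sigma$ the connection $\widehat\nabla_X:=\nabla_X-\eta(X)\cdot\,$, so that the solutions of \eqref{eqn killing 2} are exactly the $\widehat\nabla$-parallel sections. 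A direct computation of its curvature (the one already performed in \eqref{curv}) gives
$$\mathcal{R}^{\widehat\nabla}(X,Y)\varphi=\mathcal{R}(X,Y)\varphi-d^{\nabla}\eta(X,Y)\cdot\varphi+\big(\eta(X)\cdot\eta(Y)-\eta(Y)\cdot\eta(X)\big)\cdot\varphi.$$
Evaluating at $(e_2,e_3)$ and substituting the three identities of Lemma \ref{lem calculs courbure}: the coefficient of $e_2\cdot e_3$ in $\mathcal{R}^{\widehat\nabla}(e_2,e_3)$ vanishes precisely by the Gauss equation, the coefficient of $e_0\cdot e_1$ vanishes by the Ricci equation, and the remaining (mixed) term $d^{\nabla}\eta(e_2,e_3)$ vanishes by the Codazzi equation. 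Since $e_2\cdot e_3$, $e_0\cdot e_1$ and the $e_j\cdot\nu$ ($\nu\in E$) are linearly independent bivectors, we get $\mathcal{R}^{\widehat\nabla}(e_2,e_3)=0$ as a Clifford multiplication operator, and because $\dim M=2$ the connection $\widehat\nabla$ is flat. As $M$ is simply connected, the $\widehat\nabla$-parallel sections exist and are determined by their value at one point; moreover, since the Clifford action of $\eta(X)$ is a left multiplication in the $\HC$-model whereas $\nabla$ commutes with the right $\HC$-action on $\Sigma$, the connection $\widehat\nabla$ commutes with that right action, so the space of parallel solutions is a free rank-one right $\HC$-module.

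It remains to fix the $H$-normalisation and the uniqueness. Since $\eta(X)\in\mathfrak{spin}(1,3)$ and $Spin(1,3)$ preserves $H$, one has the infinitesimal invariance $H(\eta(X)\cdot\varphi,\varphi)=0$, hence for a $\widehat\nabla$-parallel $\varphi$
$$X\big(H(\varphi,\varphi)\big)=2\,H(\nabla_X\varphi,\varphi)=2\,H(\eta(X)\cdot\varphi,\varphi)=0;$$
thus $H(\varphi,\varphi)$ is constant on $M$ and can be arranged to equal $1$ by choosing the value at one point suitably. For the uniqueness, one uses the elementary identity $\overline{\xi}\,\xi=H(\xi,\xi)\,\1$ in $\HC$ (all the $I,J,K$ cross-terms cancel): in a spinorial frame, a $\widehat\nabla$-parallel $\varphi$ with $H(\varphi,\varphi)=1$ is represented by $\xi\in\HC$ with $\overline\xi\,\xi=\1$, i.e. by an element $\xi\in Spin(1,3)$. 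Given two such solutions $\varphi_1,\varphi_2$ with frame values $\xi_1,\xi_2$ at a base point, the constant $q:=\overline{\xi_1}\,\xi_2\in Spin(1,3)$ makes $\varphi_1\cdot q$ and $\varphi_2$ coincide at that point, hence everywhere (both being $\widehat\nabla$-parallel and $M$ simply connected); this is the asserted uniqueness up to the natural right $Spin(1,3)$-action.

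For Step 2, tracing \eqref{eqn killing 2} and using the symmetry of $B$ together with the Clifford relation $e_k\cdot e_k=-1$ on $TM$ yields $D\varphi=\vec H\cdot\varphi$, so Proposition \ref{prop fundamental xi} applies and $\xi(X):=\langle\langle X\cdot\varphi,\varphi\rangle\rangle$ is a closed $1$-form with values in $\R^{1,3}\subset\HC$. Since $M$ is simply connected, $F:=\int\xi$ exists and is unique up to an additive constant of $\R^{1,3}$, i.e. up to a translation. Finally Theorem \ref{theorem second integration} identifies $F$ as a spacelike isometric immersion of $(M,g)$ into $\R^{1,3}$, together with a bundle isometry $E\to N(F(M))$ preserving connections and second fundamental forms; in particular, by \eqref{xi preserves B} the second fundamental form of $F$ equals $B$ and by \eqref{xi preserves nabla} its normal connection is the given one, which is the fundamental theorem. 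The only genuinely substantive step here is the equivalence ``$B$ satisfies the Gauss, Ricci and Codazzi equations $\Longleftrightarrow$ $\widehat\nabla$ is flat'', and this is essentially Lemma \ref{lem calculs courbure} read in reverse; the points requiring some care are merely (i) that $\eta$ is $\mathfrak{spin}(1,3)$-valued, so that the normalisation $H(\varphi,\varphi)\equiv1$ propagates along $M$, and (ii) the identity $\overline\xi\,\xi=H(\xi,\xi)\1$, which is what ties $H(\varphi,\varphi)=1$ to membership of $\xi$ in $Spin(1,3)$ and hence yields the precise uniqueness statement.
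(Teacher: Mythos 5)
Your proof is correct and follows essentially the same route as the paper: the paper's two-sentence proof invokes ``the Gauss, Ricci and Codazzi equations are exactly its conditions of integrability; see the proof of Theorem \ref{thm1},'' and your argument is precisely the unpacking of that citation. You make explicit what the paper leaves implicit: the introduction of $\widehat\nabla=\nabla-\eta(\cdot)\cdot$, the reading of Lemma \ref{lem calculs courbure} in reverse to see $\mathcal{R}^{\widehat\nabla}(e_2,e_3)=0\Leftrightarrow$ GRC via linear independence of the three bivector types, the propagation of the normalisation $H(\varphi,\varphi)\equiv1$ from $\eta$ being $\mathfrak{spin}(1,3)$-valued, and the precise form of the uniqueness via $q=\overline{\xi_1}\xi_2\in Spin(1,3)$. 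Step 2 is handled exactly as in the paper (trace to get the Dirac equation, then Proposition \ref{prop fundamental xi} and Theorem \ref{theorem second integration}). One small point you elide: the identification of the prescribed $B$ with the $B$ recovered by formula \eqref{B function phi} from the parallel $\varphi$, which is needed so that \eqref{xi preserves B} really returns the given second fundamental form; this is the computation at the end of the paper's proof of Lemma \ref{lem1}, run in the forward direction, and it does work out, so it is not a gap but is worth flagging as a step you are using.
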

This proves the fundamental theorem, Corollary \ref{corollary1 theorem}.
\begin{proof}
Equation (\ref{eqn killing 2}) is solvable, since the Gauss, Ricci and Codazzi equations are exactly its conditions of integrability; see the proof of Theorem \ref{thm1}. Moreover, the multiplication on the right by a constant belonging to $Spin(1,3)$ in the first step, and the addition  of a constant belonging to $\R^{1,3}$ in the second step, correspond to a rigid motion in $\R^{1,3}.$
\end{proof}
\section{Surfaces in $\R^3,$ $\R^{1,2}$ and $\mathbb{H}^3.$}\label{sec hypersurfaces}
The aim of this section is to recover the spinorial characterizations of the spacelike immersions in $\R^3$ (see \cite{Fr} for a complete description), in $\R^{1,2}$ \cite{LR} and in $\mathbb{H}^3$ \cite{Mo}. We suppose that $E=\R e_0\oplus\R e_1$ where $e_0$ and $e_1$ are unit, orthogonal and parallel sections of $E$ such that $\langle e_0,e_0\rangle=-1$ and $\langle e_1,e_1\rangle=+1;$ we moreover assume that $e_0$ is future-directed, and that $(e_0,e_1)$ is positively oriented . We consider the isometric embeddings of $\R^3,$ $\R^{1,2}$ and $\mathbb{H}^3$ in $\R^{1,3}\subset\HC$ given by
$$\R^3={e_0^0}^{\perp},\  \R^{1,2}={e_1^0}^{\perp}\hspace{.2cm}\mbox{and}\hspace{.2cm}\mathbb{H}^3=\{X\in\R^{1,3}:\ \langle X,X\rangle=-1,\ X_0>0\},$$
where $e_0^0=i\1$ and $e_1^0=I$ are the first two vectors of the canonical basis of $\R^{1,3}\subset\HC.$ Let $\vec{H}$ be a section of $E$ and $\varphi\in\Gamma(\Sigma)$ be a  solution of
\begin{equation}\label{systeme phi}
D\varphi=\vec{H}\cdot\varphi\hspace{1cm}\mbox{and}\hspace{1cm}H(\varphi,\varphi)=1.
\end{equation}
According to Section \ref{sec representation}, the spinor field $\varphi$ defines an isometric immersion $M\stackrel{\varphi}{\hookrightarrow}\R^{1,3}$ (unique, up to translations),  with normal bundle $E$ and mean curvature vector $\vec{H}.$

We now give a characterization of the isometric immersions in $\R^3,$ $\R^{1,2}$ and $\mathbb{H}^3$ (up to translations) in terms of $\varphi:$
\begin{prop}\label{prop cond hypersurfaces}
1- Assume that 
\begin{equation}\label{cond phi reel}
\vec{H}=H e_1\hspace{1cm}\mbox{and}\hspace{1cm}e_1\cdot\varphi=e_2\cdot e_3\cdot\varphi
\end{equation}
where $(e_2,e_3)$ is a positively oriented orthonormal frame of $M.$ Then the isometric immersion $M\stackrel{\varphi}{\hookrightarrow}\R^{1,3}$ belongs to $\R^{3}.$
\\
\\2- Assume that 
\begin{equation}\label{condition R1,2}  
\vec{H}=H e_0\hspace{1cm}\mbox{and}\hspace{1cm}e_1\cdot\varphi=\pm i\varphi.
\end{equation}
Then the isometric immersion $M\stackrel{\varphi}{\hookrightarrow}\R^{1,3}$ belongs to $\R^{1,2}.$
\\
\\3- Consider the function $F:=\langle\langle e_0\cdot\varphi,\varphi\rangle\rangle,$ and assume that 
\begin{equation}\label{condition H3}
\vec{H}=e_0+H e_1\hspace{1cm}\mbox{and}\hspace{1cm}dF(X)=\langle\langle X\cdot\varphi,\varphi\rangle\rangle
\end{equation} 
for all $X\in\Gamma(TM).$ Then the isometric immersion $M\stackrel{\varphi}{\hookrightarrow}\R^{1,3}$ belongs to $\HH^3$
\\

Reciprocally, if $M\stackrel{\varphi}{\hookrightarrow}\R^{1,3}$ belongs to $\R^3$ (resp. to $\R^{1,2},$ $\HH^3$), then (\ref{cond phi reel}) (resp. (\ref{condition R1,2}), (\ref{condition H3})) holds for some unit, orthogonal and parallel sections $(e_0,e_1)$ of $E.$ 
\end{prop}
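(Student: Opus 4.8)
The three cases are proved by a common scheme, which I would present together. By Section~\ref{sec representation}, the spinor $\varphi$ produces the immersion $\int\xi:M\to\R^{1,3}$ with $\xi(X)=\langle\langle X\cdot\varphi,\varphi\rangle\rangle$, normal bundle $E$ and mean curvature $\vec H$, and $\xi$ maps $E$ isometrically and connection-preservingly onto the normal bundle of its image. The computational core is that, in a local section $\tilde s$ of $\tilde Q$ adapted to a positively oriented orthonormal frame $(e_0,e_1,e_2,e_3)$, where $e_0\leftrightarrow i\1$ and $e_1\leftrightarrow I$ (Section~\ref{notation generale}) and $e_2\cdot e_3$ acts as left multiplication by $I\in\HC$, the rule $[Y\cdot\varphi]=[Y]\tch{[\varphi]}$ together with $\overline{[\varphi]}\,[\varphi]=[\varphi]\,\overline{[\varphi]}=\1$ (which is the normalization $H(\varphi,\varphi)=1$) gives
\[
\xi(e_0)=\langle\langle e_0\cdot\varphi,\varphi\rangle\rangle=i\,\overline{[\varphi]}\,\tch{[\varphi]},\qquad
\xi(e_1)=\langle\langle e_1\cdot\varphi,\varphi\rangle\rangle=\overline{[\varphi]}\,I\,\tch{[\varphi]}.
\]
Thus the hypotheses on $\varphi$ will be translated into the constancy of $\xi(e_0)$ or $\xi(e_1)$, and the inclusion of the image in a fixed subspace will follow.

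For Part~1, the condition $e_1\cdot\varphi=e_2\cdot e_3\cdot\varphi$ reads $I\tch{[\varphi]}=I[\varphi]$, i.e. $\tch{[\varphi]}=[\varphi]$ (real coordinates), whence $\xi(e_0)=i\,\overline{[\varphi]}[\varphi]=i\1=e_0^0$, a fixed vector of $\R^{1,3}$. Since $\langle\xi(X),\xi(e_0)\rangle=0$ for all $X\in TM$ by~(\ref{xi isom2}), the function $\langle\int\xi,e_0^0\rangle$ is constant, so up to a translation the image lies in ${e_0^0}^{\perp}=\R^3$; the hypothesis $\vec H=He_1$ is then automatic (and in fact not needed in this implication). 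Part~2 is identical: recalling that the standard complex structure of $\Sigma$ is right multiplication by $I$ on $\HC$, the condition $e_1\cdot\varphi=\pm i\varphi$ reads $I\tch{[\varphi]}=\pm[\varphi]I$, so $\xi(e_1)=\overline{[\varphi]}\,I\,\tch{[\varphi]}=\pm\overline{[\varphi]}[\varphi]\,I=\pm I=\pm e_1^0$ is constant, hence (again by~(\ref{xi isom2})) $\langle\int\xi,e_1^0\rangle$ is constant and the image lies in ${e_1^0}^{\perp}=\R^{1,2}$.

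Part~3 requires one honest computation. With $F:=\langle\langle e_0\cdot\varphi,\varphi\rangle\rangle=\xi(e_0)$ and $e_0$ parallel, I would evaluate $d(\xi(e_0))(X)$ for $X\in TM$ from the expression~(\ref{formula dxi}), using $\nabla_X e_0=0$, $\nabla_X\varphi=\eta(X)\cdot\varphi$ with $\eta(X)=-\frac12\sum_{j=2,3}e_j\cdot B(e_j,X)$, and the identities~(\ref{properties pairing}). Writing $B=p\,e_0+q\,e_1$ and using repeatedly that $e_0$ anticommutes with every tangent vector, the three terms of~(\ref{formula dxi}) collapse to $d(\xi(e_0))(X)=\xi\!\left(\sum_{j=2,3}p(e_j,X)e_j\right)$. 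Hence the hypothesis $dF(X)=\langle\langle X\cdot\varphi,\varphi\rangle\rangle$ is equivalent to $p(\cdot,\cdot)=\langle\cdot,\cdot\rangle$, i.e. $B(X,Y)=\langle X,Y\rangle e_0+q(X,Y)e_1$, which forces $\vec H=e_0+He_1$ with $H=\frac12\trace q$. Granting this, $F$ and $\int\xi$ have the same differential on $M$, so after a translation $\int\xi=F$; then $\langle F,F\rangle=\langle e_0,e_0\rangle=-1$ by~(\ref{xi isom2}), and the image lies on the future sheet of the hyperboloid because $e_0$ is future-directed, so it is contained in $\HH^3$.

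For the converses I would use that $\R^3$ and $\R^{1,2}$ are totally geodesic in $\R^{1,3}$ while $\HH^3$ is totally umbilic with shape operator the identity in the direction of the position vector. If $\int\xi(M)\subset\R^3$ (resp. $\R^{1,2}$), then $e_0^0$ (resp. $e_1^0$) is a constant, hence normal-parallel, unit section of the normal bundle; if $\int\xi(M)\subset\HH^3$, the position field is a parallel unit timelike normal field. In every case $\xi^{-1}$ of this field is a parallel unit section of $E$, which I complete to a parallel orthonormal frame $(e_0,e_1)$; reading the identities of the first paragraph backwards then yields~(\ref{cond phi reel}),~(\ref{condition R1,2}) or~(\ref{condition H3}), and the stated form of $\vec H$ follows from total geodesy (so that $\vec H$ is orthogonal to $e_0$, resp. to $e_1$) or from the umbilicity computation of Part~3. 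The only real obstacle is the Clifford bookkeeping in Part~3, together with consistently distinguishing the quaternionic conjugation, the coefficientwise complex conjugation, and the two complex structures on $\Sigma$; everything else is a uniform translation between Clifford-action identities and identities for the coordinates $[\varphi]\in\HC$.
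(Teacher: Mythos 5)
Your argument is correct and follows the same overall scheme as the paper: translate each condition on $\varphi$ into a condition on the coordinates $[\varphi]\in\HC$ in an adapted frame, conclude that the appropriate $\xi(e_i)$ is a constant vector of $\R^{1,3}$, and invoke the orthogonality and metric properties of $\xi$ from Theorem~\ref{theorem second integration}. Parts~1 and~2 are essentially identical to the paper's.

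For Part~3 you go a somewhat longer route: you expand $d(\xi(e_0))(X)$ via~(\ref{formula dxi}) and the Weingarten-type identity to show the hypothesis $dF=\xi$ is equivalent to the $e_0$-component of $B$ being the metric, which in particular forces $\vec H=e_0+He_1$. This is correct (and is a nice observation that the first hypothesis in~(\ref{condition H3}) is actually redundant), but it is unnecessary for the direct implication: since $dF=\xi$ is hypothesized, $F$ is by definition a primitive of $\xi$ and therefore \emph{is} the immersion; then $H(F,F)=H(\xi(e_0),\xi(e_0))=\langle e_0,e_0\rangle=-1$ by~(\ref{xi isom2}) and the first component of $F$ is positive, so $F(M)\subset\HH^3$. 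That is all the paper does, and it avoids the Clifford bookkeeping you flag as the main obstacle.

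One detail in the converse deserves mention. You pull back the constant unit normal (respectively the position field) by $\xi^{-1}$ and complete to a parallel frame $(e_0,e_1)$, which is the right idea; but you do not address whether $\xi^{-1}(i\1)$ is necessarily future-directed, i.e.\ compatible with the given time orientation of $E$. The paper verifies explicitly that the alternative $\xi(e_0)=-i\1$ is incompatible with $H(\varphi,\varphi)=1$ (in an adapted frame this would require $\overline{[\varphi]}\tch{[\varphi]}=-\1$, hence $\tch{[\varphi]}=-[\varphi]$, which gives $H(\varphi,\varphi)\leq 0$), so this is not a choice but a forced consistency check. It is a small point, and your proof works once it is filled in, but it is worth stating to make the choice of parallel frame well-posed.

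Finally, your remark that $\vec H=He_1$ (resp.\ $\vec H=He_0$) is automatic in Parts~1 and~2 is correct by total geodesy of $\R^3$ (resp.\ $\R^{1,2}$) in $\R^{1,3}$; the paper does not point this out, but it is a clean observation.
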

\noindent\textit{Proof:} 1- We suppose that (\ref{cond phi reel}) holds, and we compute
\begin{eqnarray*}
\xi(e_0)&=&\langle\langle e_0\cdot\varphi,\varphi\rangle\rangle=-\bigtch{\langle\langle e_0\cdot e_1\cdot\varphi,e_1\cdot\varphi\rangle\rangle}=-\bigtch{\langle\langle e_0\cdot e_1\cdot\varphi,e_2\cdot e_3\cdot \varphi\rangle\rangle}\\&=&\bigtch{\langle\langle e_0\cdot e_1\cdot e_2\cdot e_3\cdot\varphi,\varphi\rangle\rangle}=i\1
\end{eqnarray*}
since, by a direct computation, $[e_0\cdot e_1\cdot e_2\cdot e_3\cdot\varphi]=-i[\varphi],$ where the brackets denote the complex quaternions which represent the spinor fields in a given frame $\tilde{s}\in \tilde{Q}$ adapted to the basis $(e_0,e_1,e_2,e_3)$ (see Section \ref{notation generale}). Thus, the constant vector $e_0^0=i\1\in \R^{1,3}\subset \HC$ is normal to the immersion (by Theorem \ref{theorem second integration}, (2)), and the immersion thus belongs to $\R^3.$ 
\\2- Analogously, assuming that (\ref{condition R1,2}) holds, we compute
\begin{equation*}
\xi(e_1)=\langle\langle e_1\cdot\varphi,\varphi\rangle\rangle=\pm\langle\langle i\varphi,\varphi\rangle\rangle=\pm\overline{[\varphi]}([\varphi] I)=\pm I
\end{equation*}
where $[\varphi]\in\HC$ represents $\varphi$ in $\tilde{s}.$ The constant vector $e_1^0=I$ is thus normal to the immersion, and the result follows. 
\\3- Assuming that (\ref{condition H3}) holds, the function $F=\langle\langle e_0\cdot\varphi,\varphi\rangle\rangle$ is a primitive of the 1-form $\xi(X)=\langle\langle X\cdot\varphi,\varphi\rangle\rangle,$ and is thus the isometric immersion defined by $\varphi$ (uniquely defined, up to translations); since the Minkowski norm of $\langle\langle e_0\cdot\varphi,\varphi\rangle\rangle\in\R^{1,3}\subset\HC$ coincides with the norm of $e_0,$ and is thus constant equal to $-1,$ and since, by a direct computation, its first component is positive, the immersion belongs to $\HH^3.$ 

For the converse statements, we choose  $(e_0,e_1)$ such that $\xi(e_0)=\langle\langle e_0\cdot\varphi,\varphi\rangle\rangle=i\1$ in the first case (the other condition $\langle\langle e_0\cdot\varphi,\varphi\rangle\rangle=-i\1$, expressing that the normal vector $\xi(e_0)$ is the other unit vector $-i\1$ normal to $\R^3,$ is not compatible with $H(\varphi,\varphi)=1$), such that $\xi(e_1)=\langle\langle e_1\cdot\varphi,\varphi\rangle\rangle=\pm I$ in the second case, and such that $\xi(e_0)=\langle\langle e_0\cdot\varphi,\varphi\rangle\rangle$ is the future-directed vector normal to $\HH^3$ in $\R^{1,3}$ in the last case. Writing, for the first two cases, the spinors in a frame $\tilde{s}$ adapted to $(e_0,e_1,e_2,e_3)$, we easily deduce (\ref{cond phi reel}) and (\ref{condition R1,2}). For the last case, (\ref{condition H3}) is immediate since $\langle\langle e_0\cdot\varphi,\varphi\rangle\rangle$ is the immersion.
$\finpreuve$

We now indicate how these results may be related to the criteria concerning immersions in $\R^3,$ $\R^{1,2}$ and $\mathbb{H}^3$; for briefness, we omit most of the proofs.

Assume first that  $M\subset \mathcal{H}\subset\R^{1,3},$ where $\mathcal{H}$ is the spacelike hyperplane $\R^3$, or the unit hyperboloid $\mathbb{H}^3$ of $\R^{1,3},$ and consider $e_0$ and $e_1$ timelike and spacelike unit vector fields such that
 $$\R^{1,3}=\R e_0\oplus_{\perp} T\mathcal{H}\hspace{1cm}\mbox{and}\hspace{1cm}T\mathcal{H}=\R e_1\oplus_{_\perp}TM.$$
 The intrinsic spinors of $M$ identify with the spinors of $\mathcal{H}$ restricted to $M,$ which in turn identify with the positive spinors of $\R^{1,3}$ restricted to $M:$
 \begin{prop}
There is an identification
  \begin{eqnarray*}
  \Sigma M &\stackrel{\sim}{\rightarrow}& \Sigma^+_{|M}\\
  \psi&\mapsto&\psi^*
  \end{eqnarray*}
such that, for all $X\in TM$ and all $\psi\in \Sigma M,$
$\displaystyle{(\nabla_X\psi)^*=\nabla_X(\psi^*),}$
the Clifford actions are linked by
$$(X\cdot_{_M}\psi)^*=X\cdot e_1\cdot \psi^*,$$
and the following two properties hold:
\begin{equation}\label{ident cond1}
H(e_1\cdot\psi^*,e_2\cdot e_3\cdot\psi^*)=\frac{1}{2}|\psi|^2
\end{equation}
and 
\begin{equation}\label{ident cond2} 
d\langle\langle e_0\cdot\psi^*,\psi^*\rangle\rangle(X)=\langle\langle X\cdot\psi^*,\psi^*\rangle\rangle\hspace{.5cm}\mbox{ iff }\hspace{.5cm}d|\psi|^2(X)=-\Re e\langle X\cdot_{_M}\overline{\psi},\psi\rangle.
\end{equation}
 \end{prop}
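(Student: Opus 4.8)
The plan is to construct the identification $\Sigma M \stackrel{\sim}{\to} \Sigma^+_{|M}$ explicitly and then verify each of the asserted properties by a computation in a convenient local frame. First I would set up the geometric picture: since $M \subset \mathcal{H} \subset \R^{1,3}$ with $e_0, e_1$ the successive unit normals, one has the splitting $\Sigma \R^{1,3}_{|M} \simeq \Sigma E \otimes \Sigma M$ with $E = \R e_0 \oplus \R e_1$; but $e_0, e_1$ parallel means $\Sigma E$ is trivialized, and the rank-two Lorentzian spin bundle $\Sigma E$ splits under the Clifford action of $e_0 \cdot e_1$ into eigenline bundles. The key observation is that requiring $\varphi \in \Sigma^+$ (the $+1$-eigenspace of the complexified volume element $i\, e_0\cdot e_1\cdot e_2\cdot e_3$) together with fixing one of the $e_0\cdot e_1$-eigenvalues pins down a single copy of $\Sigma M$ inside $\Sigma$, and this is the identification $\psi \mapsto \psi^*$. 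Concretely, using the basis description $S^{++} = \C J(\1 - iI)$ etc. from Section \ref{preliminaries splitting}, I would define $\psi^*$ by tensoring $\psi$ with a distinguished parallel section $\alpha_0 \in \Sigma E$ (determined by $e_0\cdot e_1 \cdot \alpha_0 = \alpha_0$, say), so that $\psi^* = \alpha_0 \otimes \psi \in \Sigma^+E \otimes \Sigma M \subset \Sigma^+$.

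Once the map is defined this way, the compatibility with connections, $(\nabla_X \psi)^* = \nabla_X(\psi^*)$, is immediate from the definition $\nabla = \nabla^{\Sigma E}\otimes \iid + \iid \otimes \nabla^{\Sigma M}$ together with $\nabla^{\Sigma E}\alpha_0 = 0$ (which follows from $e_0, e_1$ being parallel). The Clifford action formula $(X\cdot_{_M}\psi)^* = X \cdot e_1 \cdot \psi^*$ I would verify directly from the definition of the twisted Clifford product in Section \ref{twisted spinor bundle}: for $X \in TM$, $X \cdot \psi^* = \overline{\alpha_0} \otimes (X \cdot_{_M}\psi)$ where $\overline{\alpha_0} = \alpha_0^+ - \alpha_0^-$; one then checks that composing with $e_1 \cdot$ (which acts on the $\Sigma E$ factor and flips or fixes the relevant eigenline) restores $\alpha_0$ and produces exactly $\alpha_0 \otimes (X\cdot_{_M}\psi) = (X\cdot_{_M}\psi)^*$. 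This is a short algebraic check in the $\HC$-model, tracking how $e_1 = I$ (in an adapted frame) interacts with the projector onto $\Sigma^+ E$.

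For the two remaining properties I would pass to an adapted frame $\tilde{s}$ as in Section \ref{notation generale}, so that $e_0, e_1, e_2, e_3$ are represented by $i\1, I, J, K$ and $\psi^*$ by some $\xi \in \HC$ lying in the appropriate eigenline. Then $H(e_1\cdot\psi^*, e_2\cdot e_3\cdot\psi^*)$ becomes, by the formula $H(\psi,\psi') = \pre\langle\langle\psi,\psi'\rangle\rangle$ and $\langle\langle\psi,\psi'\rangle\rangle = \overline{\xi'}\xi$, an explicit expression $\overline{(I \cdot \text{something})}\cdots$ in the components of $\xi$; using that $\xi$ lies in $\C J (\1 - iI)$ (or the relevant line) and that $|\psi|^2 = |\psi^*|^2 = \langle\psi^*,\psi^*\rangle$ reduces to $\overline{\xi}\xi$ restricted to $\C$, the identity $H(e_1\cdot\psi^*, e_2\cdot e_3\cdot\psi^*) = \tfrac12|\psi|^2$ should drop out after a one-line quaternionic multiplication. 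For \eqref{ident cond2}, I would expand $d\langle\langle e_0\cdot\psi^*,\psi^*\rangle\rangle(X)$ using the Leibniz rule exactly as in the proof of Theorem \ref{theorem second integration}, getting $\langle\langle e_0\cdot\nabla_X\psi^*,\psi^*\rangle\rangle + \langle\langle e_0\cdot\psi^*,\nabla_X\psi^*\rangle\rangle$ (since $e_0$ is parallel), and compare termwise — using the Clifford-transpose identity \eqref{properties pairing} — with the expansion of $d|\psi|^2(X) = 2\pre\langle\nabla_X\psi, \psi\rangle$ after translating through the identifications $(\nabla_X\psi)^* = \nabla_X\psi^*$ and $(X\cdot_{_M}\psi)^* = X\cdot e_1\cdot\psi^*$. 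The equivalence then becomes the statement that two $\HC$-valued (in fact $\R^{1,3}$-valued) expressions vanish simultaneously, which I expect to follow from the structure of $\langle\langle\cdot,\cdot\rangle\rangle$ on the eigenline containing $\psi^*$.

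The main obstacle I anticipate is bookkeeping: getting the eigenvalue conventions for $e_0\cdot e_1$ and $i\,e_0\cdot e_1\cdot e_2\cdot e_3$ consistent, so that the distinguished section $\alpha_0$ is the right one and the sign in $(X\cdot_{_M}\psi)^* = X\cdot e_1\cdot\psi^*$ (rather than $-X\cdot e_1\cdot\psi^*$) comes out correctly — the subtlety being that the twisted Clifford product uses $\overline{\alpha}$ rather than $\alpha$ for tangential vectors, which introduces sign flips depending on which $\Sigma^\pm E$ component one lands in. Once the model in $\HC$ is fixed with explicit representatives, though, all four claims are genuinely routine quaternionic identities, and the proof consists of recording them.
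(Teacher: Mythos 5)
Your proposed identification $\psi^*:=\alpha_0\otimes\psi$, with a single distinguished parallel section $\alpha_0\in\Sigma^+E$, does not land in $\Sigma^+$, so the construction has a gap at its very first step. Recall from Section \ref{preliminaries splitting} that
$$\Sigma^+=\Sigma^{++}\oplus\Sigma^{--}=\Sigma^+E\otimes\Sigma^+M\ \oplus\ \Sigma^-E\otimes\Sigma^-M,$$
whereas $\Sigma^+E\otimes\Sigma M=\Sigma^{++}\oplus\Sigma^{+-}$. The inclusion $\Sigma^+E\otimes\Sigma M\subset\Sigma^+$ that you assert is therefore false: these two rank-two bundles meet only in the rank-one $\Sigma^{++}$. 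Concretely, $\alpha_0\otimes\psi^-$ lies in $\Sigma^{+-}\subset\Sigma^-$, i.e. it has the wrong $\R^{1,3}$-chirality.

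This is not the sign bookkeeping you flag at the end, but a genuine obstruction: the Clifford formula fails for your map as well. With the twisted product of Section \ref{twisted spinor bundle}, $X\cdot e_1\cdot(\alpha_0\otimes\psi)=\overline{(e_1\cdot_{_E}\alpha_0)}\otimes(X\cdot_{_M}\psi)=-(e_1\cdot_{_E}\alpha_0)\otimes(X\cdot_{_M}\psi)$ since $e_1\cdot_{_E}\alpha_0\in\Sigma^-E$; matching this with $(X\cdot_{_M}\psi)^*=\alpha_0\otimes(X\cdot_{_M}\psi)$ would require $\alpha_0=-e_1\cdot_{_E}\alpha_0$, which is impossible because $\iid+e_1\cdot_{_E}$ is invertible on $\Sigma E$ (indeed $(e_1\cdot_{_E})^2=-\iid$, so $(\iid+e_1\cdot_{_E})(\iid-e_1\cdot_{_E})=2\iid$). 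The repair is to split $\psi$ into chiralities and use two parallel sections: set $\psi^*:=\alpha^+\otimes\psi^++\alpha^-\otimes\psi^-$ with $\alpha^\pm\in\Sigma^\pm E$ parallel and linked by $\alpha^-=-e_1\cdot_{_E}\alpha^+$, equivalently $\alpha^+=e_1\cdot_{_E}\alpha^-$. Then $\psi^*\in\Sigma^{++}\oplus\Sigma^{--}=\Sigma^+$; parallelism of $\alpha^\pm$ gives $(\nabla_X\psi)^*=\nabla_X(\psi^*)$ just as you argue; and, since $X\cdot_{_M}$ reverses chirality on $\Sigma M$, the Clifford identity now closes:
$$X\cdot e_1\cdot\psi^*=-(e_1\cdot_{_E}\alpha^+)\otimes(X\cdot_{_M}\psi^+)+(e_1\cdot_{_E}\alpha^-)\otimes(X\cdot_{_M}\psi^-)=\alpha^-\otimes(X\cdot_{_M}\psi^+)+\alpha^+\otimes(X\cdot_{_M}\psi^-)=(X\cdot_{_M}\psi)^*.$$
With the map corrected in this way, your plan to verify \eqref{ident cond1} and \eqref{ident cond2} by explicit quaternionic computation in an adapted frame is reasonable, and the paper indeed omits those verifications as routine.
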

Using this identification, the intrinsic Dirac operator on $M$ defined by
$$D_M\psi:=e_2\cdot_{_M}\nabla_{e_2}\psi+e_3\cdot_{_M}\nabla_{e_3}\psi$$
 is linked to $D$ by
 $$(D_M\psi)^*=-e_1\cdot D\psi^*.$$
If $\varphi\in\Gamma(\Sigma)$ is a solution of
 \begin{equation}\label{systeme phi 2}
 D\varphi=\vec H\cdot\varphi\hspace{1cm}\mbox{ and }\hspace{1cm}H(\varphi,\varphi)=1
 \end{equation}
we may consider $\psi\in\Sigma M$ such that $\psi^*=\varphi^+$; it satisfies 
\begin{equation}\label{eqnphi+}
(D_M\psi)^* =-e_1\cdot D\psi^*=-e_1\cdot \vec H\cdot \psi^*.
\end{equation}
Note that $\psi\ne 0,$ since 
\begin{equation}\label{norme phi phi+ phi-}
H(\varphi,\varphi)=2H(\varphi^+,\varphi^-)=1
\end{equation}
(here and below the decomposition $\varphi=\varphi^++\varphi^-$ is the decomposition in \\$\Sigma=\Sigma^+\oplus\Sigma^-,$ and we recall that $H$ vanishes on $\Sigma^+$ and $\Sigma^-;$ see Section \ref{twisted spinor bundle}).
\\
\\We examine separately the case of a surface in $\R^3,$ and in $\mathbb{H}^3:$
\\
\\1. If $\mathcal{H}=\R^3$, then $\vec H$ is of the form $He_1,$ and (\ref{eqnphi+}) reads
\begin{equation*}\label{eqn Friedrich}
D_M\psi =H\psi;
\end{equation*}
moreover, (\ref{norme phi phi+ phi-}), (\ref{cond phi reel}) and (\ref{ident cond1}) imply that $|\psi|=1.$ This is the spinorial characterization of an isometric immersion in $\R^3.$
\\
\\2. If $\mathcal{H}=\mathbb{H}^3,$ then $\vec H$ is of the form $e_0+He_1,$ 
and  (\ref{eqnphi+}) reads
\begin{equation}\label{eqn Morel 1}
D_M\psi =H\psi+\overline{\psi};
\end{equation}
moreover, it is not difficult to prove that (\ref{condition H3}) implies that (\ref{ident cond2}) holds. Equation (\ref{eqn Morel 1}), together with the right hand side of (\ref{ident cond2}), is the spinorial characterization of an isometric immersion in $\mathbb{H}^3$ obtained by B. Morel in \cite{Mo}.
\\

Assume now that $M\subset \mathcal{H}\subset\R^{1,3},$ where $\mathcal{H}$ is the timelike hyperplane $\R^{1,2}$, and let $e_0$ and $e_1$ be timelike and spacelike unit vector fields such that
 $$\R^{1,3}=\R e_1\oplus_{\perp} T\mathcal{H}\hspace{1cm}\mbox{and}\hspace{1cm}T\mathcal{H}=\R e_0\oplus_{_\perp}TM.$$
\begin{prop}
There is an identification
  \begin{eqnarray*}
  \Sigma M &\stackrel{\sim}{\rightarrow}& \Sigma^+_{|M}\\
  \psi&\mapsto&\psi^*
  \end{eqnarray*}
such that, for all $X\in TM$ and all $\psi\in \Sigma M,$ $\displaystyle{(\nabla_X\psi)^*=\nabla_X(\psi^*),}$ the Clifford actions are linked by
$$(X\cdot_{_M}\psi)^*=ie_0\cdot X\cdot \psi^*$$
and 
$$H(\psi^*,ie_1\cdot\psi^*)=\frac{1}{2}\left(|\psi^+|^2-|\psi^-|^2\right).$$
\end{prop}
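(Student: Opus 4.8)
This proposition is the analogue, for a surface lying in the timelike hyperplane $\mathcal{H}=\R^{1,2}\subset\R^{1,3}$, of the preceding proposition (which treats $\R^3$ and $\HH^3$), and I would prove it along the same lines, the roles of $e_0$ and $e_1$ being exchanged: here $e_1$ is the spacelike normal of $\mathcal{H}$ in $\R^{1,3}$ and $e_0$ the timelike normal of $M$ inside $\mathcal{H}$, and both are parallel sections of $E$ (for the rank-one bundle $E\cap T\mathcal{H}$ this follows from metric compatibility, exactly as in the $\R^3$ case). Since $(e_0,e_1)$ is then a parallel orthonormal frame of $E$, it trivializes $\Sigma E=\Sigma^+E\oplus\Sigma^-E$; I would fix a parallel section $\alpha^+$ of $\Sigma^+E$ (with normalization to be pinned down in the last step) and set $\alpha^-:=i\,e_0\cdot_{_E}\alpha^+\in\Sigma^-E$, which is again parallel and satisfies $e_0\cdot_{_E}\alpha^-=i\,\alpha^+$ since $e_0$ is timelike ($e_0\cdot e_0=+1$). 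For $\psi=\psi^++\psi^-\in\Sigma^+M\oplus\Sigma^-M=\Sigma M$ I would then define $\psi^*:=\alpha^+\otimes\psi^++\alpha^-\otimes\psi^-\in\Sigma^{++}\oplus\Sigma^{--}=\Sigma^+_{|M}$; this is plainly a complex bundle isomorphism $\Sigma M\to\Sigma^+_{|M}$.

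Compatibility with the connection is immediate: the spinorial connection of $\Sigma M$ preserves $\Sigma M=\Sigma^+M\oplus\Sigma^-M$ and $\alpha^\pm$ are parallel, so $(\nabla_X\psi)^*=\alpha^+\otimes\nabla_X\psi^++\alpha^-\otimes\nabla_X\psi^-=\nabla_X(\psi^*)$. For the Clifford formula, I would unfold the definitions of Section \ref{twisted spinor bundle}: a vector $X\in TM$ acts on $\Sigma E\otimes\Sigma M$ through the $\Sigma M$-factor, with the twist $\alpha\mapsto\overline\alpha$ on the $\Sigma E$-factor (so that $\overline{\alpha^+}=\alpha^+$, $\overline{\alpha^-}=-\alpha^-$), while $e_0\in E$ acts through the $\Sigma E$-factor only; hence $e_0\cdot X\cdot\psi^*=(e_0\cdot_{_E}\alpha^+)\otimes(X\cdot_{_M}\psi^+)-(e_0\cdot_{_E}\alpha^-)\otimes(X\cdot_{_M}\psi^-)$, and multiplying by the complex structure $i$ and using $i\,e_0\cdot_{_E}\alpha^+=\alpha^-$, $i\,e_0\cdot_{_E}\alpha^-=-\alpha^+$ turns the right-hand side into $\alpha^-\otimes(X\cdot_{_M}\psi^+)+\alpha^+\otimes(X\cdot_{_M}\psi^-)=(X\cdot_{_M}\psi)^*$, because $X\cdot_{_M}\psi^+\in\Sigma^-M$ and $X\cdot_{_M}\psi^-\in\Sigma^+M$. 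The factor $i$ is exactly what makes $i\,e_0\cdot X$ a bona fide Riemannian Clifford action on $\Sigma M$: $e_0$ being timelike, $(e_0\cdot X)^2=+|X|^2\iid$, hence $(i\,e_0\cdot X)^2=-|X|^2\iid$.

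It remains to prove $H(\psi^*,i\,e_1\cdot\psi^*)=\frac{1}{2}(|\psi^+|^2-|\psi^-|^2)$. I would use that $H$ vanishes on $\Sigma^+$, that $e_1\in E$ carries $\Sigma^{++}$ into $\Sigma^{-+}$ and $\Sigma^{--}$ into $\Sigma^{+-}$ (so $e_1\cdot\psi^*\in\Sigma^-$), and that $\Sigma E\otimes\Sigma^+M$ and $\Sigma E\otimes\Sigma^-M$ are $H$-orthogonal; this leaves only two contributions to $H(\psi^*,i\,e_1\cdot\psi^*)$, the $H$-pairing of the $\Sigma^{++}$-component of $\psi^*$ with the $\Sigma^{-+}$-component of $e_1\cdot\psi^*$, and that of the $\Sigma^{--}$-component with the $\Sigma^{+-}$-component. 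I would evaluate both in a spinorial frame adapted to $(e_0,e_1,e_2,e_3)$ in the complex-quaternion model of Section \ref{notation generale} (there $\Sigma^{++}=\C J(\1-iI)$, $\Sigma^{--}=\C(\1-iI)$, and the $e_j$ act by left multiplication by $i\1,I,J,K$ combined with conjugation of the $\HC$-coefficients of the spinor): the two contributions come out with opposite signs, one a positive multiple of $|\psi^+|^2$ and the other the same negative multiple of $|\psi^-|^2$, and a suitable normalization of $\alpha^+$ produces the stated constant $\frac{1}{2}$.

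The only real difficulty is this last computation, where the whole of the bookkeeping is concentrated: one must keep careful track, in the $\HC$-model, of the conjugation of the $\HC$-coefficients hidden in the Clifford action of odd elements, of the distinction between the two complex structures on $\Sigma$ — scalar multiplication by $i$, for which $H$ is $\C$-bilinear (the Clifford action of $-e_0\cdot e_1\cdot e_2\cdot e_3$), versus the standard one, right multiplication by $I$, which commutes with the Clifford action — and of the twist $\alpha\mapsto\overline\alpha$ entering the tangential Clifford action; each of these introduces a sign or a factor $i$ that has to be reconciled to land on $\frac{1}{2}$.
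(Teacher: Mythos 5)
The paper omits the proof of this proposition entirely (Section~\ref{sec hypersurfaces} states ``for briefness, we omit most of the proofs''), so there is no argument in the paper to compare against; I can only assess your proposal on its own terms. Your construction is the natural one: trivialize $\Sigma^+E$ by a parallel section $\alpha^+$, set $\alpha^-:=i\,e_0\cdot_{_E}\alpha^+$, and send $\psi=\psi^++\psi^-$ to $\psi^*:=\alpha^+\otimes\psi^++\alpha^-\otimes\psi^-\in\Sigma^{++}\oplus\Sigma^{--}=\Sigma^+$. Your verifications of $(\nabla_X\psi)^*=\nabla_X(\psi^*)$ and of $(X\cdot_{_M}\psi)^*=ie_0\cdot X\cdot\psi^*$ are correct and complete (the sign from $\overline{\alpha^-}=-\alpha^-$, the use of $e_0\cdot e_0=+1$, and the swap $X\cdot_{_M}\Sigma^{\pm}M\subset\Sigma^{\mp}M$ are all handled properly).

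The genuine gap is the third identity, and it is not a matter of normalization alone. Rescaling $\alpha^+$ multiplies \emph{both} contributions to $H(\psi^*,ie_1\cdot\psi^*)$ by the same positive real, so it fixes the magnitude of the constant but cannot affect the relative sign of the $|\psi^+|^2$- and $|\psi^-|^2$-terms, which is exactly the non-trivial content of the formula. And, as you observe but do not resolve, that sign depends on which complex structure ``$i$'' appears in $ie_1\cdot\psi^*$. Working in an adapted frame $\tilde{s}$ with $[\psi^*]=a(J+i K)+b(\1-iI)$ (so $a,b$ proportional to the coordinates of $\psi^+,\psi^-$), one finds $[e_1\cdot\psi^*]=I\,\tch{[\psi^*]}$ and then $H(\psi^*,e_1\cdot\psi^*)=2i\left(|a|^2-|b|^2\right)$. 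If $i$ is the right-multiplication-by-$I$ structure (the one for which the Clifford action of vectors is $\C$-linear, and which coincides with scalar multiplication by $\pm i$ on $\Sigma^{\pm}$), this yields $H(\psi^*,ie_1\cdot\psi^*)=2(|a|^2-|b|^2)$, with the stated sign; if $i$ is the scalar (volume-element) structure for which $H$ is $\C$-bilinear, the sign flips to $2(|b|^2-|a|^2)$. The former is also the only reading of (\ref{condition R1,2}) compatible with $e_1\cdot e_1\cdot\psi^*=-\psi^*$. So the last step requires both carrying out this computation and committing to the right-multiplication interpretation of $i$; your sketch correctly locates where the difficulty lies but leaves it open.
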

Setting $\psi\in \Sigma M$ such that $\psi^*=\varphi^+,$ and using (\ref{condition R1,2}), the equation $D\varphi=\vec H\cdot\varphi$ with $H(\varphi,\varphi)=1$ and $\vec{H}=H e_0$ reads 
\begin{equation}\label{spineur R1,2}
D_M\psi=iH\psi\hspace{1cm}\mbox{with}\hspace{1cm}|\psi^+|^2-|\psi^-|^2=\pm 1.
\end{equation}
Reciprocally, to a solution $\psi$ of (\ref{spineur R1,2}) corresponds a solution $\varphi$ of (\ref{systeme phi 2}), defined by $\varphi^+=\psi^*$ and $\varphi^-=\pm i e_1\cdot\psi^*.$ A solution of (\ref{spineur R1,2}) is thus equivalent to an isometric immersion of the surface in  $\R^{1,2}.$ We thus obtain a spinorial characterization of the isometric immersions of a Riemmanian surface in $\R^{1,2},$ which is different to the characterization obtained in \cite{LR}, where two spinor fields are needed.
\begin{rem}\label{rmq R1,2}
We also may obtain an explicit representation formula: let us consider the indefinite hermitian inner product on $\Sigma M$ defined by
$$\langle\psi,\psi'\rangle:=\langle\psi^+,\psi'^+\rangle-\langle\psi^-,\psi'^-\rangle$$
for all $\psi,\psi'\in\Sigma M,$ where the hermitian products on $\Sigma^+M$ and $\Sigma^-M$ are the usual inner products. If $\varphi=(1\pm i e_1)\cdot\psi^*$ where $\psi$ satisfies (\ref{spineur R1,2}),  it may be proved by a computation that 
\begin{eqnarray}\label{representation R1,2}
\hspace{-.5cm}\xi(X)&=&\langle\langle X\cdot\varphi,\varphi\rangle\rangle\nonumber\\
&=&\hspace{-.3cm}-\langle X\cdot_{_M}\psi,\overline{\psi}\rangle\pm J\left(\Re e\langle X\cdot_{_M}\psi,\alpha(\psi)\rangle +\Im m\langle X\cdot_{_M}\psi,\alpha(\psi)\rangle I\right)
\end{eqnarray}
where $\overline{\psi}=\psi^+-\psi^-$ and $\alpha:\Sigma M\rightarrow\Sigma M$ is a quaternionic structure. Observe that $|\overline{\psi}|^2=-|\alpha(\psi)|^2=\pm 1$ and $\langle\overline{\psi},\alpha(\psi)\rangle=0,$ so that $\xi(X)$ may be interpreted as the coordinates $\in i\R\oplus\C$ of $X\cdot_{_M}\psi$ in the orthonormal basis $(\overline{\psi},\alpha(\psi)).$ According to Section \ref{sec representation}, the immersion is given by $F=\int\xi.$ This formula is similar to the representation formula for surfaces in $\R^3$ obtained by Friedrich in \cite{Fr}. For sake of briefness we omitted the proof of (\ref{representation R1,2}); a direct proof may also be easily obtained, showing by a computation that formula (\ref{representation R1,2}) defines an isometric immersion in $\R^{1,2}\subset\HC.$
\end{rem}
\section{Weierstrass representation of flat surfaces with flat normal bundle in $\R^{1,3}$}\label{sec flat}
We study here the flat surfaces with flat normal bundle and regular Gauss map in $\R^{1,3}$ using spinors, and give an alternative proof of the main result in \cite{GMM}.
\subsection{The Gauss map of a spacelike surface in $\R^{1,3}$}\label{ss gauss map} 
We consider $\Lambda^2\R^{1,3},$ the vector space of bivectors of $\R^{1,3}$ endowed with its natural metric $\langle.,.\rangle$ (which has signature (3,3)). The Grassmannian of the oriented spacelike 2-planes in $\R^{1,3}$ identifies with the submanifold of unit and simple bivectors
$$\mathcal{Q}=\{\eta\in\Lambda^2\R^{1,3}:\ \langle \eta,\eta\rangle=1,\ \eta\wedge\eta=0\},$$
and the oriented Gauss map with the map
$$G:\ M\rightarrow \mathcal{Q},\ p\mapsto G(p)=u_1\wedge u_2,$$
where $(u_1,u_2)$ is a positively oriented orthonormal basis of $T_pM.$ The Hodge $*$ operator $\Lambda^2\R^{1,3}\rightarrow \Lambda^2\R^{1,3}$ is defined by the relation
\begin{equation}\label{i lambda2}
\langle *\eta,\eta'\rangle=\eta\wedge\eta'
\end{equation}
for all $\eta,\eta'\in \Lambda^2\R^{1,3},$ where we identify $\Lambda^4\R^{1,3}$ to $\R$ using the canonical volume element $e_0^o\wedge e_1^o\wedge e_2^o\wedge e_3^o$ on $\R^{1,3};$ here and below $(e_0^o,e_1^o,e_2^o,e_3^o)$ stands for the canonical basis of $\R^{1,3}$. It satisfies $*^2=-id_{\Lambda^2\R^{1,3}}$ and thus $i:=-*$ defines a complex structure on $\Lambda^2\R^{1,3}.$ We also define
\begin{equation}\label{H lambda2}
H(\eta,\eta')=\langle \eta,\eta'\rangle-i\ \eta\wedge\eta'\hspace{.5cm}\in\hspace{.5cm}\C
\end{equation}
for all $\eta,\eta'\in\ \Lambda^2\R^{1,3}.$ This is a $\C$-bilinear map on $\Lambda^2\R^{1,3},$ and we have
$$\mathcal{Q}=\{\eta\in\Lambda^2\R^{1,3}:\ H(\eta,\eta)=1\}.$$
The bivectors
$$E_1=e_2^o\wedge e_3^o,\ E_2=e_3^o\wedge e_1^o,\ E_3=e_1^o\wedge e_2^o$$
form a basis of $\Lambda^2\R^{1,3}$ as a vector space over $\C;$ this basis is such that $H(E_i,E_j)=\delta_{ij}$ for all $i,j.$ Identifying $\Lambda^2\R^{1,3}$ with the elements of order 2 of $Cl_0(1,3)\simeq \HC$ (see (\ref{identification Cl0})), and using the Clifford map (\ref{Clifford map}), we easily get
$$E_1=I,\ E_2=J,\ E_3=K$$
and
$$\Lambda^2\R^{1,3}=\{Z_1I+Z_2J+Z_3K\in\HC:(Z_1,Z_2,Z_3)\in\C^3\};$$
moreover, using this identification, the complex structure $i$ and the quadratic map $H$ defined above on $\Lambda^2\R^{1,3}$ coincide with the natural complex structure $i$ and the quadratic map $H$ defined on $\HC,$ and
$$\mathcal{Q}=\{Z_1I+Z_2J+Z_3K:\ Z_1^2+Z_2^2+Z_3^2=1\}\ =\ Spin(1,3)\ \cap\ \Im m\ \HC,$$
where $\Im m\ \HC$ stands for the linear space generated by $I,$ $J$ and $K$ in $\HC.$

We now suppose that the immersion of $M$ in $\R^{1,3}$ is given by some spinor field $\varphi\in\Gamma(\Sigma E\otimes\Sigma M)$ solution of $D\varphi=\vec{H}\cdot\varphi$ and such that $H(\varphi,\varphi)=1.$ We first express the Gauss map of the immersion in terms of $\varphi:$
\begin{lem}\label{gauss map function phi}
The Gauss map is given by 
\begin{eqnarray*}
G:M&\rightarrow&\mathcal{Q}\\
x&\mapsto&\langle\langle u_1\cdot u_2\cdot \varphi,\varphi\rangle\rangle
\end{eqnarray*}
where, for all $x\in M,$ $(u_1,u_2)$ is a positively oriented and orthonormal basis of $T_xM.$
\end{lem}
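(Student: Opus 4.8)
\emph{Proof proposal.} The plan is to reduce the identity to a short algebraic computation in $\HC$. First I would use that the immersion is $F=\int\xi$ with $dF=\xi$ (Proposition~\ref{prop fundamental xi} and Theorem~\ref{theorem second integration}): since $\xi$ is an isometry, for $(u_1,u_2)$ a positively oriented orthonormal basis of $T_xM$ the pair $(\xi(u_1),\xi(u_2))$ is a positively oriented orthonormal basis of the tangent plane of $F(M)$ at $F(x)$, so that $G(x)=\xi(u_1)\wedge\xi(u_2)$. Since $\xi(u_1)$ and $\xi(u_2)$ are orthogonal vectors of $\R^{1,3}$, their wedge product equals their Clifford product $\xi(u_1)\cdot\xi(u_2)$ in $Cl(1,3)$; under the identification of $\Lambda^2\R^{1,3}$ with the order-two part of $Cl_0(1,3)\simeq\Im m\,\HC$ used in Section~\ref{ss gauss map}, it then remains to compute this product in terms of $\varphi$.

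For that I would work in a local section $\tilde s$ of $\tilde Q$ and set $p:=[\varphi]\in\HC$, which lies in $Spin(1,3)$ because $H(\varphi,\varphi)=1$, so that $\overline p\,p=p\,\overline p=\1$. From the matrix representation (\ref{Clifford map}) one reads off that if $v,v'\in\R^{1,3}\subset\HC$ then $v\cdot v'$ is represented in $Cl_0(1,3)\simeq\HC$ by $v\,\bigtch{v'}$. Using the rule $[u_j\cdot\varphi]=[u_j]\,\bigtch p$ for the odd-degree elements $u_j$ (Section~\ref{notation generale}), one gets $v_j:=\xi(u_j)=\langle\langle u_j\cdot\varphi,\varphi\rangle\rangle=\overline p\,[u_j]\,\bigtch p$, hence $\bigtch{v_j}=\bigtch{\overline p}\,\bigtch{[u_j]}\,p$; multiplying and collapsing $\bigtch p\,\bigtch{\overline p}=\bigtch{p\,\overline p}=\1$ yields
$$G(x)=v_1\,\bigtch{v_2}=\overline p\,[u_1]\,\bigtch{[u_2]}\,p.$$

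Finally I would identify the right-hand side with $\langle\langle u_1\cdot u_2\cdot\varphi,\varphi\rangle\rangle$: the even-degree element $u_1\cdot u_2\in Cl_0(E\oplus TM)$ is represented by $[u_1]\,\bigtch{[u_2]}$ and acts on spinors by left multiplication, so $[u_1\cdot u_2\cdot\varphi]=[u_1]\,\bigtch{[u_2]}\,p$ and thus $G(x)=\overline p\,[u_1\cdot u_2\cdot\varphi]=\langle\langle u_1\cdot u_2\cdot\varphi,\varphi\rangle\rangle$. I expect no conceptual difficulty here; the only real care needed is the bookkeeping — keeping straight the quaternionic conjugation $\overline{\ }$, the complex conjugation $\bigtch{\ }$ of the coefficients, and the dictionary between the abstract Clifford bundle $Cl(E\oplus TM)$ and $Cl(1,3)$ in an adapted frame — together with checking that the orientation conventions really give $G(x)=\xi(u_1)\wedge\xi(u_2)$ and not its opposite.
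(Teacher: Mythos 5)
Your argument is correct and follows essentially the same route as the paper: express $G(x)$ as the Clifford product $\xi(u_1)\cdot\xi(u_2)$ (which equals the wedge for orthogonal vectors), represent it as $v_1\bigtch{v_2}$ in $\HC$, and then collapse $\bigtch{[\varphi]}\,\bigtch{\overline{[\varphi]}}=\1$ using $H(\varphi,\varphi)=1$ to recover $\overline{[\varphi]}[u_1]\bigtch{[u_2]}[\varphi]=\langle\langle u_1\cdot u_2\cdot\varphi,\varphi\rangle\rangle$. The only difference is that you spell out the $\bigtch{\ }$-algebra step and the orientation caveat a bit more explicitly than the paper does; the computation itself is identical.
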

\begin{proof}
Setting $v_1=\langle\langle u_1\cdot\varphi,\varphi\rangle\rangle$ and $v_2=\langle\langle u_2\cdot\varphi,\varphi\rangle\rangle\in\R^{1,3}\subset\HC,$ the basis $(v_1,v_2)$ is an orthonormal basis of the immersion of $M$ in $\R^{1,3}\subset\HC$ (Theorem \ref{theorem second integration} (1)), and 
$$v_1\wedge v_2\simeq\left(\begin{array}{cc} 0&v_1\\\tch{v_1}&0\end{array}\right)\left(\begin{array}{cc} 0&v_2\\\tch{v_2}&0\end{array}\right)=\left(\begin{array}{cc} v_1\ \tch{v_2}&0\\0&\tch{v_1}\ v_2\end{array}\right)\simeq v_1\ \tch{v_2} ,$$
since the wedge product $v_1\wedge v_2\in \Lambda^2\R^{1,3}\subset Cl_0(\R^{1,3})$ of the two orthogonal vectors $v_1$ and $v_2\in\R^{1,3}\subset Cl_1(\R^{1,3})$ identifies with their Clifford product.
But
\begin{eqnarray*}
v_1\cdot\bigtch{v_2}&=&\langle\langle u_1\cdot\varphi,\varphi\rangle\rangle\bigtch{\langle\langle u_2\cdot\varphi,\varphi\rangle\rangle}=\left(\overline{[\varphi]}[u_1]\tch{[\varphi]}\right)\bigtch{\left(\overline{[\varphi]}[u_2]\tch{[\varphi]}\right)}\\
&=&\overline{[\varphi]}[u_1]\tch{[u_2]}[\varphi]=\langle\langle u_1\cdot u_2\cdot\varphi,\varphi\rangle\rangle,
\end{eqnarray*}
where $[u_1],$ $[u_2]$ and $[\varphi]\in \HC$ represent $u_1,$ $u_2$ and $\varphi$ in some frame $\tilde{s}$ of $\tilde{Q}.$
\end{proof}
The Gauss map is linked to the second fundamental form as follows:
\begin{lem}\label{lemma expr eta G} Define $\tilde{\eta}:=\langle\langle\eta\cdot\varphi,\varphi\rangle\rangle$ where $\eta$ is linked to the second fundamental form $B$ by (\ref{relation eta B}). The form $\tilde{\eta}$ satisfies
\begin{equation}\label{expr eta G}
\tilde{\eta}=\frac{1}{2}G^{-1}dG.
\end{equation}
In particular $\tilde{\eta}$ belongs to $\Omega^1(M,\mathcal{G})$ where $\mathcal{G}=\Im m\ \HC$ is the Lie algebra of $Spin(1,3).$
\end{lem}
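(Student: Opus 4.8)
The plan is to reduce both sides of (\ref{expr eta G}) to an identity in the complex quaternions $\HC$ by passing to an adapted local frame, exploiting that $H(\varphi,\varphi)=1$ makes the coordinate of $\varphi$ invertible. Fix $p\in M$ and choose a positively oriented orthonormal frame $(e_0,e_1,e_2,e_3)$ of $E\oplus TM$ near $p$, parallel at $p$, together with an adapted section $\tilde s$ of $\tilde Q$; write $\xi:=[\varphi]\in\HC$ for the coordinate of $\varphi$ in $\tilde s$. By the conventions of Section \ref{notation generale} one computes $[e_2\cdot e_3]=I$, so Lemma \ref{gauss map function phi} gives $G=\langle\langle e_2\cdot e_3\cdot\varphi,\varphi\rangle\rangle=\overline{\xi}\,I\,\xi$ on the domain of $\tilde s$, and likewise $\tilde{\eta}(X)=\langle\langle\eta(X)\cdot\varphi,\varphi\rangle\rangle=\overline{\xi}\,[\eta(X)]\,\xi$ for $X\in TM$. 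Since $H(\varphi,\varphi)=1$, $\overline{\xi}\xi=\xi\overline{\xi}=1$, hence $\xi^{-1}=\overline{\xi}$; and $G$, being a unit simple bivector, lies in $\Im m\,\HC$ with $G^2=-1$, so $G^{-1}=\overline{G}=-G$.

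Next I differentiate $G=\overline{\xi}\,I\,\xi$ along $X\in TM$ at the point $p$, using the Killing-type equation $\nabla_X\varphi=\eta(X)\cdot\varphi$ of Lemma \ref{lem1}: since the frame is parallel at $p$, we have $X(\xi)=[\eta(X)]\,\xi$ at $p$. The structural point is that $\eta(X)=-\tfrac12\sum_{j}e_j\cdot B(e_j,X)$ is a \emph{mixed} bivector, lying in $TM\wedge E$, so under $Cl_0(1,3)\simeq\HC$ its coordinate satisfies $[\eta(X)]\in\C J\oplus\C K$; in particular $\overline{[\eta(X)]}=-[\eta(X)]$, and $I$ anticommutes with $[\eta(X)]$ (whereas $e_2\wedge e_3\mapsto I$ and $e_0\wedge e_1\mapsto iI$ are precisely the bivectors carrying the $I$-direction). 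The product rule $dG(X)=\overline{X(\xi)}\,I\,\xi+\overline{\xi}\,I\,X(\xi)$ together with $\overline{[\eta(X)]\xi}=\overline{\xi}\,\overline{[\eta(X)]}=-\overline{\xi}[\eta(X)]$ then yields, at $p$,
$$dG(X)=\overline{\xi}\,\bigl(I[\eta(X)]-[\eta(X)]I\bigr)\,\xi=2\,\overline{\xi}\,I\,[\eta(X)]\,\xi,$$
the last equality by anticommutativity. A direct computation shows the same formula in an arbitrary adapted frame, the spin-connection contributions to $dG(X)$ cancelling, so parallelism at $p$ is only a convenience.

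Combining this with $G^{-1}=-\overline{\xi}\,I\,\xi$, $\xi\overline{\xi}=1$ and $I^2=-1$ gives
$$G^{-1}dG(X)=\bigl(-\overline{\xi}\,I\,\xi\bigr)\bigl(2\,\overline{\xi}\,I\,[\eta(X)]\,\xi\bigr)=-2\,\overline{\xi}\,I^2\,[\eta(X)]\,\xi=2\,\overline{\xi}\,[\eta(X)]\,\xi=2\,\tilde{\eta}(X),$$
which is (\ref{expr eta G}); since $p$ was arbitrary, this holds everywhere on $M$. The remaining assertion is then immediate: from $\tilde{\eta}(X)=\overline{\xi}\,[\eta(X)]\,\xi$ and $\overline{[\eta(X)]}=-[\eta(X)]$ one gets $\overline{\tilde{\eta}(X)}=-\tilde{\eta}(X)$, i.e. $\tilde{\eta}(X)\in\Im m\,\HC=\mathcal{G}$; equivalently, $G^{-1}dG$ is the pull-back by $G:M\to Spin(1,3)$ of the Maurer--Cartan form, hence automatically $\mathcal{G}$-valued.

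The routine part is the Clifford bookkeeping behind $[e_2\cdot e_3]=I$ and $[\eta(X)]\in\C J\oplus\C K$ — keeping straight the conjugations $\widehat{q}$ and $\overline{q}$ and the left/right module structures of Section \ref{notation generale}. The one step that needs genuine care is checking that the spin-connection contribution to $dG(X)$ cancels; the only conceptual input is the observation that a mixed bivector has no $I$-component in $\HC$ and therefore anticommutes with $I$.
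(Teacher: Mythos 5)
Your argument is correct and follows essentially the same route as the paper's proof: both pass to the $\HC$-coordinates $\xi=[\varphi]$ in an adapted section of $\tilde{Q}$, differentiate $G$ using the Killing-type equation at a point where the frame is parallel, exploit $\overline{\xi}\xi=\xi\overline{\xi}=1$, and use that a mixed bivector $[\eta(X)]$ lies in $\C J\oplus\C K$ so that $\overline{[\eta(X)]}=-[\eta(X)]$ and anticommutes with $I$. The only cosmetic difference is that you make this anticommutation explicit, whereas the paper passes from $dG(X)$ to $2\langle\langle u_1\cdot u_2\cdot\eta(X)\cdot\varphi,\varphi\rangle\rangle$ and then to $2G\tilde\eta(X)$ via the identity $\overline{\xi}(ab)\xi=(\overline{\xi}a\xi)(\overline{\xi}b\xi)$, leaving the equality of the two derivative terms implicit.
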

\begin{proof}
We suppose that $(u_1,u_2)$ is a moving frame on $M$ such that $\nabla {u_i}_{|p}=0,$ and, using  $\nabla_X\varphi=\eta(X)\cdot\varphi$, we compute
\begin{eqnarray*}
dG(X)&=&\langle\langle u_1\cdot u_2\cdot\nabla_X\varphi,\varphi\rangle\rangle+\langle\langle u_1\cdot u_2\cdot\varphi,\nabla_X\varphi\rangle\rangle\\
&=&\langle\langle u_1\cdot u_2\cdot\eta(X)\cdot\varphi,\varphi\rangle\rangle+\langle\langle u_1\cdot u_2\cdot\varphi,\eta(X)\cdot\varphi\rangle\rangle\\
&=&2\langle\langle u_1\cdot u_2\cdot\eta(X)\cdot\varphi,\varphi\rangle\rangle.
\end{eqnarray*}
But 
\begin{eqnarray*}
\langle\langle u_1\cdot u_2\cdot\eta(X)\cdot\varphi,\varphi\rangle\rangle&=&\overline{[\varphi]}\ \left([u_1\cdot u_2]\ [\eta(X)]\right)\ [\varphi]\\&=&\left(\overline{[\varphi]}\ [u_1\cdot u_2]\ [\varphi]\right)\ \left(\overline{[\varphi]}\ [\eta(X)]\ [\varphi]\right)\\&=&\langle\langle u_1\cdot u_2\cdot\varphi,\varphi\rangle\rangle\langle\langle\eta(X)\cdot\varphi,\varphi\rangle\rangle,
\end{eqnarray*}
where $[\varphi],$ $[u_1\cdot u_2]$ and $[\eta(X)]\in\HC$ represent $\varphi,$ $u_1\cdot u_2$ and $\eta(X)$ respectively in some local frame $\tilde{s}$ of $\tilde{Q}.$ Thus
$$dG(X)=2G\ \tilde{\eta}(X),$$
which implies (\ref{expr eta G}).
\end{proof}

We define the \textit{vectorial product} of two vectors $\xi,\xi'\in\Im m\ \HC$ by
$$\xi \times \xi':=\frac{1}{2}\left(\xi\xi'-\xi'\xi\right)\ \in\ \Im m\ \HC.$$ 
It is such that
$$\langle\langle \xi,\xi'\rangle\rangle=H(\xi,\xi')\1+\xi \times \xi'.$$
We also define the \textit{mixed product} of three vectors $\xi,\xi',\xi''\in \Im m\ \HC$ by
$$[\xi,\xi',\xi'']:= H(\xi \times \xi',\xi'')\ \in\ \C.$$
The mixed product is a \textit{complex volume form} on $\Im m\ \HC$ (i.e. with complex values, $\C$-linear and skew-symmetric w.r.t. the three arguments); it induces a natural \textit{complex area form} $\omega_{\mathcal{Q}}$ on $\mathcal{Q}$ by
$${\omega_{\mathcal{Q}}}_p(\xi,\xi'):=[\xi,\xi',p]$$ 
for all $p\in\mathcal{Q}$ and all $\xi,\xi'\in T_p\mathcal{Q}.$ Note that ${\omega_{\mathcal{Q}}}_p(\xi,\xi')=0$ if and only if $\xi$ and $\xi'$ are dependent over $\C.$

We now compute the pull-back by the Gauss map of the area form $\omega_Q:$
\begin{prop}\label{prop pull back}
We have
\begin{equation}\label{formula pull-back omega}
G^*\omega_{\mathcal{Q}}=(K+iK_N)\omega_M,
\end{equation}
where $\omega_M$ is the form of area of $M.$ In particular, $K=K_N=0$ at $x_o\in M$ if and only if the linear space $dG_{x_o}(T_{x_o}M)$ belongs to some complex line in $T_{G(x_o)}\mathcal{Q}.$
\end{prop}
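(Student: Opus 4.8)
The plan is to pull $\omega_{\mathcal{Q}}$ back along the Gauss map expressed through $\varphi$, and to reduce the whole computation to the Clifford-algebra identity already established in Lemma \ref{lem calculs courbure}(3). Fix $x\in M$ and positively oriented orthonormal frames $(e_2,e_3)$ of $T_xM$ and $(e_0,e_1)$ of $E$, and write $\eta_j:=\eta(e_j)$. By Lemma \ref{gauss map function phi} we have $G=\langle\langle e_2\cdot e_3\cdot\varphi,\varphi\rangle\rangle$, and by Lemma \ref{lemma expr eta G} we have $dG(e_j)=2\,G\,\tilde{\eta}(e_j)=2\,\langle\langle e_2\cdot e_3\cdot\varphi,\varphi\rangle\rangle\,\langle\langle\eta_j\cdot\varphi,\varphi\rangle\rangle$. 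Writing the spinors in a local section of $\tilde{Q}$ exactly as in the proof of Lemma \ref{lemma expr eta G}, and using that $e_2\cdot e_3$ and $\eta_j$ lie in $Cl_0(E\oplus TM)$ together with the fact that $H(\varphi,\varphi)=1$ makes the coordinates $[\varphi]$ invertible in $\HC$ (with $[\varphi]^{-1}=\overline{[\varphi]}$), the product of the two pairings collapses to
$$dG(e_j)=2\,\langle\langle e_2\cdot e_3\cdot\eta_j\cdot\varphi,\varphi\rangle\rangle .$$

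I would then compute the vectorial product $dG(e_2)\times dG(e_3)=\tfrac12\left(dG(e_2)\,dG(e_3)-dG(e_3)\,dG(e_2)\right)$. By the same componentwise manipulation, $dG(e_2)\,dG(e_3)=4\,\langle\langle e_2\cdot e_3\cdot\eta_2\cdot e_2\cdot e_3\cdot\eta_3\cdot\varphi,\varphi\rangle\rangle$. The key algebraic remark is that $e_2\cdot e_3$ anticommutes with $\eta_j$: indeed $\eta_j=-\tfrac12\sum_{k=2,3}e_k\cdot B(e_k,e_j)$ is a sum of bivectors $e_k\wedge\nu$ with $e_k\in TM$ and $\nu\in E$, and each such bivector anticommutes with $e_2\cdot e_3$ in $Cl(E\oplus TM)$ (because $e_2\cdot e_3$ anticommutes with $e_2$ and with $e_3$ and commutes with every normal vector). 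Since $(e_2\cdot e_3)^2=-1$, this gives $e_2\cdot e_3\cdot\eta_2\cdot e_2\cdot e_3=-\eta_2\,(e_2\cdot e_3)^2=\eta_2$, hence $dG(e_2)\,dG(e_3)=4\,\langle\langle\eta_2\cdot\eta_3\cdot\varphi,\varphi\rangle\rangle$ and symmetrically $dG(e_3)\,dG(e_2)=4\,\langle\langle\eta_3\cdot\eta_2\cdot\varphi,\varphi\rangle\rangle$. Therefore
$$dG(e_2)\times dG(e_3)=2\,\langle\langle(\eta_2\cdot\eta_3-\eta_3\cdot\eta_2)\cdot\varphi,\varphi\rangle\rangle .$$

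Now the curvature enters. Since $\varphi$ solves the Dirac equation, Theorem \ref{thm1} guarantees that $B$ satisfies the Gauss and Ricci equations, so Lemma \ref{lem calculs courbure}(3) yields $\eta_2\cdot\eta_3-\eta_3\cdot\eta_2=\tfrac12 K\,e_2\cdot e_3+\tfrac12 K_N\,e_0\cdot e_1$. Substituting this and using Lemma \ref{gauss map function phi} again gives $dG(e_2)\times dG(e_3)=K\,G+K_N\,\langle\langle e_0\cdot e_1\cdot\varphi,\varphi\rangle\rangle$. It remains to identify the last pairing with $iG$: from $(e_2\cdot e_3)^2=-1$ one gets $(-e_0\cdot e_1\cdot e_2\cdot e_3)\cdot(e_2\cdot e_3\cdot\varphi)=e_0\cdot e_1\cdot\varphi$, i.e. $e_0\cdot e_1\cdot\varphi=i\cdot(e_2\cdot e_3\cdot\varphi)$ for the complex structure $i=-e_0\cdot e_1\cdot e_2\cdot e_3$ of Section \ref{secpflem}; since $\langle\langle.,.\rangle\rangle$ is $\C$-bilinear for this complex structure (Section \ref{twisted spinor bundle}), $\langle\langle e_0\cdot e_1\cdot\varphi,\varphi\rangle\rangle=i\,G$. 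Hence $dG(e_2)\times dG(e_3)=(K+iK_N)\,G$, and by the definitions of the mixed product and of $\omega_{\mathcal{Q}}$,
$$G^*\omega_{\mathcal{Q}}(e_2,e_3)=[dG(e_2),dG(e_3),G]=H\left(dG(e_2)\times dG(e_3),\,G\right)=(K+iK_N)\,H(G,G)=K+iK_N ,$$
which, since $\omega_M(e_2,e_3)=1$, is formula (\ref{formula pull-back omega}). The ``in particular'' statement then follows immediately: $G^*\omega_{\mathcal{Q}}$ vanishes at $x_o$ exactly when $K(x_o)=K_N(x_o)=0$, and since $\omega_{\mathcal{Q}}$ is a complex area form — so its value at $G(x_o)$ on a pair of vectors vanishes precisely when those vectors are dependent over $\C$ — this is equivalent to $dG_{x_o}(e_2)$ and $dG_{x_o}(e_3)$ being $\C$-dependent, i.e. to $dG_{x_o}(T_{x_o}M)$ lying in a complex line of $T_{G(x_o)}\mathcal{Q}$.

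The only point that really requires care is the bookkeeping when moving between products in $\HC$ and Clifford products — the two componentwise collapses above and the anticommutation $e_2\cdot e_3\leftrightarrow\eta_j$ — where one must track signs and the conjugation built into the Clifford map; once those are pinned down, the invariants $K$ and $K_N$ are handed to us directly by Lemma \ref{lem calculs courbure}(3), with no further input needed.
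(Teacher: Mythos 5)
Your proposal is correct and follows essentially the same route as the paper: both start from $dG=2G\tilde\eta$, reduce to the commutator $\eta_2\cdot\eta_3-\eta_3\cdot\eta_2$ via Lemma \ref{lem calculs courbure}(3), and identify $\langle\langle e_0\cdot e_1\cdot\varphi,\varphi\rangle\rangle=iG$ through $e_0\cdot e_1=i\,e_2\cdot e_3$. The only difference is in bookkeeping: where the paper manipulates the $\HC$-valued forms directly (using $G^{-1}=-G$, $GdG=-dG\,G$, $G^2=-1$ to pass from $dG\,dG$ to $\tilde\eta\tilde\eta$), you push everything through to the Clifford algebra and exploit the anticommutation of $e_2\cdot e_3$ with $\eta_j$ together with $(e_2\cdot e_3)^2=-1$ — which is exactly the Clifford-level avatar of the same quaternionic identities.
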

\begin{proof}
By definition,
\begin{equation}\label{G*omega1}
G^*\omega_Q(u,v)=H(dG(u)\times dG(v),G)
\end{equation}
with
\begin{equation}\label{G*omega2}
dG(u)\times dG(v)=\frac{1}{2}\left(dG(u)dG(v)-dG(v)dG(u)\right).
\end{equation}
Using successively $\tilde{\eta}=\frac{1}{2}G^{-1}dG$ (Lemma \ref{lemma expr eta G}), $G^{-1}=\overline{G}=-G,$ $GdG=-dG G$ (since $GG=-G\overline{G}=-1$) and $GG=-1,$ we easily get
\begin{equation}\label{eta tilde 2 dG}
\tilde{\eta}(u)\tilde{\eta}(v)-\tilde{\eta}(v)\tilde{\eta}(u)=\frac{1}{4}\left(dG(u)dG(v)-dG(v)dG(u)\right).
\end{equation}
We now compute the left-hand side of (\ref{eta tilde 2 dG}). Since
\begin{eqnarray*}
\langle\langle\eta(u)\cdot\varphi,\varphi\rangle\rangle\langle\langle\eta(v)\cdot\varphi,\varphi\rangle\rangle&=&\left(\overline{[\varphi]}\ [\eta(u)]\ [\varphi]\right)\ \left(\overline{[\varphi]}\ [\eta(v)]\ [\varphi]\right)\\
&=&\overline{[\varphi]}\ [\eta(u)]\ [\eta(v)]\ [\varphi]\\
&=&\langle\langle\eta(u)\cdot\eta(v)\cdot\varphi,\varphi\rangle\rangle,
\end{eqnarray*}
where $[\varphi],$ $[\eta(u)]$ and $[\eta(v)]\in\HC$ represent $\varphi,$ $\eta(u)$ and $\eta(v)$ respectively in some local frame $\tilde{s}$ of $\tilde{Q},$ we get
\begin{equation}\label{eta tilde 2 eta}
\tilde{\eta}(u)\tilde{\eta}(v)-\tilde{\eta}(v)\tilde{\eta}(u)=\langle\langle\left(\eta(u)\cdot\eta(v)-\eta(v)\cdot\eta(u)\right)\cdot\varphi,\varphi\rangle\rangle.
\end{equation}
We have seen in Lemma \ref{lem calculs courbure} (3) that
$$\eta(e_2)\cdot\eta(e_3)-\eta(e_3)\cdot\eta(e_2)=\frac{1}{2}Ke_2\cdot e_3+\frac{1}{2}K_Ne_0\cdot e_1.$$
Thus, using $\langle\langle e_2\cdot e_3\cdot\varphi,\varphi\rangle\rangle=G$ and $\langle\langle e_0\cdot e_1\cdot\varphi,\varphi\rangle\rangle=iG$ (by a direct computation, or since, for $i:=-e_0\cdot e_1\cdot e_2\cdot e_3,$ we have $e_0\cdot e_1=ie_2\cdot e_3$ and the pairing $\langle\langle.,.\rangle\rangle$ is $\C$-linear with respect to this complex structure, see Section \ref{twisted spinor bundle}), (\ref{eta tilde 2 eta}) implies
$$\tilde{\eta}(e_2)\tilde{\eta}(e_3)-\tilde{\eta}(e_3)\tilde{\eta}(e_2)=\frac{1}{2}\left(K+iK_N\right)G.$$
By (\ref{G*omega1}), (\ref{G*omega2}) and (\ref{eta tilde 2 dG}), and since $H(G,G)=1,$ the result follows.
\end{proof}
\begin{rem}\label{gauss-bonnet whitney}
Using that $d\omega_Q=0$ and the fact that the euclidean sphere 
$$S^2=\{X_1I+X_2J+X_3K:\ X_1,X_2,X_3\in\R,\ X_1^2+X_2^2+X_3^2=1\}\hspace{.5cm}\subset\hspace{.3cm} Q$$ 
is a deformation retract of $Q,$ it is easy to deduce the Gauss-Bonnet and the Whitney theorems from (\ref{formula pull-back omega}): indeed, $G:M\rightarrow Q$ is homotopically equivalent to some function $H:M\rightarrow S^2\subset Q,$ and
$$\int_M(K+iK_N)\omega_M=\int_M G^*\omega_Q=\int_M H^*\omega_Q=\int_M H^*\omega_{S^2}=4\pi.\deg H.$$
Thus, 
$$\int_MK\omega_M=4\pi.\deg H\hspace{1cm}\mbox{and} \hspace{1cm}\int_MK_N\omega_M=0.$$
\end{rem}
As a consequence of the proposition, if $K=K_N=0$ and if $G:M\rightarrow\mathcal{Q}$ is a regular map (i.e. if $dG_x$ is injective at every point $x$ of $M$), there is a unique complex structure $\mathcal{J}$ on $M$ such that
$$dG_x(\mathcal{J}u)=i.dG_x(u)$$
for all $x\in M$ and all $u\in T_xM.$ This complex structure coincides with the complex structure considered in \cite{GMM}. Note that $M$ cannot be compact under these hypotheses, since, on the Riemann surface $(M,\mathcal{J}),$ the Gauss map $G=G_1I+G_2J+G_3K$ is globally defined, non-constant, and such that  $G_1,G_2$ and $G_3$ are holomorphic functions. Thus, assuming moreover that $M$ is simply connected, by the uniformization theorem $(M,\mathcal{J})$ is conformal to an open set of $\C,$ and thus admits a globally defined conformal parameter $z=x+iy.$ 

\subsection{Spinorial description of flat immersions with flat normal bundle}\label{spinor description}

In this section we suppose that the hypotheses of Corollary \ref{corollary first integration} hold, that $M$ is simply connected and that the bundles $TM$ and $E$ are flat ($K=K_N=0).$ Recall that the bundle $\Sigma:=\Sigma E\otimes \Sigma M$ is associated to the principal bundle $\tilde{Q}$ and to the representation $\rho_1\otimes\rho_2$ of the structure group $Spin(1,1)\times Spin(2)$ in $\HC;$ see Section \ref{twisted spinor bundle}. Since the curvatures $K$ and $K_N$ are zero, the spinorial connection on the bundle $\tilde{Q}$ is flat, and $\tilde{Q}$ admits a parallel local section $\tilde{s};$ since $M$ is simply connected, the section $\tilde{s}$ is in fact globally defined. We consider $\varphi\in\Gamma(\Sigma)$ a solution of
$$D\varphi=\vec H\cdot\varphi$$
such that $H(\varphi,\varphi)=1,$ and $g:M\rightarrow Spin(1,3)\subset\HC$ such that
$$\varphi=:[\tilde{s},g]\hspace{.5cm}\in\hspace{.5cm}\tilde{Q}\times\HC/\rho_1\otimes\rho_2\hspace{.2cm}\simeq\hspace{.2cm}\Sigma E\otimes\Sigma M,$$
that is, $g\in\HC$ represents $\varphi$ in the parallel section $\tilde{s}.$ With the notation introduced Section \ref{notation generale}, we write $g=[\varphi]\in\HC.$
\subsubsection{The basic result}
\begin{thm}\label{prop etap}
Assume that the Gauss map $G$ of the immersion defined by $\varphi$ is regular, and consider the complex structure $\mathcal{J}$ on $M$ such that $G$ is an holomorphic map (see the end of Section \ref{ss gauss map}). Then, the 1-form $\eta':=dg\ g^{-1}$ represents the form $\eta$ in the frame $\tilde{s},$ and reads
\begin{equation}\label{etap theta}
\eta'=\theta_1 J+\theta_2 K,
\end{equation}
where $\theta_1$ and $\theta_2$ are two holomorphic 1-forms. 
\end{thm}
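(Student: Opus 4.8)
The plan is to read the shape of $\eta'$ off the Killing equation satisfied by $\varphi$, and then to transport the holomorphicity of the Gauss map onto $\eta'$ through the conjugation relation linking $\eta'$ with $\tilde{\eta}:=\langle\langle \eta\cdot\varphi,\varphi\rangle\rangle$.

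First I would check that $\eta'=dg\,g^{-1}$ represents $\eta$ in $\tilde{s}$ and already has the announced algebraic shape. Since $\tilde{s}$ is parallel the connection form vanishes in it, so $[\nabla_X\varphi]=dg(X)$; on the other hand $\varphi$ solves the Killing equation $\nabla_X\varphi=\eta(X)\cdot\varphi$ of Lemma~\ref{lem1}, and $\eta(X)=-\frac{1}{2}\sum_{j=2,3}e_j\cdot B(e_j,X)$ has even Clifford degree, so by the conventions of Section~\ref{notation generale} one gets $[\nabla_X\varphi]=[\eta(X)]\,g$. Comparing the two expressions yields $[\eta(X)]=dg(X)\,g^{-1}=\eta'(X)$, the first assertion. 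Moreover $e_j\in TM$ and $B(e_j,X)\in E$ are orthogonal, hence $e_j\cdot B(e_j,X)=e_j\wedge B(e_j,X)$ lies in the subbundle $E\otimes TM$ of mixed bivectors; under the identification $Cl_0(E\oplus TM)\simeq\HC$ of Section~\ref{notation generale} a short computation gives $e_0\wedge e_2\mapsto iJ$, $e_1\wedge e_2\mapsto K$, $e_0\wedge e_3\mapsto iK$, $e_1\wedge e_3\mapsto -J$, so $E\otimes TM$ is carried onto $\C J\oplus\C K$. Therefore $\eta'=\theta_1 J+\theta_2 K$ for two $\C$-valued $1$-forms $\theta_1,\theta_2$ on $M$.

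Next I would prove that $\theta_1$ and $\theta_2$ are of type $(1,0)$ for $\mathcal{J}$. Writing $\varphi$ in $\tilde{s}$ and using $\overline{g}=g^{-1}$ (which holds because $H(\varphi,\varphi)=1$), the definition of the pairing gives $\tilde{\eta}(X)=\overline{g}\,[\eta(X)]\,g=g^{-1}\eta'(X)g$, that is $\eta'=g\,\tilde{\eta}\,g^{-1}$, while by Lemma~\ref{lemma expr eta G} we have $\tilde{\eta}=\frac{1}{2}G^{-1}dG$. By the definition of $\mathcal{J}$ (end of Section~\ref{ss gauss map}), $dG(\mathcal{J}X)=i\,dG(X)$, where $i$ denotes the central scalar complex structure of $\HC$, namely the one for which $H$ and $\langle\langle\cdot,\cdot\rangle\rangle$ are $\C$-bilinear (see Section~\ref{twisted spinor bundle}); since $i\,\textit{1}$ commutes with every element of $\HC$, it can be pulled out past $G^{-1}$ and past conjugation by $g$, so that $\eta'(\mathcal{J}X)=i\,\eta'(X)$. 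As $J$ and $K$ are $\C$-linearly independent in $\HC$, this forces $\theta_k(\mathcal{J}X)=i\,\theta_k(X)$ for $k=1,2$.

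It then remains to show $d\theta_1=d\theta_2=0$, since a closed $(1,0)$-form on a Riemann surface is holomorphic. For this I would use that in the parallel frame $\tilde{s}$ the covariant exterior derivative $d^{\nabla}\eta$ coincides with the ordinary exterior derivative $d\eta'$; by Lemma~\ref{lem calculs courbure}(2) and the Codazzi equation, which holds because $\varphi$ solves the Dirac equation (Theorem~\ref{thm1}), one has $d^{\nabla}\eta=0$, hence $d\eta'=0$ and $d\theta_1=d\theta_2=0$. Equivalently, since $g$ is globally defined, the Maurer--Cartan identity $d\eta'=\eta'\wedge\eta'$ holds, and substituting $\eta'=\theta_1 J+\theta_2 K$ with $J^2=K^2=-\textit{1}$ and $JK=-KJ=I$ gives $d\eta'=2(\theta_1\wedge\theta_2)I$, which vanishes because two $(1,0)$-forms on a surface wedge to zero. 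I expect the only genuine obstacle to be the bookkeeping with the two complex structures of $\HC$ (right multiplication by $I$ for the spinorial representation versus the central scalar $i=-e_0\cdot e_1\cdot e_2\cdot e_3$): one has to make sure the complex structure implicit in the condition that $G$ be holomorphic is the central one, and that it is precisely this structure that survives conjugation by $g$, so that holomorphicity genuinely descends from $G$ to $\theta_1$ and $\theta_2$.
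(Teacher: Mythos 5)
Your proof is correct. The first two steps (that $\eta'$ represents $\eta$ in the parallel frame $\tilde{s}$, and that it takes values in $\C J\oplus\C K$) follow the paper's computation. The holomorphicity argument, however, takes a genuinely different route. The paper introduces Lemma~\ref{rmk lift}, which presents the projection $p\colon Spin(1,3)\to\mathcal{Q}$, $g\mapsto g^{-1}Ig$, as a Hopf-type $S^1_\C$-fibration with an $i$-compatible horizontal distribution $\C J\oplus\C K$, reads the shape $\eta'=\theta_1 J+\theta_2 K$ as saying that $g$ is a \emph{horizontal} lift of $G$, and then transfers the holomorphicity of $G$ to $g$ via $dG=dp\circ dg$; holomorphicity of $\theta_1,\theta_2$ then follows because $\eta'=dg\,g^{-1}$ is built from the holomorphic map $g$. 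You instead transfer the $(1,0)$-property directly to $\eta'$ using $\eta'=g\,\tilde\eta\,g^{-1}$ and $\tilde\eta=\tfrac12 G^{-1}dG$, exploiting the centrality of the scalar complex structure $i$ in $\HC$ (which is indeed the structure against which $\mathcal{J}$ is defined), and you supply closedness separately either from Codazzi together with $[d^\nabla\eta]=d\eta'$ in the parallel frame, or from the Maurer--Cartan identity combined with the vanishing of the $(2,0)$-part on a Riemann surface. Both arguments are rigorous. The paper's route has the advantage of producing the geometric interpretation of $g$ as a horizontal holomorphic lift of the Gauss map (an interpretation the paper continues to use), whereas your route is shorter, purely computational, and bypasses the fibration structure entirely; a small remark on your Maurer--Cartan variant: comparing $d\eta'=d\theta_1 J+d\theta_2 K$ with $\eta'\wedge\eta'=2(\theta_1\wedge\theta_2)I$ in the $\C$-basis $\{\textit{1},I,J,K\}$ already forces $d\theta_1=d\theta_2=0$ (and $\theta_1\wedge\theta_2=0$) without invoking the $(1,0)$-type, so that step is even a little more immediate than you present it.
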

\begin{proof}
Let $(e_0,e_1)$ and $(u_1,u_2)$ be the orthonormal moving frames of $E$ and $TM$ such that $\pi(\tilde{s})=((e_0,e_1),(u_1,u_2)),$ where $\pi=\tilde{Q}\rightarrow Q_1\times_M Q_2$ is the natural projection onto the bundle $Q_1\times_M Q_2$ of the orthonormal frames of $E\oplus TM.$ Since $\tilde{s}$ is a parallel frame, so are $(e_0,e_1)$ and $(u_1,u_2).$ By Lemma \ref{lem1}, $\varphi$ satisfies
$$\nabla_X\varphi=\eta(X)\cdot\varphi\hspace{.5cm}\mbox{with}\hspace{.5cm}\eta(X)=-\frac{1}{2}\sum_{j=1,2}u_j\cdot B(u_j,X)$$
for all $X\in\Gamma(TM),$ where $B$ is given by (\ref{B function phi}). Thus $[\nabla_X\varphi]=[\eta(X)][\varphi]$ in $\tilde{s},$ which reads $dg(X)=[\eta(X)]g.$ Thus $\eta'=dg\ g^{-1}=[\eta],$ that is $\eta'$ represents $\eta$ in $\tilde{s}$ and 
$$\eta'(X)=-\frac{1}{2}\sum_{j=1,2}[u_j] \tch{[B(u_j,X)]}.$$ 
Since $[u_1]=J,$ $[u_2]=K$ and $[B(u_j,X)]$ is of the form $a(X)i\textit{1}+b(X)I,$ $a,b\in\Gamma(T^*M)$ (the vector $B(u_j,X)$ is normal to the surface), $\eta'$ is of the form (\ref{etap theta}) where $\theta_1$ and $\theta_2$ are two complex-valued 1-forms. In order to prove that $\theta_1$ and $\theta_2$ are holomorphic 1-forms we will need the following two lemmas:
\begin{lem}\label{etap2}The Gauss map of the immersion defined by $\varphi$ is given by 
\begin{equation}\label{G function g}
G=g^{-1}Ig,
\end{equation}
and the form $\tilde{\eta}=\langle\langle\eta\cdot\varphi,\varphi\rangle\rangle\in\Omega^1(M,\mathcal{G})$ by
\begin{equation}\label{expr eta g}
\tilde{\eta}=\frac{1}{2}G^{-1}dG=g^{-1}dg.
\end{equation}
\end{lem}
\noindent\textit{Proof of Lemma \ref{etap2}:} First, by Lemma \ref{gauss map function phi},
\begin{equation*}
G=\langle\langle u_1\cdot u_2\cdot\varphi,\varphi\rangle\rangle=\overline{g}Ig
\end{equation*}
since $[\varphi]=g$ and $[u_1\cdot u_2]=I$ in $\tilde{s};$ this gives (\ref{G function g}) since $\overline{g}=g^{-1}$ ($g\in Spin(1,3)$). We prove (\ref{expr eta g}): the first identity is (\ref{expr eta G}) in Lemma \ref{lemma expr eta G}. The last identity is straightforward using 
\begin{equation}\label{etatilde etap}
\tilde{\eta}=\langle\langle\eta\cdot\varphi,\varphi\rangle\rangle=g^{-1}[\eta]g=g^{-1}\eta' g
\end{equation}
with $\eta'=dg\ g^{-1}.$
$\finpreuve$
Formula (\ref{G function g}) in Lemma \ref{etap2} together with the special form of (\ref{etap theta}) may be rewritten as follows:
\begin{lem}\label{rmk lift}
Consider the projection
\begin{eqnarray*}
p:Spin(1,3)&\rightarrow &\mathcal{Q}\\
g&\mapsto& g^{-1}Ig
\end{eqnarray*}
as a $S^1_{\C}$ principal bundle, where the action of $S^1_{\C}$ on $Spin(1,3)$ is given by the multiplication on the left. This fibration is formally analogous to the classical Hopf fibration $S^3\subset\HH\rightarrow S^2\subset \Im m\ \HH,\ g\mapsto g^{-1}Ig$. It is equipped with the horizontal distribution given at every $g\in Spin(1,3)$ by
$$\mathcal{H}_g:={d(R_{g^-1})_g}^{-1}\left(\C J\oplus\C K\right)\hspace{1cm}\subset\hspace{.5cm}T_gSpin(1,3)$$
where $R_{g^-1}$ stands for the right-multiplication by $g^{-1}$ on $Spin(1,3).$ The distribution $(\mathcal{H}_g)_{g\in Spin(1,3)}$ is $H$-orthogonal to the fibers of $p,$ and, for all $g\in Spin(1,3),$ $dp_g:\mathcal{H}_g\rightarrow T_{p(g)}Q$ is an isomorphism which preserves $i$ and such that 
$$H(dp_g(u),dp_g(u))=4H(u,u)$$
for all $u\in\mathcal{H}_g.$ With these notations, we have
\begin{equation}\label{relation G g}
G=p\circ g,
\end{equation}
and the map $g:M\rightarrow Spin(1,3)$ appears to be a horizontal lift to $Spin(1,3)$ of the Gauss map $G:M\rightarrow\mathcal{Q}$ (formulas (\ref{G function g}) and (\ref{etap theta})).
\end{lem}
We achieve the proof of the theorem, showing that $g$ is a holomorphic map. From (\ref{relation G g}), we get
$$dG=dp\circ dg.$$
Since $dp$ and $dG$ commute to the complex structures $i$ defined on $Spin(1,3),$ $\mathcal{Q}$ and $M$ , so does $dg,$ and the result follows.
\end{proof}
\subsubsection{The immersion depends on two holomorphic functions and on two smooth functions}
Since $\theta_1$ and $\theta_2$ are holomorphic forms, they read
\begin{equation}\label{def f1 f2}
\theta_1=f_1dz,\hspace{1cm} \theta_2=f_2dz
\end{equation}
where $f_1$ and $f_2$ are two holomorphic functions; moreover, $f_1$ and $f_2$ do not vanish simultaneously since $dG$ is assumed to be injective at every point. Here and below $z$ is a conformal parameter of $(M,{\mathcal J}),$ as described at the end of Section \ref{ss gauss map}. 
\\

The aim now is to show that the immersion $F:M\stackrel{\varphi}{\hookrightarrow}\R^{1,3}$ is determined by the holomorphic functions $f_1$ and $f_2$, and by the two real functions $h_0,h_1:M\rightarrow\R$ such that $\vec{H}=h_0e_0+h_1e_1,$ the components of the mean curvature vector $\vec{H}$ in the parallel frame $(e_0,e_1).$ We first   observe that the immersion is determined by $g$ and by the orthonormal and parallel frame $(u_1,u_2)$ of $TM:$
\begin{prop}
The immersion $F:M\stackrel{\varphi}{\hookrightarrow}\R^{1,3}$ is such that
\begin{eqnarray*}
dF(X)&=&{g}^{-1}\left(w_1(X)J+w_2(X)K\right)\tch{g}
\end{eqnarray*}
for all $X\in TM,$ where $w_1,w_2:TM\rightarrow\R$ are the dual forms of $u_1,u_2.$
\end{prop}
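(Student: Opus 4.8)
The plan is to read off $dF$ directly from the Weierstrass representation of Section~\ref{sec representation} and to rewrite it in the global parallel frame $\tilde{s}$ used throughout Section~\ref{spinor description}. Recall that, up to translation, $F$ is characterized by $dF=\xi$ with $\xi(X)=\langle\langle X\cdot\varphi,\varphi\rangle\rangle$; see Proposition~\ref{prop fundamental xi} and Theorem~\ref{theorem second integration}. Hence everything reduces to computing the $\HC$-valued pairing $\langle\langle X\cdot\varphi,\varphi\rangle\rangle$ in the frame $\tilde{s}$.

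First I would express the pairing in coordinates. In $\tilde{s}$ the spinor $\varphi$ is represented by $g=[\varphi]\in\HC$, and for $Y\in Cl_1(E\oplus TM)$ one has $[Y\cdot\varphi]=[Y]\tch{[\varphi]}$ (Section~\ref{notation generale}). Combining this with the definition~(\ref{def scalar HC}) of $\langle\langle.,.\rangle\rangle$ gives
\[
dF(X)=\xi(X)=\langle\langle X\cdot\varphi,\varphi\rangle\rangle=\overline{[\varphi]}\,[X\cdot\varphi]=\overline{g}\,[X]\,\tch{g}.
\]
Since $H(\varphi,\varphi)=1$ we have $\overline{g}g=1$, i.e. $g\in Spin(1,3)$ and $\overline{g}=g^{-1}$, so that $dF(X)=g^{-1}\,[X]\,\tch{g}$.

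It then remains only to identify $[X]\in\HC$. By construction $\tilde{s}$ projects onto the orthonormal frame $((e_0,e_1),(u_1,u_2))$ of $E\oplus TM$, hence is adapted to $(e_0,e_1,u_1,u_2)$; by the convention of Section~\ref{notation generale}, the tangent vectors $u_1$ and $u_2$ are then represented in $\tilde{s}$ by $J$ and $K$ respectively. Writing $X=w_1(X)u_1+w_2(X)u_2$, where $w_1,w_2:TM\to\R$ are the dual $1$-forms of $u_1,u_2$, linearity of the bracket gives $[X]=w_1(X)J+w_2(X)K$. Substituting this into the formula above yields
\[
dF(X)=g^{-1}\big(w_1(X)J+w_2(X)K\big)\tch{g},
\]
which is the claim.

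There is no genuine obstacle here: once the Weierstrass formula $dF=\xi$ and the bracket conventions of Section~\ref{notation generale} are available, the statement is essentially a one-line computation. The only point that requires care is the bookkeeping with the two conjugations on $\HC$ entering $\langle\langle.,.\rangle\rangle$ and the rule $[Y\cdot\varphi]=[Y]\tch{[\varphi]}$, together with the identification of which frame vectors correspond to $J$ and to $K$; these are exactly what make the conjugation pattern $g^{-1}(\cdots)\tch{g}$ — rather than some other arrangement — come out correctly.
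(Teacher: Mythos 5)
Your proof is correct and follows essentially the same route as the paper: express $dF(X)=\xi(X)=\langle\langle X\cdot\varphi,\varphi\rangle\rangle$ in the parallel frame $\tilde s$ using the bracket conventions, obtain $\overline{g}\,[X]\,\tch{g}=g^{-1}[X]\tch{g}$, and read off $[X]=w_1(X)J+w_2(X)K$ from $[u_1]=J$, $[u_2]=K$. The only difference is that you explicitly record the step $\overline{g}=g^{-1}$ coming from $H(\varphi,\varphi)=1$, which the paper takes for granted.
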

\begin{proof}
We  have 
$$dF(X)=\langle\langle X\cdot\varphi,\varphi\rangle\rangle={g}^{-1}\ [X]\ \tch{g},$$ 
where $[X]\in\HC$ stands for the coordinates of $X\in Cl(E\oplus TM)$ in $\tilde{s}.$ Recalling that $[u_1]=J$ and $[u_2]=K$ in $\tilde{s},$ we have $[X]=X_1J+X_2K,$ where $(X_1,X_2)$ are the coordinates of $X\in TM$ in $(u_1,u_2);$ the result follows. 
\end{proof}
We finally precise how to recover $g,$ $u_1$ and $u_2$ from $f_1,$ $f_2,$ $h_0$ and $h_1:$
\begin{prop}\label{prop recov g u}
1- $g$ is determined by $f_1$ and $f_2,$ up to the multiplication on the right by a constant belonging to $Spin(1,3).$
\\
\\2- Define $\alpha_1,\alpha_2:M\lgra \C$ such that 
$$u_1=\alpha_1\hspace{1cm}\mbox{and}\hspace{1cm}u_2=\alpha_2$$
in the coordinates $z=x+iy.$ The functions $\alpha_1,\alpha_2,f_1,f_2,h_0$ and $h_1$ are linked by
\begin{equation}\label{relation functions}
(\alpha_1J+\alpha_2K)(f_1J+f_2K)=-ih_0\1+h_1I.
\end{equation}
In particular, if $f_1^2+f_2^2\neq 0,$ we get
\begin{equation}\label{alpha case 1}
\alpha_1J+\alpha_2K=-(-ih_0\1+h_1I)\frac{f_1J+f_2K}{{f_1}^2+{f_2}^2},
\end{equation}
that is, the orthonormal frame $(u_1,u_2)$ in the coordinates $z$ is determined by $f_1,$ $f_2,$ $h_0$ and $h_1.$ 
\end{prop}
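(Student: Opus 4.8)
The plan is to prove the two assertions in turn, working throughout in the global parallel frame $\tilde s$ of $\tilde Q$ fixed in Section~\ref{spinor description}, in which $\varphi$ is represented by $g\in Spin(1,3)\subset\HC$ and the vectors $e_0,e_1,u_1,u_2$ of the underlying parallel orthonormal frame are represented, as in Section~\ref{notation generale}, by $i\1,\,I,\,J,\,K$ respectively. For part~1, I would note that by Theorem~\ref{prop etap} the map $g$ solves $dg\,g^{-1}=\theta_1J+\theta_2K$, i.e. $dg=\eta'g$ with $\eta':=\theta_1J+\theta_2K$, which depends on $f_1,f_2$ only through $\theta_k=f_k\,dz$. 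If $g$ and $\tilde g$ are two solutions, then $h:=g^{-1}\tilde g$ satisfies $dh=-g^{-1}(\eta'g)g^{-1}\tilde g+g^{-1}\eta'\tilde g=0$, so $h$ is constant, and $h\in Spin(1,3)$ since $g$ and $\tilde g$ are; hence $g$ is determined up to right multiplication by a constant of $Spin(1,3)$. (If only $f_1,f_2$ are prescribed, existence of such a $g$ on the simply connected domain is equally clear: the integrability condition $d\eta'=\eta'\wedge\eta'$ holds with both sides zero, since the $\theta_k$ are holomorphic, hence closed, and every product $\theta_k\wedge\theta_l$ is a multiple of $dz\wedge dz=0$; and $H(g,g)$ is constant because $\eta'$ has values in the Lie algebra $\Im m\,\HC$ of $Spin(1,3)$, so $g$ may be normalised to lie in $Spin(1,3)$.)

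For part~2, the crucial identity is~\eqref{relation functions}, which I would read off from the Dirac equation in the frame $\tilde s$. Using $D\varphi=u_1\cdot\nabla_{u_1}\varphi+u_2\cdot\nabla_{u_2}\varphi$, the Killing-type equation $\nabla_{u_j}\varphi=\eta(u_j)\cdot\varphi$ of Lemma~\ref{lem1}, the identity $[\nabla_{u_j}\varphi]=dg(u_j)=\eta'(u_j)g$ from the proof of Theorem~\ref{prop etap}, the rule $[Y\cdot\psi]=[Y]\widehat{[\psi]}$ for $Y\in Cl_1(E\oplus TM)$, and the fact that $q\mapsto\widehat q$ is an algebra automorphism of $\HC$ fixing $\1,I,J,K$, the Dirac equation $D\varphi=\vec H\cdot\varphi$ becomes, after computing $[D\varphi]$ and cancelling the invertible factor $\widehat g$ on the right,
$$[u_1]\,\widehat{\eta'(u_1)}+[u_2]\,\widehat{\eta'(u_2)}=[\vec H].$$
Here $[u_1]=J$, $[u_2]=K$, and $[\vec H]=ih_0\1+h_1I$ since $\vec H=h_0e_0+h_1e_1$; and by Theorem~\ref{prop etap} together with $\theta_k=f_k\,dz$ one has $\eta'(u_j)=\theta_1(u_j)J+\theta_2(u_j)K=\alpha_j(f_1J+f_2K)$, because $dz(u_j)=\alpha_j$ under the identification of the vector field $u_j$ with the complex-valued function $\alpha_j$ in the coordinate $z$. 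Substituting, the displayed equation reads $(\overline{\alpha_1}J+\overline{\alpha_2}K)(\overline{f_1}J+\overline{f_2}K)=ih_0\1+h_1I$, and applying the conjugation $q\mapsto\widehat q$ to both sides gives $(\alpha_1J+\alpha_2K)(f_1J+f_2K)=-ih_0\1+h_1I$, which is~\eqref{relation functions}.

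Finally, when $f_1^2+f_2^2\neq0$, the computation $(f_1J+f_2K)^2=-(f_1^2+f_2^2)\1$ shows that $f_1J+f_2K$ is invertible with inverse $-(f_1J+f_2K)/(f_1^2+f_2^2)$, so multiplying~\eqref{relation functions} on the right by this inverse produces~\eqref{alpha case 1}; hence $\alpha_1,\alpha_2$ — equivalently the parallel frame $(u_1,u_2)$ read in the coordinate $z$ — are determined by $f_1,f_2,h_0,h_1$. I expect the only genuine difficulty to be the bookkeeping of the conjugation $q\mapsto\widehat q$, which enters both through the action of odd Clifford elements and through pulling it across products, together with the correct reading of $\eta'(u_j)$ via $dz(u_j)=\alpha_j$; once these are handled the computation is short.
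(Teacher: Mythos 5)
Your proof is correct and follows the same route as the paper's: part 1 by the uniqueness of solutions of $dg\,g^{-1}=\eta'$, and part 2 by writing out $D\varphi=\vec H\cdot\varphi$ in the parallel frame $\tilde s$, cancelling $\hat g$, substituting $\eta'(u_j)=\alpha_j(f_1J+f_2K)$, and applying $q\mapsto\hat q$; you have simply made explicit the bookkeeping with $\hat{\,\cdot\,}$ and the inversion of $f_1J+f_2K$ that the paper leaves implicit.
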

\begin{proof}
1- The solution $g$ of the equation $dg\ g^{-1}=\eta'$ is unique, up to multiplication on the right by a constant belonging to $Spin(1,3).$
\\2- Note that, in $\tilde{s},$
\begin{eqnarray*}
[D\varphi]&=&[u_1]\tch{[\nabla_{u_1}\varphi]}+[u_2]\tch{[\nabla_{u_2}\varphi]}\\
&=&\left([u_1]\tch{\eta'}(u_1)+[u_2]\tch{\eta'}(u_2)\right)\tch{g}.
\end{eqnarray*}
since $[\nabla\varphi]=\eta'[\varphi].$ Moreover,
$$[\vec{H}\cdot\varphi]=[\vec H]\tch{g}=(h_0[e_0]+h_1[e_1])\tch{g}.$$
In $\tilde{s},$ the vectors $e_0,$ $e_1,$ $u_1$ and $u_2$ are represented by $i\1,$ $I,$ $J$ and $K$ respectively; the equation $D\varphi=\vec{H}\cdot\varphi$ thus yields
\begin{equation}\label{eqn eta'}
J\tch{\eta'}(u_1)+K\tch{\eta'}(u_2)=ih_0\1+h_1I.
\end{equation}
By (\ref{etap theta}), (\ref{def f1 f2}) and the definition of $\alpha_1$ and $\alpha_2,$ we have
$$\eta'(u_1)=\alpha_1(f_1J+f_2K)\hspace{1cm}\mbox{and}\hspace{1cm}\eta'(u_2)=\alpha_2(f_1J+f_2K),$$
and (\ref{eqn eta'}) implies (\ref{relation functions}). Equation (\ref{alpha case 1}) is a consequence of (\ref{relation functions}), together with the following observation: an element $\xi\in\HC$ is invertible if and only if $H(\xi,\xi)=\overline{\xi}\xi\neq 0;$ its inverse is then $\displaystyle{\xi^{-1}=\frac{\overline{\xi}}{H(\xi,\xi)}.}$
\end{proof}
\begin{rem} The complex numbers $\alpha_1$ and $\alpha_2,$ considered as real vector fields on $M,$  are independent and satisfy $\left[\alpha_1,\alpha_2\right]=0.$ Indeed,  since the metric on $M$ is flat, there exists a local diffeomorphism $\psi:\R^2\rightarrow M$ such that $u_1=\frac{\partial\psi}{\partial x}, u_2=\frac{\partial\psi}{\partial y}.$
\end{rem}
\subsubsection{Interpretation of the condition $f_1^2+f_2^2\neq 0$}
To analyze the meaning of the condition $f_1^2+f_2^2\neq 0$ in the proposition above, we first give the following geometric interpretation of $f_1^2+f_2^2:$
\begin{prop}
Let us define 
$$G^*H:=H(dG,dG),$$ 
where $G:M\rightarrow\mathcal{Q}$ is the Gauss map of $M\subset\R^{1,3}$ with values in 
$$\displaystyle{\mathcal{Q}=\{Z\in\Im m\ \HC:\ H(Z,Z)=1\}.}$$
The complex quadratic form $G^*H$ is the analog of \emph{the third fundamental form} in the classical theory of surfaces in $\R^3.$ We have the formula
\begin{equation}\label{G star H}
G^*H=4(f_1^2+f_2^2)dz^2.
\end{equation}
\end{prop}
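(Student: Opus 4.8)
The plan is to push the computation down to the global parallel frame $\tilde s$, where both the Gauss map and the form $\eta$ become explicit in terms of the map $g\colon M\to Spin(1,3)$ and the holomorphic forms $\theta_i=f_i\,dz$. First I would assemble the ingredients already established: $dG=2G\tilde\eta$ (which is exactly $\tilde\eta=\frac12 G^{-1}dG$, Lemma~\ref{lemma expr eta G}), $\tilde\eta=g^{-1}\eta' g$ with $\eta'=dg\,g^{-1}$ (equation~\eqref{etatilde etap} in Lemma~\ref{etap2}), and $\eta'=\theta_1 J+\theta_2 K$ with $\theta_1=f_1\,dz$, $\theta_2=f_2\,dz$ (Theorem~\ref{prop etap} together with~\eqref{def f1 f2}). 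Substituting, $dG=2G\,g^{-1}\eta' g$, so that, viewed as a complex-valued quadratic form on $TM$,
\[
G^*H=H(dG,dG)=4\,H\!\left(G\,g^{-1}\eta' g,\ G\,g^{-1}\eta' g\right).
\]

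The heart of the argument is the invariance of the $\C$-bilinear form $H$ on $\HC$ under multiplication (on either side) and conjugation by elements of $Spin(1,3)$. This rests on the elementary identity $\overline\zeta\,\zeta=\zeta\,\overline\zeta=H(\zeta,\zeta)\1$ for every $\zeta\in\HC$ (a one-line check from $I^2=J^2=K^2=-\1$, $IJ=-JI$, and~\eqref{H HC}), combined with $\overline{ab}=\overline b\,\overline a$ and the fact that $\overline h\,h=h\,\overline h=\1$ whenever $h\in Spin(1,3)$ (since $H(h,h)=1$). Indeed $\overline{h\zeta}\,(h\zeta)=\overline\zeta\,\overline h\,h\,\zeta=\overline\zeta\zeta$ and $\overline{h^{-1}\zeta h}\,(h^{-1}\zeta h)=h^{-1}\overline\zeta\zeta h=h^{-1}H(\zeta,\zeta)\1\,h=H(\zeta,\zeta)\1$, so left multiplication and conjugation by $h$ both preserve the quadratic form $\zeta\mapsto H(\zeta,\zeta)$, hence (by polarization) $H$ itself. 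Since $G(x)\in\mathcal Q\subset Spin(1,3)$ and $g(x)\in Spin(1,3)$, applying this first with $h=G$ and then with $h=g$ removes both $G$ and the conjugating $g$ from the displayed expression, leaving $G^*H=4\,H(\eta',\eta')$.

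It then remains to read off $H(\eta',\eta')$. Writing $\eta'=\theta_1 J+\theta_2 K$ and using $H(J,J)=H(K,K)=1$, $H(J,K)=0$ (these are the relations $H(E_i,E_j)=\delta_{ij}$ with $E_2=J$, $E_3=K$), one gets $H(\eta',\eta')=\theta_1^2+\theta_2^2$, whence $G^*H=4(\theta_1^2+\theta_2^2)=4(f_1^2+f_2^2)\,dz^2$, which is the asserted formula. I do not anticipate a genuine obstacle: the only thing demanding care is the bookkeeping of left versus right quaternionic multiplication and the repeated use of $\overline h=h^{-1}$ on $Spin(1,3)$; once the $Spin(1,3)$-invariance of $H$ is isolated as a separate observation, the statement follows from a single substitution.
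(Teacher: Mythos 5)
Your argument is correct, and all the pieces it draws on are indeed available at this point in the paper (Lemma \ref{lemma expr eta G}, equation \eqref{etatilde etap}, Theorem \ref{prop etap}, and the quaternionic identity $\overline{\zeta}\zeta=H(\zeta,\zeta)\,\1$). The route you take is mildly but genuinely different from the paper's: you compute $G^*H=H(dG,dG)$ directly, after isolating as a lemma that left multiplication and conjugation by elements of $Spin(1,3)$ preserve the $\C$-bilinear form $H$ (which, by polarization, follows from $\overline{h}h=\1$ and $\overline{h\zeta}(h\zeta)=\overline{\zeta}\zeta$). The paper instead computes the quaternion-valued quadratic form $\tilde{\eta}^{\,2}$ in two ways: once via $\tilde\eta=-\tfrac12 GdG$, using $G^2=-\1$ to get $GdG=-dG\,G$ and hence $\tilde{\eta}^{\,2}=\tfrac14 dG^2=-\tfrac14\overline{dG}\,dG=-\tfrac14 (G^*H)\1$, and once via $\tilde{\eta}^{\,2}=g^{-1}\eta'^{\,2}g=-(\theta_1^2+\theta_2^2)\1$. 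The underlying quaternionic facts are the same ($G^{-1}=\overline{G}=-G$, centrality of $\1$, $J^2=K^2=-\1$, $JK=-KJ$), but your argument makes the $Spin(1,3)$-invariance of $H$ the organizing principle and thereby avoids the anticommutation identity $GdG=-dG\,G$ and the explicit passage through $\tilde\eta^{\,2}$; the paper's version instead makes the algebra self-contained in a single chain of equalities. Both are correct and of comparable length.
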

\begin{proof}
By Lemma  \ref{lemma expr eta G} and since $G^{-1}=\overline{G}=-G,$ we have $\tilde{\eta}=\frac{1}{2}G^{-1}dG=-\frac{1}{2}GdG$. Thus
$$\tilde{\eta}^2=\frac{1}{4}\ GdG\ GdG.$$
Since $G^2=-1,$ we moreover have $GdG=-dGG.$ Thus
$$\tilde{\eta}^2=\frac{1}{4}\ (-dGG)\ GdG=\frac{1}{4}dG^2=-\frac{1}{4}\overline{dG}dG,$$
that is 
\begin{equation}\label{eta2 1}
\tilde{\eta}^2=-\frac{1}{4}G^*H.
\end{equation}
We also have $\tilde{\eta}^2=g^{-1}\eta'^2g$ (see (\ref{etatilde etap})), with $\eta'^2=(\theta_1J+\theta_2K)^2=-(\theta_1^2+\theta_2^2)\1.$ Thus 
\begin{equation}\label{eta2 2}
\tilde{\eta}^2=-g^{-1}(\theta_1^2+\theta_2^2)\1 g= -(\theta_1^2+\theta_2^2)\1.
\end{equation}
Equations (\ref{eta2 1}) and (\ref{eta2 2}), and the very definition (\ref{def f1 f2}) of $f_1$ and $f_2,$ give the result.
\end{proof}
By (\ref{G star H}), if $f_1^2+f_2^2=0$ at $x_0\in M,$ $dG_{x_0}(T_{x_0}M)$ belongs to
\begin{equation}\label{null lines}
G(x_0)+\{\xi\in\Im m\ \HC:\ H(G(x_0),\xi)=H(\xi,\xi)=0\}.
\end{equation}
This set is the union of two complex lines through $G(x_0)$ in the Grassmannian ${\mathcal Q}$ of the oriented and spacelike planes; these lines have the following geometric meaning: if $N_1,N_2$ are two vectors in $N_{x_0}M$ such that 
$$\langle N_1,N_1\rangle=\langle N_2,N_2\rangle=0\hspace{1cm}\mbox{and}\hspace{1cm}\langle N_1,N_2\rangle\neq 0,$$ 
these lines correspond to the set of spacelike planes belonging to the degenerate hyperplanes $T_{x_0}M\oplus\R N_1$ and $T_{x_0}M\oplus\R N_2$ respectively. More precisely, the following characterization holds: $f_1^2+f_2^2=0$ at $x_0\in M$ if and only if the osculating paraboloid of the surface at $x_0$ belongs to one of the degenerate hyperplanes $T_{x_0}M\oplus\R N_i,$ $i=1,2,$ i.e. is a graph of a quadratic map of the form  
\begin{equation}\label{type q}
(x_1,x_2)\in\R^2\simeq T_{x_0}M\mapsto \left((\zeta-1)x_1^2+(\zeta+1)x_2^2\right)N\ \in\ \R N
\end{equation}
where $N$ is a null vector collinear to $N_1$ or $N_2,$ and where $\zeta$ belongs to $\R;$ in other words, this occurs if and only if the osculating space of the surface at $x_0$ is a degenerate hyperplane. See \cite{BS} for more details.

Since $f_1$ and $f_2$ are holomorphic functions, there is the alternative: 
\begin{center}
$f_1^2+f_2^2$ does not vanish on the complementary set of isolated points of $M,$ 
\end{center}
or
\begin{center}
$f_1^2+f_2^2$ vanishes identically on $M.$ 
\end{center}

In the first  case, (\ref{alpha case 1}) holds, except at some isolated points, and $\alpha_1$ and $\alpha_2$ are thus determined by $f_1,$ $f_2,$ $h_0$ and $h_1$ at every point. It is not difficult to see that the second case is equivalent to the surface being in some degenerate hyperplane $x_0\ +\ T_{x_0}M\oplus\R N$ of $\R^{1,3};$ in the attempt of describing the flat surfaces with flat normal bundle, this case is trivial, since, conversely, every surface belonging to some degenerate hyperplane verifies $K=K_N=0$ (indeed, the values of its Gauss map belong to a complex line in the Grassmannian, precisely, a line of the form (\ref{null lines})).

\subsection{Recovering of a flat immersion with flat normal bundle by Weierstrass data}\label{ss recovering}
We gather the results of the previous section to construct flat immersions with flat normal bundle from initial data, proving Corollary \ref{corollary2 theorem}: we suppose that the hypotheses of Corollary \ref{corollary2 theorem} hold, we consider $E=\R^{1,1}\times U$ the trivial bundle on $U\subset\C,$ where $\R^{1,1}$ is $\R^2$ equipped with the metric $-dx_0^2+dx_1^2,$ and we denote by $(e_0,e_1)$ the canonical basis of $\R^{1,1}.$ Let us define $s=(e_0,e_1,u_1,u_2)$ where $u_1=\alpha_1$ and $u_2=\alpha_2$ in $\R^2\simeq\C$ (the complex numbers $\alpha_1$ and $\alpha_2$ are defined by formulas (\ref{alpha function f h cor}) in Corollary \ref{corollary2 theorem}), and let us consider the metric on $U$ such that $(u_1,u_2)$ is an orthonormal frame; this metric is flat and the frame $(u_1,u_2)$ is parallel since $\left[u_1,u_2\right]=0$ by hypothesis. Let $\tilde{s}$ be a section of the trivial bundle $\tilde{Q}\rightarrow U$ such that $\pi(\tilde{s})=s,$ where $\pi:\tilde{Q}=S^1_{\C}\times U\stackrel{2:1}{\longrightarrow} Q=(SO(1,1)\times SO(2))\times U$ is the natural projection. Equation (\ref{eqn g1}) is solvable since $\eta':=\theta_1J+\theta_2K$ satisfies the structure equation $d\eta'-[\eta',\eta']=0.$ Moreover the solution is unique up to the natural right action of $Spin(1,3)$ (and the solution is unique if some initial value $g(x_0)\in Spin(1,3)$ is given). The definition (\ref{alpha function f h cor}) of $\alpha_1$ and $\alpha_2$ is equivalent to (\ref{alpha case 1}), which traduces that $\varphi:=[\tilde{s},g]\in\Sigma=\tilde{Q}\times\HC/\rho$ is a solution of $D\varphi=\vec{H}\cdot\varphi,$ where $\vec{H}=h_0e_0+h_1e_1$ (see the proof of Proposition \ref{prop recov g u}). Moreover the form $\xi$ defined by (\ref{th def xi}) is such that $\xi(X)=\langle\langle X\cdot\varphi,\varphi\rangle\rangle;$ thus $\xi$ is closed (Proposition \ref{prop fundamental xi}), and a primitive of $\xi$ defines an isometric immersion in $\R^{1,3}\subset\HC$ (Theorem \ref{theorem second integration}). Since the Gauss map of the immersion is $G={g}^{-1}Ig$ (Lemma \ref{etap2}), and since  $g$ is a holomorphic map  (formula (\ref{eqn g1})), we get that $G$ is a holomorphic map too, and thus that $K=K_N=0$ (Proposition \ref{prop pull back}).
\begin{rem}\label{rmq psi}
According to Corollary \ref{corollary2 theorem}, a flat immersion with flat normal bundle and regular Gauss map, and whose osculating spaces are everywhere not degenerate (i.e. such that $G^*H\neq 0$ at every point, see the end of the previous section), is determined by two holomorphic functions $f_1,f_2:U\rightarrow\C$ such that $f_1^2+f_2^2\neq 0$ on $U$ and by two smooth functions $h_0,h_1:U\rightarrow\R$ such that the two complex numbers $\alpha_1$ and $\alpha_2$ defined by (\ref{alpha function f h cor}), considered as real vector fields, are independent at every point and such that $\left[\alpha_1,\alpha_2\right]=0$ on $U.$ Considering further a holomorphic function $h:U\rightarrow\C$ such that $h^2=f_1^2+f_2^2,$ and setting $z'$ for the parameter such that $dz'=h(z)dz,$ we have
$$g^*H=H(dg,dg)=H(dg g^{-1},dgg^{-1})=\left(f_1^2+f_2^2\right)dz^2=d{z'}^2,$$
and thus, in $z',$
\begin{equation}\label{def psi}
g'g^{-1}=\cos\psi\ J+\sin\psi\ K
\end{equation}
for some holomorphic function $\psi:U'\rightarrow\C.$ The parameter $z'$ may be interpreted as the \emph{complex arc length} of the holomorphic curve $g:U\rightarrow Spin(1,3),$ and  the holomorphic function $\psi$ as the \emph{complex angle} of $g'$ in the trivialization $TSpin(1,3)=Spin(1,3)\times\mathcal{G}.$ Observe that, from the definition (\ref{def psi}) of $\psi,$ the derivative $\psi'$ may be interpreted as the \emph{complex geodesic curvature} of the holomorphic curve $g:U\rightarrow Spin(1,3)$. The immersion thus only depends on the single holomorphic function $\psi,$ instead of the two holomorphic functions $f_1,f_2.$ Moreover, the two relations in (\ref{alpha function f h cor}) then simplify to
\begin{equation}\label{alpha function psi h}
\alpha_1=ih_0\cos\psi+h_1\sin\psi\hspace{.5cm}\mbox{and}\hspace{.5cm}\alpha_2=ih_0\sin\psi-h_1\cos\psi.
\end{equation}
Note that the new parameter $z'$ may be only locally defined, since the map $z\rightarrow z'$ may be not one-to-one in general.
\begin{cor}\label{corollary psi}
Let $U\subset\C$ be a simply connected domain, and let $\psi:U\rightarrow\C$ be a holomorphic function. Suppose that $h_0,h_1:U\rightarrow\R$ are smooth functions such that $\alpha_1$ and $\alpha_2,$ real vector fields defined by (\ref{alpha function psi h}), are independent at every point and satisfy $\left[\alpha_1,\alpha_2\right]=0$ on $U.$ Then, if $g:U\rightarrow Spin(1,3)\subset\HC$ is a holomorphic map solving 
$$g'g^{-1}=\cos\psi J+\sin\psi K,$$ 
and if we set
$$\xi:=g^{-1}(\omega_1J+\omega_2K)\hat{g}$$ 
where $\omega_1,\omega_2:TU\rightarrow\R$ are the dual forms of $\alpha_1,\alpha_2\in\Gamma(TU)$ and where $\hat{g}$ stands for the map $g$ composed by the complex conjugation in $\HC,$ the function $F=\int\xi$ defines an immersion $U\rightarrow\R^{1,3}$ with $K=K_N=0.$ Reciprocally, the immersions of $M$ such that $K=K_N=0,$ with regular Gauss map and whose osculating spaces are everywhere not degenerate, are locally of this form.
 \end{cor}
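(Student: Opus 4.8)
\emph{Proof plan.} The statement is Corollary~\ref{corollary2 theorem} rewritten in the \emph{complex arc-length} parameter $z'$, so the plan is to reduce to it in both directions. For the direct implication I would simply set $f_1:=\cos\psi$ and $f_2:=\sin\psi$. These are holomorphic on $U$ and satisfy $f_1^2+f_2^2\equiv 1\neq 0$, so formulas (\ref{alpha function f h cor}) collapse to (\ref{alpha function psi h}), and the hypotheses on $h_0,h_1$ are exactly those of Corollary~\ref{corollary2 theorem}. Writing $\theta_k:=f_k\,dz$, equation (\ref{eqn g1}) reads, after dividing by $dz$, precisely $g'g^{-1}=\cos\psi\,J+\sin\psi\,K$ (its solvability being guaranteed by the structure equation, as in Section~\ref{ss recovering}), and the one-form $\xi=g^{-1}(\omega_1J+\omega_2K)\hat{g}$ is literally the one-form of Corollary~\ref{corollary2 theorem} since $\omega_1,\omega_2$ are the duals of the geometric frame $u_1,u_2$. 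Hence Corollary~\ref{corollary2 theorem} gives directly that $F=\int\xi$ is an immersion with $K=K_N=0$.

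For the converse I would start from the reciprocal part of Corollary~\ref{corollary2 theorem}: a flat immersion with flat normal bundle, regular Gauss map and everywhere non-degenerate osculating spaces is, on a simply connected domain, of the form described there, with $f_1,f_2$ holomorphic; and since the osculating spaces are non-degenerate, $G^*H\neq0$, whence $f_1^2+f_2^2\neq0$ everywhere by (\ref{G star H}). On a small enough simply connected neighbourhood of a point, $f_1^2+f_2^2$ is therefore nowhere zero, so I may pick a holomorphic $h$ with $h^2=f_1^2+f_2^2$ and a primitive $z'$ of $h\,dz$; as $h\neq0$, $z\mapsto z'$ is a local biholomorphism, and after one more shrinking it is a chart. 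In this chart, as in Remark~\ref{rmq psi}, $dg\,g^{-1}=(f_1J+f_2K)\,dz=h^{-1}(f_1J+f_2K)\,dz'$ with $(f_1/h)^2+(f_2/h)^2=1$. The small elementary fact needed here is that a pair of holomorphic functions whose squares sum to $1$ is of the form $(\cos\psi,\sin\psi)$ with $\psi$ holomorphic: indeed $(f_1/h+if_2/h)(f_1/h-if_2/h)=1$, so $f_1/h+if_2/h$ is nowhere zero and, the domain being simply connected, has a holomorphic logarithm $i\psi$; then $f_1/h=\cos\psi$ and $f_2/h=\sin\psi$, and $g'g^{-1}=\cos\psi\,J+\sin\psi\,K$ in the parameter $z'$.

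It then remains to verify that the data $\alpha_1,\alpha_2$ and the form $\xi$ transform correctly. By definition (Proposition~\ref{prop recov g u}) $\alpha_i$ is the complex coordinate of the fixed tangent vector $u_i$ in the current chart, so its value in the $z'$-chart is $h\,\alpha_i$; substituting this into (\ref{alpha function f h cor}) and using $h^2=f_1^2+f_2^2$ together with $f_k/h\in\{\cos\psi,\sin\psi\}$ turns (\ref{alpha function f h cor}) into exactly (\ref{alpha function psi h}), while the independence and $[\,\cdot\,,\cdot\,]=0$ conditions, being conditions on the unchanged vector fields $u_1,u_2$, persist. The form $\xi=g^{-1}(\omega_1J+\omega_2K)\hat{g}$ depends only on $g$ and on the duals of $u_1,u_2$, hence is intrinsic and is unaffected by the change of parameter, so $F=\int\xi$ still recovers (up to a translation) the original immersion, exhibiting it in the asserted form. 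I expect the only delicate points to be this bookkeeping of the change of variable — especially pinning down the rescaling $\alpha_i\mapsto h\alpha_i$ — and the slightly hidden elementary lemma on holomorphic solutions of $a^2+b^2=1$; everything else is a direct appeal to Corollary~\ref{corollary2 theorem} and Remark~\ref{rmq psi}.
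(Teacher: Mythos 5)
Your proof is correct and follows the same route as the paper's Remark~\ref{rmq psi}, reducing both directions to Corollary~\ref{corollary2 theorem} via the complex arc-length reparameterization $z'$. You simply make explicit two points the paper leaves implicit, namely the holomorphic-logarithm argument showing that $(f_1/h,f_2/h)$ can be written as $(\cos\psi,\sin\psi)$ on a simply connected domain, and the verification that $\alpha_i\mapsto h\alpha_i$ under $z\mapsto z'$, which is exactly what turns (\ref{alpha function f h cor}) into (\ref{alpha function psi h}).
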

\end{rem}

\subsection{Flat surfaces in three-dimensional hyperbolic space}
We obtain here a spinorial proof of the following result of J. G\'alvez, A. Mart\'{\i}nez and F. Mil\'an concerning the representation of flat surfaces in hyperbolic space $\HH^3:$
\begin{thm}\label{th GMM1}\cite{GMM1,GMM}
Let $B:M\rightarrow SL_2(\C)$ be a map such that 
$$B^{-1}dB=\left(\begin{array}{cc}0&\theta\\\omega&0\end{array}\right),$$
where $\theta$ and $\omega$ are holomorphic 1-forms on $M.$ Assume moreover that $|\theta|\neq|\omega|.$ Then $F=BB^*$ is a flat surface in $\mathbb{H}^3.$ Conversely, every simply-connected flat surface in $\mathbb{H}^3$ may be described in that way. 
\end{thm}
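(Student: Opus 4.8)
\medskip\noindent\textit{Proof strategy.} The plan is to realize $\mathbb{H}^3$ as the hyperboloid in $\R^{1,3}$ and to deduce Theorem \ref{th GMM1} from the Weierstrass representation of flat surfaces with flat normal bundle (Theorem \ref{prop etap} and Corollary \ref{corollary2 theorem}), using Proposition \ref{prop cond hypersurfaces}(3) to single out the immersions lying in $\mathbb{H}^3$. The first step is an algebraic dictionary between $\HC$ and $M_2(\C)$: one fixes an identification under which $Spin(1,3)=\{q:H(q,q)=1\}$ corresponds to $SL_2(\C)$ (recall $H$ is the determinant), the plane $\C J\oplus\C K\subset\HC$ corresponds to the off-diagonal matrices, and the action $\xi\mapsto q\xi\hat{q}^{-1}$ of $Spin(1,3)$ on $\R^{1,3}=\{\xi=-\hat{\overline{\xi}}\}$ corresponds to the action $\Xi\mapsto B\Xi B^{*}$ of $SL_2(\C)$ on the $i$-multiples of Hermitian matrices. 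Granting this, if $g\leftrightarrow B$ then a form of shape $\theta_1J+\theta_2K$ corresponds to an off-diagonal $B^{-1}dB$ whose entries $\theta,\omega$ are $\C$-linear combinations of $\theta_1,\theta_2$ (hence holomorphic exactly when $\theta_1,\theta_2$ are), and the value $\langle\langle e_0\cdot\varphi,\varphi\rangle\rangle=i\,g^{-1}\hat{g}$ appearing in Proposition \ref{prop cond hypersurfaces}(3) corresponds to a constant multiple of $BB^{*}$.

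For the converse statement, let $M$ be simply connected and flat in $\mathbb{H}^3$, and regard it as a spacelike surface in $\R^{1,3}$ with normal bundle $E$ framed by the position vector $e_0:=F$ and by the unit normal $e_1$ of $M$ in $\mathbb{H}^3$. Since $\overline{\nabla}_Xe_0=X$ for $X\in TM$, the frame $(e_0,e_1)$ is parallel for the normal connection, so $E$ is flat and $K=K_N=0$. The Gauss map $G:M\to\mathcal{Q}$ is moreover automatically regular: the $e_0$-component of the second fundamental form of $M\subset\R^{1,3}$ is, up to sign, the induced metric (the second fundamental form of $\mathbb{H}^3\subset\R^{1,3}$ in the direction $e_0$ being $\pm g$), so in a parallel frame the form $\eta$ of Lemma \ref{lem1} satisfies $[\eta(X)]=\theta_1(X)J+\theta_2(X)K$ with $\Im\theta_1(X),\Im\theta_2(X)$ proportional to the two coordinates of $X$; hence $\tilde{\eta}(X)=g^{-1}[\eta(X)]g$, and so $dG(X)=2G\tilde{\eta}(X)$, can vanish only for $X=0$. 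Theorem \ref{prop etap} then furnishes a holomorphic horizontal lift $g:M\to Spin(1,3)$ with $dg\,g^{-1}=\theta_1J+\theta_2K$, $\theta_i$ holomorphic, which translates into $B:M\to SL_2(\C)$ with $B^{-1}dB$ off-diagonal holomorphic. By Proposition \ref{prop cond hypersurfaces}(3) and its converse the immersion equals $F=\langle\langle e_0\cdot\varphi,\varphi\rangle\rangle$, hence is $BB^{*}$ under the dictionary; and $|\theta|\neq|\omega|$ holds because $F=BB^{*}$ is a genuine Riemannian immersion, a direct computation showing that the induced metric of $BB^{*}$ has determinant proportional to $(|\theta|^2-|\omega|^2)^2$.

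For the direct statement, start from $B$, hence from holomorphic $\theta_1,\theta_2$ with $\theta_1J+\theta_2K\leftrightarrow\pm B^{-1}dB$; being holomorphic they are closed with $\theta_1\wedge\theta_2=0$, so $\eta':=\theta_1J+\theta_2K$ solves the Maurer--Cartan equation and integrates to $g:M\to Spin(1,3)$, unique up to right translation. To land in $\mathbb{H}^3$ one must still choose the metric and the mean curvature. Differentiating $F:=\langle\langle e_0\cdot\varphi,\varphi\rangle\rangle$ and using $\nabla_X\varphi=\eta(X)\cdot\varphi$ shows that condition (b) of Proposition \ref{prop cond hypersurfaces}(3), $dF(X)=\langle\langle X\cdot\varphi,\varphi\rangle\rangle$, is equivalent to $\Im\theta_i=\frac{1}{2}\omega_i$, where $\omega_1,\omega_2$ are the forms dual to the sought parallel orthonormal frame $(u_1,u_2)$ of $TM$; writing $\theta_i=f_idz$ and $u_i=\alpha_i$ in a conformal parameter, this determines $\alpha_1=\overline{f_2}/(2\,\Im(f_1\overline{f_2}))$ and $\alpha_2=-\overline{f_1}/(2\,\Im(f_1\overline{f_2}))$, which is meaningful precisely when $\Im(f_1\overline{f_2})\neq0$, i.e. when $|\theta|\neq|\omega|$. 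From the identity $(\alpha_1J+\alpha_2K)(f_1J+f_2K)=-ih_0\1+h_1I$ (which encodes $D\varphi=\vec{H}\cdot\varphi$) one then checks that automatically $h_0\equiv1$ and $h_1=(|f_1|^2+|f_2|^2)/(2\,\Im(f_1\overline{f_2}))\in\R$, and, crucially, that $[\alpha_1,\alpha_2]=0$, since $\omega_i=2\Im\theta_i$ is closed ($d\theta_i=0$) and $\omega_i(\alpha_j)=\delta_{ij}$. Hence the synthesis argument proving Corollary \ref{corollary2 theorem} applies: the metric making $(u_1,u_2)$ orthonormal is flat with $(u_1,u_2)$ parallel, $\varphi:=[\tilde{s},g]$ solves $D\varphi=\vec{H}\cdot\varphi$ with $\vec{H}=e_0+h_1e_1$ and $H(\varphi,\varphi)=1$, and Proposition \ref{prop cond hypersurfaces}(3) yields that $F=\int\langle\langle X\cdot\varphi,\varphi\rangle\rangle\leftrightarrow BB^{*}$ immerses $M$ into $\mathbb{H}^3$; it is flat with flat normal bundle by Proposition \ref{prop pull back}, since $G=g^{-1}Ig$ (Lemma \ref{etap2}) is holomorphic.

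The main obstacle is making the isomorphism $\HC\simeq M_2(\C)$ of the first step serve both ends at once: sending $\C J\oplus\C K$ to the off-diagonal matrices \emph{and} sending $\langle\langle e_0\cdot\varphi,\varphi\rangle\rangle$ to $BB^{*}$, which requires careful tracking of how the quaternionic conjugation $\overline{\phantom{q}}$, the coefficient conjugation $\hat{\phantom{q}}$ and the matrix adjoint correspond under the identification; this is the only genuinely delicate computation. The two remaining structural points, that the Gauss map is automatically regular and that $[\alpha_1,\alpha_2]=0$ automatically, so that a flat surface in $\mathbb{H}^3$ depends only on the two holomorphic functions $\theta,\omega$ and not on any extra real data, both reduce to the observation that the $e_0$-component of the second fundamental form is the nondegenerate induced metric, together with the fact that imaginary parts of holomorphic $1$-forms are closed. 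Everything else, namely the Maurer--Cartan equation for $\eta'$, the determinant of the metric of $BB^{*}$, and the degenerate cases $\omega\equiv0$ or $\theta\equiv0$ which correspond to horospheres, is routine.
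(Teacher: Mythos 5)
Your proof is correct, and for the converse (a simply connected flat surface in $\mathbb{H}^3$ is of the stated form) it follows essentially the same route as the paper: one takes the restriction to $M$ of the constant spinor of $\R^{1,3}$, invokes Proposition \ref{prop cond hypersurfaces}(3) to write $F=\langle\langle e_0\cdot\varphi,\varphi\rangle\rangle=i\overline{g}\hat g$ in a parallel frame adapted to $e_0$, transports this through an explicit algebra isomorphism $A:\HC\to M_2(\C)$ (with $A(\hat{\overline q})=A(q)^*$ and $H(q,q)=\det A(q)$) to obtain $F\simeq BB^*$, and reads off from Theorem \ref{prop etap} that $B^{-1}dB=-A(\theta_1J+\theta_2K)$ is off-diagonal with holomorphic entries. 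You go beyond the paper's proof in two worthwhile respects. First, you explicitly justify the regularity of the Gauss map, which Theorem \ref{prop etap} requires and which the paper leaves implicit: since the $e_0$-component of the second fundamental form of $M\subset\R^{1,3}$ is the nondegenerate induced metric, the forms $\Im\theta_1,\Im\theta_2$ are proportional to the dual coframe $\omega_1,\omega_2$, so $\eta(X)=0$ forces $X=0$. Second, the paper does not prove the direct statement at all (it cites \cite{GMM1} for the computation that $BB^*$ is flat in $\mathbb{H}^3$); you instead give a genuinely spinorial synthesis, running the machinery of Section \ref{ss recovering} in reverse: condition (b) of Proposition \ref{prop cond hypersurfaces}(3) forces $\omega_i=2\Im\theta_i$ and hence determines the flat metric, after which the Dirac-equation identity $(\alpha_1J+\alpha_2K)(f_1J+f_2K)=-ih_0\1+h_1I$ automatically yields $h_0\equiv 1$ and $h_1\in\R$, and the compatibility $[\alpha_1,\alpha_2]=0$ holds because the $\omega_i$ are closed (imaginary parts of holomorphic $1$-forms). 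This cleanly explains, from the spinorial viewpoint, why a flat surface in $\mathbb{H}^3$ is governed by two holomorphic forms with no extra real data, in contrast to the general $\R^{1,3}$ situation of Corollary \ref{corollary2 theorem}. The only point that should be written out in full in a final version is the dictionary $A$ itself and the sign/conjugation bookkeeping in passing from $q\xi\hat q^{-1}$ to $B\Xi B^*$, which you correctly flag as the one delicate computation; your summary of its properties is consistent with the paper's $A$.
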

In the theorem and below $B^*$ stands for the matrix $\overline{B}^t;$ moreover, $\mathbb{H}^3\subset\R^{1,3}$ is described as
$$\mathbb{H}^3=\{\Phi\Phi^*:\ \Phi\in SL_2(\C)\}\subset Herm(2),$$
where the Minkowski norm on the space of the $2\times 2$ hermitian matrices $Herm(2)$ is $-\det.$\begin{proof}
The fact that $F=BB^*$ defines a flat surface in $\mathbb{H}^3$ may be proved by a direct computation; see \cite{GMM1}, formula (18). We thus only prove the converse statement. Let $F:M\rightarrow\HH^3\subset\R^{1,3}$ be a flat immersion of a simply-connected surface, $E$ its normal bundle, $\vec H\in\Gamma(E)$ its mean curvature vector field and $\Sigma:=M\times\HC$ the bundle of spinors of $\R^{1,3}$ restricted to $M.$ The immersion $F$ is given by 
$$F=\int\xi,\hspace{1cm}\mbox{where}\hspace{.5cm}\xi(X)=\langle\langle X\cdot\varphi,\varphi\rangle\rangle,$$
for some spinor field $\varphi\in\Gamma(\Sigma)$ solution of $D\varphi=\vec H\cdot\varphi$ and such that $H(\varphi,\varphi)=1$ (the spinor field $\varphi$ is the restriction to $M$ of the constant spinor field $\textit{1}$ or $-\textit{1}\in\HC$ of $\R^{1,3}$, see Remark \ref{rem immersed surface}). Recalling Proposition \ref{prop cond hypersurfaces}, 3-, we have
\begin{equation}\label{F phi H3 fin}
F=\langle\langle e_0\cdot\varphi,\varphi\rangle\rangle
\end{equation}
where $e_0\in\Gamma(E)$ is the future-directed vector which is normal to $\HH^3$ in $\R^{1,3}$. We choose a parallel frame $\tilde{s}\in\Gamma(\tilde{Q})$ adapted to $e_0,$ i.e. such that $e_0$ is the first vector of $\pi(\tilde{s})\in\Gamma(Q_1\times_M Q_2):$ in $\tilde{s},$ equation (\ref{F phi H3 fin}) reads
\begin{equation}\label{F phi H3 fin bis}
F=i\ \overline{[\varphi]}\ \tch{[\varphi]},
\end{equation}
where $[\varphi]\in\HC$ represents $\varphi$ in $\tilde{s}.$
We now consider the isomorphism of algebras
\begin{eqnarray*}
A:\hspace{1cm}\HC&\rightarrow& M_2(\C)\\
q=q_0\textit{1}+q_1I+q_2J+q_3K&\mapsto&A(q)=\left(\begin{array}{cc}q_0+iq_1&q_2+iq_3\\-q_2+iq_3&q_0-iq_1\end{array}\right).
\end{eqnarray*}
We note the following properties: 
$$A(\tch{\overline{q}})=A(q)^*\hspace{1cm}\mbox{and}\hspace{1cm}H(q,q)=\det(A(q))$$
for all $q\in\HC,$ and
$$A(\R^{1,3})=iHerm(2)$$
where $\R^{1,3}=\{\xi\in\HC:\ \tch{\overline{\xi}}=-\xi\}.$ Thus, setting  $B:=A(\overline{[\varphi]}),$ $B$ belongs to  $SL_2(\C)$ (since $H(\varphi,\varphi)=1$) and $B^*=A(\tch{[\varphi]}).$ From (\ref{F phi H3 fin bis}) we thus get $F\simeq iBB^*.$ Dropping the coefficient $i,$ we get that $F$ identifies to $BB^*\in Herm(2).$ Note that $dB=A(d\overline{[\varphi]}),$ and thus that 
\begin{eqnarray*}
B^{-1}dB&=&A([\varphi]d\overline{[\varphi]})=-A(d[\varphi]\ \overline{[\varphi]})\\
&=&-A(\theta_1J+\theta_2K)=-\left(\begin{array}{cc}0&\theta_1+i\theta_2\\-\theta_1+i\theta_2&0\end{array}\right),
\end{eqnarray*}
where $\theta_1$ and $\theta_2$ are holomorphic 1-forms; see Theorem \ref{prop etap}. Finally, it is not difficult to verify that $dF$ injective reads $|\theta_1+i\theta_2|\neq |-\theta_1+i\theta_2|.$
\end{proof}
\begin{rem}
From the proof, it appears that the matrix $B$ in Theorem \ref{th GMM1} has the following interpretation: $B^{-1}$ represents the constant spinor field $\textit{1}$ or $-\textit{1}\in\HC$ of the ambient space $\R^{1,3},$ restricted to the surface, in a parallel frame adapted to the surface and to the embedding $\mathbb{H}^3\hookrightarrow\R^{1,3}.$ 
\end{rem}
\textbf{Acknowledgements}
The author is very indebted to Marie-Am\'elie Lawn and to Julien Roth for many discussions about spinors in Euclidean 4-space: the present paper would not have been possible without the preparation of the paper \cite{BLR}.

\end{document}